\newtheorem{Theorem}{Theorem}[section]
\newtheorem{Lemma}[Theorem]{Lemma}
\theoremstyle{definition}
\newtheorem{Remark}[Theorem]{Remark} }
\numberwithin{equation}{section}
\newcommand{\ZZ}{\mathbb{Z}}
\begin{document}
\allowdisplaybreaks

\renewcommand{\thefootnote}{}

\newcommand{\arXivNumber}{2410.15408}

\renewcommand{\PaperNumber}{021}

\FirstPageHeading

\ShortArticleName{Bailey Pairs and an Identity of Chern--Li--Stanton--Xue--Yee}

\ArticleName{Bailey Pairs and an Identity\\ of Chern--Li--Stanton--Xue--Yee\footnote{This paper is a~contribution to the Special Issue on Basic Hypergeometric Series Associated with Root Systems and Applications in honor of Stephen C.~Milne's 75th birthday. The~full collection is available at \href{https://www.emis.de/journals/SIGMA/Milne.html}{https://www.emis.de/journals/SIGMA/Milne.html}.
This paper is a~contribution to the Special Issue on Recent Advances in Vertex Operator Algebras in honor of James Lepowsky's 80th birthday. The~full collection is available at \href{https://www.emis.de/journals/SIGMA/Lepowsky.html}{https://www.emis.de/journals/SIGMA/Lepowsky.html}
}}

\Author{Shashank KANADE~$^{\rm a}$ and Jeremy LOVEJOY~$^{\rm b}$}

\AuthorNameForHeading{S.~Kanade and J.~Lovejoy}

\Address{$^{\rm a)}$~Department of Mathematics, University of Denver,\\
\hphantom{$^{\rm a)}$}~2390 South York Street, Denver, Colorado 80210, USA}
\EmailD{\href{mailto:shashank.kanade@du.edu}{shashank.kanade@du.edu}}
\URLaddressD{\url{https://cs.du.edu/~shakanad/}}

\Address{$^{\rm b)}$~CNRS, Universit{\'e} Paris Cit\'e, B\^atiment Sophie Germain,\\
\hphantom{$^{\rm b)}$}~Case Courier 7014, 8 Place Aur\'elie Nemours, 75205 Paris Cedex 13, France}
\EmailD{\href{mailto:lovejoy@math.cnrs.fr}{lovejoy@math.cnrs.fr}}
\URLaddressD{\url{https://lovejoy.perso.math.cnrs.fr}}

\ArticleDates{Received October 28, 2024, in final form March 19, 2025; Published online March 29, 2025}

\Abstract{We show how Bailey pairs can be used to give a simple proof of an identity of Chern, Li, Stanton, Xue, and Yee. The same method yields a number of related identities as well as false theta companions.}

\Keywords{Bailey pairs, $q$-series identities, false theta functions}

\Classification{33D15}

\begin{flushright}
\begin{minipage}{65mm}
\it Dedicated to James Lepowsky\\ on the occasion of his 80th birthday and\\ Stephen Milne on the occasion\\ of his 75th birthday
\end{minipage}
\end{flushright}

\renewcommand{\thefootnote}{\arabic{footnote}}
\setcounter{footnote}{0}

\section{Introduction}
Recall the usual $q$-series notation,
\begin{equation*}
	(a_1,a_2,\dots ,a_k)_{\infty} = (a_1,a_2,\dots ,a_k;q)_{\infty} = \prod_{j=0}^{\infty} \bigl(1-a_1q^j\bigr)\bigl(1-a_2q^j\bigr) \cdots \bigl(1-a_kq^j\bigr)
\end{equation*}
and
\begin{equation*}
	(a_1,a_2,\dots ,a_k)_n = (a_1,a_2,\dots ,a_k;q)_n = \prod_{j=0}^{n-1} \bigl(1-a_1q^j\bigr)\bigl(1-a_2q^j\bigr) \cdots \bigl(1-a_kq^j\bigr),
\end{equation*}
valid for $n \geq 0$, along with the $q$-binomial coefficient
\begin{equation} \label{eqn:qbinom}
	\begin{bmatrix} n \\ k \end{bmatrix} = \begin{bmatrix} n \\ k \end{bmatrix}_q =
	\begin{cases}
		\displaystyle\frac{(q)_n}{(q)_{n-k}(q)_k} & \text{if $0 \leq k \leq n$}, \\
		0 & \text{otherwise}.
	\end{cases}
\end{equation}
In a recent study of $q$-series and partitions related to Ariki--Koike algebras, Chern, Li, Stanton, Xue, and Yee \cite{CLSXY} established the following family of $q$-multisum identities.

\begin{Theorem} \label{CLSXYthm}
	Let $m \geq 1$ and $0 \leq a \leq m-1$. Then we have
	\begin{equation} \label{CLSXYeq}
		\sum_{n_m, \dots, n_1 \geq 0} \frac{q^{\binom{n_m + 1}{2} + \cdots + \binom{n_1+1}{2}}}{(q)_{n_m}} \prod_{i=1}^{m-1} \begin{bmatrix} n_{i+1} + \delta_{a,i} \\ n_i \end{bmatrix} = \frac{\bigl(q^{a+1},q^{m+1-a},q^{m+2};q^{m+2}\bigr)_{\infty}}{(q)_{\infty}\bigl(q;q^2\bigr)_{\infty}}.
	\end{equation}
\end{Theorem}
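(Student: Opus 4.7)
The plan is to prove the identity by iterating Bailey's lemma from an appropriate seed Bailey pair. Recall that $(\alpha_n, \beta_n)$ is a Bailey pair relative to $a$ when $\beta_n = \sum_{k=0}^n \alpha_k/((q)_{n-k}(aq)_{n+k})$, and that Bailey's lemma converts such a pair into a new one, with free parameters $\rho_1, \rho_2$: the $\alpha$-side picks up an explicit ratio of shifted factorials while the $\beta$-side convolves against a specific kernel. Iterating $m-1$ times and passing to the limit yields an identity between an $m$-fold sum (in the $\beta$-language) and a single sum (in the $\alpha$-language), the latter typically evaluated via Jacobi's triple product.

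Concretely, I would pick a seed pair $(\alpha^{(0)}, \beta^{(0)})$ and specialize the parameters at each iteration so that (i) for $i \ne a$, iteration $i$ contributes one factor $q^{\binom{n_i+1}{2}}$ and one $q$-binomial $\begin{bmatrix} n_{i+1} \\ n_i \end{bmatrix}$, and (ii) at iteration $i = a$, one invokes instead the Agarwal--Andrews--Bressoud Bailey lattice move, which shifts the base by a factor of $q$ and inserts the $+1$ in the upper index of the $q$-binomial so as to record the dependence on $a$. The presence of $q^{\binom{n+1}{2}}$ (rather than the more standard $q^{n^2}$) per level points to a specialization $\rho_1 \to \infty$ with $\rho_2$ at a specific finite value, chosen together with the base so that the $\alpha$-prefactor reduces cleanly to $q^{\binom{n+1}{2}}$; the denominator factor $(q;q^2)_\infty = (-q;q)_\infty^{-1}$ on the right-hand side is a further clue, since $(-q)_n$-type factors arise naturally from such a specialization. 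The seed is then chosen so that after $m-1$ iterations the $\alpha$-side reduces to the bilateral theta sum $\sum_{k \in \mathbb{Z}} (-1)^k q^{(m+2)\binom{k}{2} + (a+1)k}$, which by Jacobi's triple product equals $(q^{a+1}, q^{m+1-a}, q^{m+2}; q^{m+2})_\infty$, with the leftover Pochhammer prefactors giving the denominator $(q)_\infty (q;q^2)_\infty$.

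The main obstacle is this joint calibration: one must identify both the seed pair (likely drawn from Slater's list) and the precise specialization of $\rho_1, \rho_2, a$ at each step so that every factor on the LHS emerges with the correct exponents and denominators, and simultaneously so that the $\alpha$-side after $m-1$ iterations is exactly the theta sum required for the RHS. In particular, the indexing check that places the $+1$ shift at iteration $i = a$ and yields the triple $(a+1, m+1-a, m+2)$ (with the correct correspondence between $a$ and $a+1$) is the most delicate point. Once these choices are fixed, the remainder --- verifying by induction on $m$ that the $\beta$-side produces the stated multisum and closing with an application of Jacobi's triple product --- should be routine.
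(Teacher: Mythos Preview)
Your overall framework is correct and matches the paper's approach: iterate the Bailey lemma from a Slater seed, insert a special move at level $a$ to produce the $+\delta_{a,i}$ in the $q$-binomial, and close with the Jacobi triple product. The seed you are looking for is Slater's E(3), namely $\alpha_n = \frac{1-q^{2n+1}}{1-q}(-1)^n q^{n^2}$ and $\beta_n = 1/(q^2;q^2)_n$, and the specialization at each ordinary step is $b=-q$, $c\to\infty$ (relative to $x=q$), which is what produces both $q^{\binom{n+1}{2}}$ and the factor $(-q)_\infty = 1/(q;q^2)_\infty$.

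The one genuine gap is your identification of the mechanism that inserts the $+1$. The Agarwal--Andrews--Bressoud lattice takes a pair relative to $x$ to one relative to $x/q$, and its $\beta'$-formula is the \emph{same} convolution as in the ordinary Bailey lemma; it does not shift the index of $\beta$ and so cannot by itself create $\bigl[\begin{smallmatrix} n_{i+1}+1 \\ n_i \end{smallmatrix}\bigr]$. In the paper the shift comes from a different lemma of Lovejoy (Lemma~\ref{littlelemma}): from a pair relative to $x$ one passes to a pair relative to $xq$ with $\beta_n' = (1-q^{n+1})\beta_{n+1}$. It is this index shift $\beta_{n+1}$ that, after converting to $q$-binomials, produces the $+1$ in the upper entry. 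The actual sequence at level $a$ is: apply Lemma~\ref{littlelemma} (base $q\to q^2$), then one Bailey-lemma step at $x=q^2$ with $b=-q^2$, $c\to\infty$, and \emph{then} the AAB lattice with $b=c=q$ to drop the base back from $q^2$ to $q$ while leaving $\beta$ unchanged. So the lattice is used, but only to undo the base change, not to create the shift. Once you replace your single lattice move by this three-step combination, the rest of your outline (induction on the number of levels, followed by the triple product evaluation of $\sum_{n\in\mathbb{Z}}(-1)^n q^{(m+2)\binom{n}{2}+(a+1)n}$ after a short symmetrization) goes through exactly as in the paper.
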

\noindent This generalizes a classical identity in the theory of partitions \cite[equation~(2.26), $t=q$]{AndBook},
\begin{equation*}
	\sum_{n \geq 0} \frac{q^{\binom{n+1}{2}}}{(q)_n} = \frac{1}{\bigl(q;q^2\bigr)_{\infty}} = (-q)_{\infty}.
\end{equation*}
The proof of \eqref{CLSXYeq} in \cite{CLSXY} is lengthy and impressive, involving a symmetry property, a $q$-binomial coefficient multisum transformation formula, and two identities of Andrews \cite{AndParity} and Kim--Yee~\cite{KimYee}.

In the first part of this paper, we give a streamlined proof of \eqref{CLSXYeq} using the Bailey pair machinery. All that we require are the classical Bailey lemma and Bailey lattice along with a~Bailey-type lemma from \cite{Lo1} -- see Lemma \ref{littlelemma}. In fact, we establish a much more general result, which allows us to prove \eqref{CLSXYeq} and many more families of identities like it.
\begin{Theorem}
	\label{thm:main}
	If $(\alpha_n,\beta_n)$ is a Bailey pair relative to $q$ and $f_n$ is a sequence defined for all integers $n$ which satisfies
	\begin{align} \label{eqn:fdef}
		\alpha_n=\dfrac{1-q^{2n+1}}{1-q}f_n,\qquad n\geq 0
	\end{align}
	and
	\begin{align}
		q^{n}f_n = -q^{-n-1}f_{-n-1}, \qquad n\in \ZZ,
		\label{eqn:fcond}
	\end{align}
	then for all $m \geq 1$ and $0 \leq a \leq m$, we have
	\begin{gather}
			\sum_{n_m, \dots, n_1 \geq 0} \frac{q^{\binom{n_m+1}{2} + \cdots + \binom{n_1+1}{2}}\bigl(q^2;q^2\bigr)_{n_1+\delta_{a,0}}}{(q)_{n_m}} \beta_{n_1+\delta_{a,0}} \prod_{i=1}^{m-1} \begin{bmatrix} n_{i+1} + \delta_{a,i} \\ n_i \end{bmatrix}\nonumber \\
			\qquad{}= \dfrac{(-q)_\infty}{(q)_\infty}
			\sum_{n\in\ZZ}
			q^{m\binom{n}{2}+an}f_n.\label{eqn:mainid}
	\end{gather}
\end{Theorem}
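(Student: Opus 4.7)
My plan is to peel off the $m$-fold multisum on the left-hand side one layer at a time using Bailey-type machinery, and in the process to convert a one-sided sum over $n \ge 0$ into the bilateral sum over $\ZZ$ on the right-hand side.

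First I would dispatch the innermost $n_1$-layer via the Bailey-type lemma from \cite{Lo1} (Lemma \ref{littlelemma}). The characteristic factor $\bigl(q^2;q^2\bigr)_{n_1+\delta_{a,0}}\beta_{n_1+\delta_{a,0}}$ appearing in \eqref{eqn:mainid} is precisely what that lemma is designed to handle: combined with hypotheses \eqref{eqn:fdef} and \eqref{eqn:fcond}, it rewrites this sum as a bilateral sum over $\ZZ$ in the sequence $f_n$. Next I would peel off the remaining summations over $n_2,\dots,n_m$ using the classical Bailey lemma, with one strategically placed application of the Bailey lattice at step $i=a$ when $1 \le a \le m-1$, in order to produce the shift $\delta_{a,i}=1$ in the corresponding $q$-binomial coefficient. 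The boundary cases $a=0$ and $a=m$ are handled by Lemma \ref{littlelemma} itself and by the absence of any shift, respectively. Finally, the outermost $n_m$-sum collapses by Euler's identity $\sum_{n \ge 0}q^{\binom{n+1}{2}}/(q)_n = (-q)_\infty$, producing the $(-q)_\infty$ factor on the right-hand side.

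The main obstacle is the exponent bookkeeping. Each standard Bailey iteration must contribute $q^{\binom{n+1}{2}}$ on the multisum side and weight $q^{\binom{n}{2}}$ on the single-sum side, so that $m$ iterations produce the $q^{m\binom{n}{2}}$ term in the right-hand side exponent, while the single Bailey lattice application at step $a$ must insert exactly the linear factor $q^{an}$ and nothing more. In addition, one must verify that after all the iterations the asymmetric prefactor $(1-q^{2n+1})/(1-q)$ from \eqref{eqn:fdef} collapses so that, under the antisymmetry \eqref{eqn:fcond}, the one-sided sum in $\alpha_n$ matches exactly the bilateral sum $\sum_{n\in\ZZ}q^{m\binom{n}{2}+an}f_n$. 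The boundary values $a=0$ and $a=m$ in particular need to be checked to confirm that the linear exponent comes out correctly without the aid of the Bailey lattice.
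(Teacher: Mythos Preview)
Your list of ingredients (the Bailey lemma, the Bailey lattice, and Lemma~\ref{littlelemma}) matches the paper's, but you have the roles of Lemma~\ref{littlelemma} and the Bailey lattice swapped, and consequently the order of operations is off. This is not merely cosmetic; the argument does not go through as you describe it.

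It is Lemma~\ref{littlelemma} that produces the shift: it sends $\beta_n\mapsto(1-q^{n+1})\beta_{n+1}$, and this is precisely what inserts the $+\delta_{a,i}$ into one of the $q$-binomials. But Lemma~\ref{littlelemma} also moves the pair from relative to $q$ to relative to $q^2$, so one cannot simply resume iterating the Bailey lemma relative to $q$. The Bailey lattice is then invoked, not to create any shift (on the $\beta$-side, with the choice $b=c=q$ it leaves $\beta_n$ unchanged), but to bring the pair back from $x=q^2$ to $x=q$. Thus the actual order is: (i)~$a$ iterations of Lemma~\ref{Baileylemma} with $x=q$, $b=-q$, $c\to\infty$; (ii)~Lemma~\ref{littlelemma}, creating the shift at level $a$ and passing to $x=q^2$; (iii)~one iteration of Lemma~\ref{Baileylemma} with $x=q^2$, $b=-q^2$, $c\to\infty$; (iv)~Lemma~\ref{Baileylatticelemma} with $x=q^2$, $b=c=q$, returning to $x=q$ with $\beta$ untouched; (v)~$m-a-1$ further iterations of Lemma~\ref{Baileylemma}. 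In particular, Lemma~\ref{littlelemma} sits at the innermost layer only when $a=0$.

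Two smaller corrections. The factor $\bigl(q^2;q^2\bigr)_{n_1+\delta_{a,0}}$ is not what Lemma~\ref{littlelemma} produces; it arises when the $(-q)_{n_1}$ coming from the $b=-q$ specialisation in step~(i) is combined with the $(q)_{n_1}$ that appears upon rewriting the nested sum in $q$-binomial form. And the outermost step is not Euler's identity on a free $n_m$-sum but the Bailey limit~\eqref{Baileylimit} applied to the assembled pair relative to $q$; the prefactor $(-q)_\infty/(q)_\infty$ emerges from the $1/(-q)_{n_{m+1}}$ built into $\beta'$ together with the $1/(1-q)$ built into $\alpha'$. Your description of the final bilateralisation via~\eqref{eqn:fcond} is correct, but it occurs only at this last stage, after all of the Bailey machinery has been run.
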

Note that using \eqref{eqn:fcond}, the condition \eqref{eqn:fdef} could be written alternatively as:
\begin{align*}
	\alpha_n=\dfrac{f_n+f_{-n-1}}{1-q},\qquad n\geq 0.
\end{align*}
Theorem \ref{CLSXYthm} now follows using the classical Bailey pair relative to $q$ \cite[E(3)]{Sl1},
\begin{equation} \label{seed1alpha}
	\alpha_n = \frac{1-q^{2n+1}}{1-q}(-1)^nq^{n^2}
\end{equation}
and
\begin{equation} \label{seed1beta}
	\beta_n = \frac{1}{\bigl(q^2;q^2\bigr)_n},
\end{equation}
together with the triple product identity \eqref{eqn:jtp}. Other Bailey pairs lead to similar families of identities, and we give a number of examples in Section \ref{sec:mainthm} -- see Theorems \ref{q^2thm}, \ref{strangesignthm}, and \ref{thm:quintuple}.

In the second part of the paper, we prove a result similar to Theorem \ref{thm:main} but involving false theta functions instead of theta functions. To state it, we define the function
\begin{equation*}
	{\rm sgn}(n) =
	\begin{cases}
		\hphantom{-}1 &\text{if $n \geq 0$}, \\
		-1 &\text{if $n < 0$}.
	\end{cases}
\end{equation*}

\begin{Theorem}
	\label{thm:false}
	If $(\alpha_n,\beta_n)$ is a Bailey pair relative to $q$ and $f_n$ is as in \eqref{eqn:fdef} and \eqref{eqn:fcond}, then for all $m\geq 1$ and $0\leq a \leq m$, we have
	\begin{gather}
			\sum_{n_m, \dots, n_1 \geq 0} \frac{(-1)^{n_m}q^{\binom{n_m+1}{2} + \cdots + \binom{n_1+1}{2}}\bigl(q^2;q^2\bigr)_{n_1+\delta_{a,0}}}{(-q)_{n_m}} \beta_{n_1+\delta_{a,0}} \prod_{i=1}^{m-1} \begin{bmatrix} n_{i+1} + \delta_{a,i} \\ n_i \end{bmatrix}
			\nonumber\\
			\qquad{}=
			\begin{cases}
				\displaystyle\sum_{n\in\ZZ}
				{\rm sgn}(-n)(-1)^nq^{m\binom{n}{2}+an}f_n & \text{if $0 \leq a \leq m-1$}, \vspace{1mm}\\
				\displaystyle\sum_{n \in \mathbb{Z}} {\rm sgn}(n)(-1)^n q^{m\binom{n+1}{2}}f_n & \text{if $a=m$}.
			\end{cases}\label{eqn:falseid}
	\end{gather}
\end{Theorem}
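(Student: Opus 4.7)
My plan is to mirror the proof of Theorem~\ref{thm:main} step by step, changing only the outermost Bailey step so that it emits a false theta function rather than a theta function.

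For the inner $m-1$ steps I would apply exactly the same sequence of classical Bailey lemma / Bailey lattice operations as in the proof of Theorem~\ref{thm:main}, starting from the pair $(\alpha_n,\beta_n)$. This builds up the nested sum $\sum_{n_{m-1},\dots,n_1\geq 0}$ with the $q$-binomial product, the exponent $\binom{n_{m-1}+1}{2}+\cdots+\binom{n_1+1}{2}$, and the factor $\bigl(q^2;q^2\bigr)_{n_1+\delta_{a,0}}\beta_{n_1+\delta_{a,0}}$ on the left, while on the right it delivers a new Bailey pair $\bigl(\alpha_n^{(m-1)},\beta_n^{(m-1)}\bigr)$ whose $\alpha$-side packages the summand $q^{m\binom{n}{2}+an}\alpha_n$ when $0\leq a\leq m-1$, and $q^{m\binom{n+1}{2}}\alpha_n$ when $a=m$. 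The split into the two cases of \eqref{eqn:falseid} arises here: it corresponds to whether the Bailey lattice is applied at the last inner step (used precisely when $a\leq m-1$) or omitted (when $a=m$), and therefore to the base at which the outer step is then performed.

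For the outermost step I would \emph{not} use the classical Bailey lemma (which in the proof of Theorem~\ref{thm:main} contributes $1/(q)_{n_m}$ and extracts the prefactor $(-q)_\infty/(q)_\infty$ against a theta function), but rather the limiting specialization $\rho_1=-1$, $\rho_2\to\infty$ of Bailey's general transformation, applied to $\bigl(\alpha_n^{(m-1)},\beta_n^{(m-1)}\bigr)$. This specialization replaces $(q)_{n_m}$ by $(-q)_{n_m}$, inserts the sign $(-1)^{n_m}$, kills the $(-q)_\infty/(q)_\infty$ prefactor, and leaves on the right a one-sided sum $\sum_{n\geq 0}(-1)^n(\text{power of }q)\alpha_n$ of exactly the shape required by \eqref{eqn:falseid}. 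Substituting \eqref{eqn:fdef} rewrites $\alpha_n$ as $(1-q^{2n+1})f_n/(1-q)$, and the reflection \eqref{eqn:fcond} turns the term $-q^{2n+1}f_n$ into $f_{-n-1}$, which is exactly the device needed to splice the half-range sum $n\geq 0$ into a full sum over $\mathbb{Z}$ weighted by $\operatorname{sgn}$.

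The principal obstacle is the bookkeeping that produces $\operatorname{sgn}(-n)$ in the generic case and $\operatorname{sgn}(n)$ in the boundary case $a=m$: in both cases the one-sided sum is spliced by \eqref{eqn:fcond}, but the sign of $f_{-n-1}$ relative to $f_n$ interacts with the parity of the reindexing shift $n\mapsto -n-1$ and with the extra factor $q^{mn}$ (which turns $m\binom{n}{2}+mn$ into $m\binom{n+1}{2}$), flipping the sgn direction exactly when the Bailey lattice is omitted at the last inner step. Verifying this compatibility -- i.e.\ that the outer false-theta Bailey specialization coupled with \eqref{eqn:fdef}--\eqref{eqn:fcond} yields precisely the two-case right-hand side of \eqref{eqn:falseid} -- is the only nontrivial point; the rest is a direct transcription of the argument for Theorem~\ref{thm:main}.
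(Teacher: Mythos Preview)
Your strategy is essentially what the paper does for $a=m$ and for $0\le a\le m-2$: in those cases the construction of the Bailey pair in Lemma~\ref{lemma:mainpair} ends with an iteration of Lemma~\ref{Baileylemma} at $x=q$ with $b=-q$, $c\to\infty$, and one simply replaces that final iteration by the choice $b=q$, $c\to\infty$. (Your parameter ``$\rho_1=-1$'' does not give the right shape; the correct specialization is $b=q$, which is what produces $(-1)^{n_m}(q)_{n_m}/(-q)_{n_m}$ on the $\beta$-side and $(-1)^n$ on the $\alpha$-side.) After that, the splicing via \eqref{eqn:fcond} works exactly as you describe.

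The genuine gap in your outline is the case $a=m-1$. Look back at the proof of Lemma~\ref{lemma:mainpair}: when $a=m-1$, Step~5 is empty, so there is \emph{no} outermost Bailey-lemma iteration to modify. The last operations are Step~2 (Lemma~\ref{littlelemma}), Step~3 (Lemma~\ref{Baileylemma} at $x=q^2$), and Step~4 (the lattice, which leaves $\beta$ unchanged). Naively swapping $b=-q^2$ for $b=q^2$ in Step~3 does not yield the correct $\beta$: one is left with an unwanted factor $(1-q^{n_m+1})/(1+q^{n_m+1})$ that does not cancel. The paper handles this by replacing Lemma~\ref{littlelemma} in Step~2 with a new variant (Lemma~\ref{falselemma3}) in which $\beta_n'=(1+q^{n+1})\beta_{n+1}$ rather than $(1-q^{n+1})\beta_{n+1}$; establishing that variant requires two auxiliary lemmas (Lemmas~\ref{falselemma1} and~\ref{falselemma2}) that subtract off the unit Bailey pair to remove the hypothesis $\alpha_0=\beta_0=0$. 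This is the actual nontrivial point of the proof, not the ${\rm sgn}$ bookkeeping, and your dichotomy ``lattice used ($a\le m-1$) versus omitted ($a=m$)'' does not detect it.
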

Each family of identities arising from Theorem \ref{thm:main} then has a false theta counterpart using Theorem \ref{thm:false}. In the case of \eqref{CLSXYeq}, using \eqref{seed1alpha} and \eqref{seed1beta} this is the following.
\begin{Theorem}\label{thm:falseCLSXY}
	For $m \geq 1$ and $0 \leq a \leq m-1$, we have
	\begin{gather*}
			\sum_{n_m, \dots, n_1 \geq 0} \frac{(-1)^{n_m}q^{\binom{n_m + 1}{2} + \cdots + \binom{n_1+1}{2}}}{(-q)_{n_m}} \prod_{i=1}^{m-1} \begin{bmatrix} n_{i+1} + \delta_{a,i} \\ n_i \end{bmatrix}
		 = \sum_{n \in \mathbb{Z}} {\rm sgn}(-n)q^{(m+2)\binom{n}{2} + (a+1)n}.
 \end{gather*}
\end{Theorem}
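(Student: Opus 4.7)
The plan is to deduce Theorem \ref{thm:falseCLSXY} as a direct specialization of Theorem \ref{thm:false}, using the same Bailey pair \eqref{seed1alpha}--\eqref{seed1beta} that already yielded Theorem \ref{CLSXYthm} from Theorem \ref{thm:main}. Since the hypothesis on $a$ here is $0 \leq a \leq m-1$, I will be in the first case of \eqref{eqn:falseid}.

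First I would read off $f_n$ from \eqref{eqn:fdef}: comparing $\alpha_n = \frac{1-q^{2n+1}}{1-q}(-1)^n q^{n^2}$ with the defining relation gives $f_n = (-1)^n q^{n^2}$ for $n \geq 0$. I would then check that the formula $f_n = (-1)^n q^{n^2}$ for all $n \in \ZZ$ satisfies the functional equation \eqref{eqn:fcond}: indeed $q^n f_n = (-1)^n q^{n^2+n}$, while $-q^{-n-1} f_{-n-1} = -q^{-n-1}(-1)^{n+1} q^{(n+1)^2} = (-1)^n q^{n^2+n}$, and these agree. So the extended $f_n$ satisfies both hypotheses of Theorem \ref{thm:false}.

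Next, I would simplify both sides of \eqref{eqn:falseid}. On the left, because $\beta_n = 1/(q^2;q^2)_n$, the factor $(q^2;q^2)_{n_1+\delta_{a,0}} \beta_{n_1+\delta_{a,0}}$ collapses to $1$, giving exactly the multisum in Theorem \ref{thm:falseCLSXY}. On the right, for $0 \leq a \leq m-1$ Theorem \ref{thm:false} yields $\sum_{n \in \ZZ} \mathrm{sgn}(-n)(-1)^n q^{m\binom{n}{2} + an} f_n$; substituting $f_n = (-1)^n q^{n^2}$ kills the sign $(-1)^n$, and a routine completion of the square rewrites the exponent as
\begin{equation*}
m\binom{n}{2} + an + n^2 = (m+2)\binom{n}{2} + (a+1)n,
\end{equation*}
which matches the right-hand side of Theorem \ref{thm:falseCLSXY}.

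There is no real obstacle here: once Theorem \ref{thm:false} is established, the proof is a mechanical substitution together with one line of arithmetic in the exponent. The only thing to be careful about is to stay in the correct branch of \eqref{eqn:falseid}, which is ensured by the hypothesis $a \leq m-1$.
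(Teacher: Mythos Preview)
Your proof is correct and matches the paper's approach exactly: the paper also deduces Theorem~\ref{thm:falseCLSXY} by inserting the Bailey pair \eqref{seed1alpha}--\eqref{seed1beta} (with $f_n=(-1)^n q^{n^2}$) into Theorem~\ref{thm:false} and simplifying. Your verification of \eqref{eqn:fcond} and the exponent rewriting $m\binom{n}{2}+an+n^2=(m+2)\binom{n}{2}+(a+1)n$ are precisely the computations needed.
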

The base case $m=1$ and $a=0$ is the well-known
\begin{equation*}
	\sum_{n \geq 0} \frac{(-1)^nq^{\binom{n+1}{2}}}{(-q)_n} = \sum_{n \geq 0} q^{n(3n+1)/2}\bigl(1-q^{2n+1}\bigr).
\end{equation*}	
See Theorems \ref{thm:falseq^2}--\ref{thm:falsequintuple} for further examples.

In the final part of the paper we are motivated by the series $\mathscr{S}_{m,a}$, defined for $m \geq 1$ and~${0 \leq a \leq m}$ by
\begin{align*}
	\mathscr{S}_{m,a} = \sum_{n_m, \dots, n_1 \geq 0} \frac{q^{n_m^2 + \cdots + n_1^2}}{\bigl(q^2;q^2\bigr)_{n_m}} \prod_{i=1}^{m-1} \begin{bmatrix} n_{i+1} + \delta_{a,i} \\ n_i \end{bmatrix}_{q^2}.
\end{align*}
These are naturally dilated versions of the series in \eqref{CLSXYeq}, and it is known that we have
\begin{align} \label{eqn:Smm}
	\mathscr{S}_{m,0}=\mathscr{S}_{m,m} = \dfrac{\bigl(-q;q^2\bigr)_\infty}{\bigl(q^2;q^2\bigr)_\infty}\bigl(q^{m+1},q^{m+3},q^{2m+4}; q^{2m+4}\bigr)_\infty.
\end{align}
See \cite[Corollary 1.5\,(b)]{St} for general $m$ and \cite[Section 5]{BIS} for $m$ even. While it appears that the~$\mathscr{S}_{m,a}$ are not infinite products for $a\not \in \{0,m\}$, we prove the following result on the differences of these series.

\begin{Theorem}
	\label{thm:dilationex1}
	For $1\leq a\leq m$, we have
	\begin{align*}
		\mathscr{S}_{m,a}-\mathscr{S}_{m,a-1}=q^{a}\dfrac{\bigl(-q;q^2\bigr)_\infty}{\bigl(q^2;q^2\bigr)_\infty}\bigl(q^{m+1-2a},q^{m+3+2a},q^{2m+4};q^{2m+4}\bigr)_\infty.
	\end{align*}
\end{Theorem}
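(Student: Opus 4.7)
The plan is to reduce the difference $\mathscr{S}_{m,a} - \mathscr{S}_{m,a-1}$ to a single theta sum and then apply Jacobi's triple product identity. I would first use the $q$-Pascal rule
\begin{align*}
	\begin{bmatrix} n+1 \\ k \end{bmatrix}_{q^2} = \begin{bmatrix} n \\ k \end{bmatrix}_{q^2} + q^{2(n-k+1)}\begin{bmatrix} n \\ k-1 \end{bmatrix}_{q^2}
\end{align*}
on the shifted $q$-binomial in $\mathscr{S}_{m,b}$ (for $b\in\{a-1,a\}$), together with the summation-variable shift $n_b\to n_b+1$ in the correction term. This moves the shift one step to the left at the cost of an extra factor $q\cdot q^{2n_{b+1}}$. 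Iterating this peeling until the shift disappears (using \eqref{eqn:Smm} to terminate), I would obtain a decomposition
\begin{align*}
	\mathscr{S}_{m,b} = \sum_{k=0}^{b} q^k \mathscr{T}^{(b)}_k,
\end{align*}
where $\mathscr{T}^{(b)}_k$ is the $\mathscr{S}_{m,m}$-type unshifted multisum equipped with the additional weight $q^{2(n_{b-k+2}+\cdots+n_{b+1})}$.

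Next, I would evaluate each $\mathscr{T}^{(b)}_k$ by a Bailey-pair argument in the spirit of the proof of Theorem~\ref{thm:main}. Iterating Bailey's lemma in base $q^2$ starting from the dilated version of the seed Bailey pair \eqref{seed1alpha}, \eqref{seed1beta}, and absorbing each $q^{2n_j}$-weight as a parameter shift at level $j$ of the chain, each weighted multisum collapses to a single theta sum
\begin{align*}
	\mathscr{T}^{(b)}_k = \frac{(-q;q^2)_\infty}{(q^2;q^2)_\infty}\sum_{n\in\ZZ}(-1)^n q^{(m+2)n^2 + L^{(b)}_k\cdot n},
\end{align*}
where the linear coefficient $L^{(b)}_k$ is determined explicitly by the set of positions $\{b-k+2,\ldots,b+1\}$ through Bailey's lemma.

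Substituting into the difference and performing the theta-sum arithmetic, pairwise cancellations among the $L^{(b)}_k$'s for $b=a$ and $b=a-1$ collapse everything to a single theta sum
\begin{align*}
	\mathscr{S}_{m,a}-\mathscr{S}_{m,a-1} = \frac{(-q;q^2)_\infty}{(q^2;q^2)_\infty}\sum_{n\in\ZZ}(-1)^n q^{(m+2)n^2 - (2a+1)n + a}.
\end{align*}
The Jacobi triple product identity then rewrites this as $q^a\,\frac{(-q;q^2)_\infty}{(q^2;q^2)_\infty}\bigl(q^{m+1-2a},q^{m+3+2a},q^{2m+4};q^{2m+4}\bigr)_\infty$, yielding the claim.

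The main obstacle is correctly tracking the linear coefficients $L^{(b)}_k$ through the iterated Bailey evaluation in the presence of multiple $q^{2n_j}$-weights, and then verifying the large-scale cancellation that collapses the many theta sums to a single clean one. This is a Bailey-lattice style bookkeeping: the key quantitative fact to verify is that each $q^{2n_j}$-weight contributes a fixed increment to $L^{(b)}_k$ independent of the other weights present, so that the $b=a$ and $b=a-1$ pieces pair up cleanly. I expect the Bailey-type lemma from \cite{Lo1} (Lemma~\ref{littlelemma}) to play a central role in effecting this collapse.
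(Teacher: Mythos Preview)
Your $q$-Pascal decomposition $\mathscr{S}_{m,b}=\sum_{k=0}^{b}q^{k}\mathscr{T}^{(b)}_{k}$ is correct, but the mechanism you propose for the collapse fails. The ``key quantitative fact'' you intend to verify --- that a weight $q^{2n_{j}}$ contributes a fixed increment to the linear coefficient $L^{(b)}_{k}$ independent of the position $j$ --- is false. For $m=3$ one checks directly that
\[
\mathscr{T}[\{2\}]-\mathscr{T}[\{3\}]=q+O\bigl(q^{2}\bigr)\neq 0,
\]
so $\mathscr{T}^{(2)}_{1}\neq\mathscr{T}^{(1)}_{1}$ and no pairwise cancellation of the $q^{k}$-terms with $k<a$ occurs. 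In fact $\mathscr{T}[\{j\}]$ with a single weight in the interior is generally \emph{not} a single theta sum with the prefactor $(-q;q^{2})_{\infty}/(q^{2};q^{2})_{\infty}$: iterating the Bailey lemma alone (Lemma~\ref{Baileylemma}) cannot insert an isolated $q^{2n_{j}}$ while keeping $\alpha_{n}$ a monomial times the seed; one is forced through the lattice step, which introduces a two-term $\alpha_{n}$ involving $g_{n}$ and $g_{n\pm 1}$. So both your evaluation of the individual $\mathscr{T}^{(b)}_{k}$ and the subsequent cancellation break down.

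The paper avoids this entirely. Rather than decompose $\mathscr{S}_{m,a}$ and $\mathscr{S}_{m,a-1}$ separately, it builds a single Bailey pair relative to $1$ whose $\beta$-side is the $\mathscr{S}_{m,a}$-type multisum (Lemma~\ref{lemma:dilationpair}), using Lemma~\ref{littlelemma} once together with the lattice Lemma~\ref{Baileylatticelemma} at the $a$-th step and ordinary Bailey iterations elsewhere. The resulting $\alpha_{n}$ carries denominators $1-q^{n\pm 1/2}$ and mixes $g_{n},g_{n\pm 1}$, confirming that $\mathscr{S}_{m,a}$ itself is not a single theta sum for $0<a<m$. The crucial point is that subtracting the $a-1$ pair from the $a$ pair \emph{on the $\alpha$-side} cancels the $g_{n}$ terms (and their denominators) exactly, leaving $-q^{\frac{m}{2}n^{2}+n}\bigl(q^{n(a-2)+\frac{a-1}{2}}g_{n+1}+q^{-n(a-1)+\frac{a-3}{2}}g_{n-1}\bigr)$; the symmetry $g_{-n}=q^{n}g_{n}$ then folds this into one bilateral sum. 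So the cancellation you are looking for happens algebraically before taking the Bailey limit, not afterwards among competing theta sums.
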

This and several similar families of identities will follow as special cases of our third main theorem.
\begin{Theorem} \label{thm:dilation}
	Suppose that $(\alpha_n,\beta_n)$ is a Bailey pair relative to $1$ and that there is a sequence~$g_n$, defined for all integers $n$, with $g_0 = 1$,
	\begin{equation}
		\alpha_n =
		\begin{cases}
			1 &\text{if $n= 0$}, \\
			(1+q^n)g_n &\text{if $n > 1$},
		\end{cases}
		\label{eqn:gdef}
	\end{equation}
	and
	\begin{equation} \label{eqn:gcond}
		g_{-n} = q^ng_n.
	\end{equation}
	Then for all $m \geq 0$ and $1 \leq a \leq m$, we have
	\begin{gather}
			\sum_{n_{m},\dots,n_1\geq 0}
			\frac{q^{\frac{1}{2}(n_{m}^2+n_{m-1}^2+\cdots+n_1^2)}\bigl(-q^{\frac{1}{2}},q\bigr)_{n_1}}{(q)_{n_m}}\beta_{n_1}
			\prod_{i=1}^{m-1} \begin{bmatrix} n_{i+1} + \delta_{a,i} \\ n_i \end{bmatrix} \nonumber\\
			\qquad\quad{} - \sum_{n_{m},\dots,n_1\geq 0}
			\frac{q^{\frac{1}{2}(n_{m}^2+n_{m-1}^2+\cdots+n_1^2)}\bigl(-q^{\frac{1}{2}},q\bigr)_{n_1 + \delta_{a-1,0}}}{(q)_{n_m}}\beta_{n_1 + \delta_{a-1,0}}
			\prod_{i=1}^{m-1} \begin{bmatrix} n_{i+1} + \delta_{a-1,i} \\ n_i \end{bmatrix} \nonumber\\
			\qquad{} = -q^{\frac{a-1}{2}} \frac{\bigl(-q^{\frac{1}{2}}\bigr)_{\infty}}{(q)_{\infty}} \sum_{n \in \mathbb{Z}} q^{\frac{m}{2}n^2+na-n}g_{n+1}.\label{eqn:dilationid}
	\end{gather}
\end{Theorem}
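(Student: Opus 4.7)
The plan is to adapt the Bailey-pair strategy used in the proofs of Theorems \ref{thm:main} and \ref{thm:false}. Each multisum on the left of \eqref{eqn:dilationid} will be reduced to a single sum by repeated application of the Bailey lemma, and the resulting difference will then be reassembled into a bilateral sum using the symmetry condition \eqref{eqn:gcond}. Three features distinguish the present case: the underlying Bailey pair is relative to $1$ rather than $q$; the innermost weight involves the half-integer factor $\bigl(-q^{1/2},q\bigr)_{n_1}$, so the Bailey lemma must be applied with parameters $\rho_1=-q^{1/2}$ and $\rho_2\to\infty$ (producing a $q^{k^2/2}$ weight and a $(-q^{1/2})_k$ factor per iteration); and the final identity is a difference of two single-sum expressions rather than a single equation.

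The iterative scheme mirrors the proof of Theorem \ref{thm:main}. Starting from the pair $(\alpha_n,\beta_n)$, one applies $m-1$ rounds of the specialized Bailey lemma, inserting a Bailey lattice move at position $i=a$ (respectively $i=a-1$ for the second multisum) to generate the $\delta_{a,i}$ shift in the $q$-binomial coefficient. A final application of Lemma \ref{littlelemma} to the outermost sum with $(q)_{n_m}$ in the denominator collapses the $m$-fold sum to a unilateral sum in a single index $k$ involving $\alpha_k$. Carrying out the same procedure for the second multisum, with the additional $\delta_{a-1,0}$ shift on $\beta$, yields an analogous unilateral expression.

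Let $S_{m,a}$ and $T_{m,a-1}$ denote the two resulting unilateral $\alpha$-sums, so that $S_{m,a}-T_{m,a-1}$ is the left-hand side of \eqref{eqn:dilationid}. Substituting \eqref{eqn:gdef} to replace $\alpha_k$ by $(1+q^k)g_k$ for $k\geq 1$ and treating $\alpha_0=1$ separately, one expects common terms to cancel and the remainder to reassemble, via $g_{-n}=q^n g_n$, into the bilateral theta-like sum $\sum_{n\in\ZZ} q^{\frac{m}{2}n^2+na-n}g_{n+1}$ appearing on the right. The shift from $g_n$ to $g_{n+1}$ in the final answer reflects the special role of $\alpha_0=1$ (as opposed to $(1+q^0)g_0=2$) in \eqref{eqn:gdef}, and the prefactor $-q^{(a-1)/2}\bigl(-q^{1/2}\bigr)_\infty/(q)_\infty$ should emerge naturally from the collected Pochhammer factors produced by the $m-1$ Bailey iterations.

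The main technical obstacle will be the bookkeeping. Tracking the precise powers of $q$ and the Pochhammer factors through the $m-1$ Bailey iterations, the lattice insertion, and the final application of Lemma \ref{littlelemma} is what ensures the exponent $\frac{m}{2}n^2+na-n$ and the prefactor $-q^{(a-1)/2}\bigl(-q^{1/2}\bigr)_\infty/(q)_\infty$ come out correctly. The differencing step is also delicate: the contributions of $g_k$ from $S_{m,a}$ and from $T_{m,a-1}$ must fit together so that, after invoking $g_{-n}=q^n g_n$, the extraneous boundary terms at $k=0$ disappear and the clean bilateral form with the $n\mapsto n+1$ index shift emerges.
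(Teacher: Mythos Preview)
Your overall strategy---iterate the Bailey lemma with $b=-q^{1/2}$, $c\to\infty$, insert a shift at the $a$th position, take the limit, subtract the two resulting expressions, and reassemble into a bilateral sum via \eqref{eqn:gcond}---is exactly the route the paper takes. But as written this is an outline, not a proof, and two of the mechanisms are misidentified in a way that would cause the computation to fail if you tried to carry it out.

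First, the $\delta_{a,i}$ shift is not produced by the Bailey lattice move. It is produced by Lemma~\ref{littlelemma}, whose effect on $\beta$ is $\beta_n'=(1-q^{n+1})\beta_{n+1}$; this is what turns $\bigl[\begin{smallmatrix} n_{i+1}\\ n_i\end{smallmatrix}\bigr]$ into $\bigl[\begin{smallmatrix} n_{i+1}+1\\ n_i\end{smallmatrix}\bigr]$ after one more iteration of Lemma~\ref{Baileylemma}. The Bailey lattice (Lemma~\ref{Baileylatticelemma}) serves a different purpose: Lemma~\ref{littlelemma} moves the pair from relative to $1$ to relative to $q$, and the lattice step (with $x\mapsto q$, $b,c\mapsto q^{1/2}$) is needed to bring it back to relative to $1$ so that the remaining $m-a-1$ iterations can proceed. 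In the paper the order at the insertion point is: Lemma~\ref{littlelemma}, then one Bailey-lemma step (with $x=q$, $b=-q^{3/2}$), then Lemma~\ref{Baileylatticelemma}. Your description has the roles of \ref{littlelemma} and \ref{Baileylatticelemma} swapped.

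Second, Lemma~\ref{littlelemma} does not ``collapse the $m$-fold sum to a unilateral sum''; it produces another Bailey pair. The reduction to a single sum over $\alpha_k$ comes from letting the outer index tend to infinity in the Bailey-pair relation, i.e.\ from \eqref{Baileylimit}. This is also why the prefactor $\bigl(-q^{1/2}\bigr)_\infty/(q)_\infty$ appears: the final $\beta$ carries a $1/\bigl(-q^{1/2}\bigr)_{n_{m+1}}$ and the limit \eqref{Baileylimit} contributes $1/(q,q)_\infty$.

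Finally, the phrases ``one expects common terms to cancel'' and ``should emerge naturally'' are placeholders for the actual work. In the paper this is done by first writing down the explicit $\alpha_n'$ for general $a$ (Lemma~\ref{lemma:dilationpair}), then subtracting the case $a-1$ from the case $a$ at the level of Bailey pairs. The subtraction collapses the complicated $\alpha_n'$ to the clean form $-q^{\frac{m}{2}n^2+n}\bigl(q^{n(a-2)+\frac{a-1}{2}}g_{n+1}+q^{-n(a-1)+\frac{a-3}{2}}g_{n-1}\bigr)$, and only then does \eqref{eqn:gcond} fold the two pieces into the bilateral sum with the $g_{n+1}$ shift. Without that explicit intermediate $\alpha_n'$ you cannot verify the exponent $\frac{m}{2}n^2+an-n$ or the prefactor $-q^{(a-1)/2}$.
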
	
Note that with \eqref{eqn:gcond}, the condition \eqref{eqn:gdef} could be written alternatively as
\begin{equation*}
	\alpha_n =
	\begin{cases}
		1 &\text{if $n= 0$}, \\
		g_n+g_{-n} &\text{if $n > 1$}.
	\end{cases}
\end{equation*}

The paper is organized as follows. In the next section, we collect some basic facts about Bailey pairs. In Section \ref{sec:mainthm}, we prove Theorem \ref{thm:main} and give several applications, including Theorem \ref{CLSXYthm}. In Section \ref{sec:falsetheta}, we prove Theorem \ref{thm:false} and give false theta companions for each family of identities in Section \ref{sec:mainthm}. In Section \ref{sec:dilated}, we prove Theorem \ref{thm:dilation}, which allows us to deduce Theorem \ref{thm:dilationex1} and other similar identities.
Wherever possible, we include historical remarks about the ``base cases'' of our identities.

While it may be striking that identities involving products of $q$-binomial coefficients
\begin{equation}
	\prod_{i=1}^{m-1} \begin{bmatrix} n_{i+1} + \delta_{a,i} \\ n_i \end{bmatrix}
	\label{eqn:shiftedbinom}
\end{equation}
are so widespread, the fact that the Bailey pair machinery can be used to prove such identities should not be a surprise. This is the most powerful and systematic technique for treating $q$-multisum identities, and a number of identities with products of $q$-binomial coefficients like~\eqref{eqn:shiftedbinom} have already appeared in the literature in connection to Bailey pairs. We discuss these briefly in the concluding remarks at the end of the paper.

\section{Bailey pairs}
In this section, we review the necessary background on Bailey pairs. A Bailey pair relative to~$x$~\cite{Andmultiple} is a pair of sequences $(\alpha_n,\beta_n)$,
$n\in \ZZ_{\geq 0}$,
satisfying
\begin{equation*} 
	\beta_n = \sum_{k=0}^n \frac{\alpha_k}{(q)_{n-k}(xq)_{n+k}}.
\end{equation*}
Note that in the limit, this gives (subject to convergence conditions)
\begin{equation} \label{Baileylimit}
	\lim_{n \to \infty} \beta_n = \frac{1}{(q,xq)_{\infty}} \sum_{k=0}^{\infty} \alpha_k.
\end{equation}
In practice, the above sum often becomes an infinite product by an appeal to the Jacobi triple product identity \cite[equation~(2.2.10)]{AndBook},
\begin{align}
	\sum_{n\in\ZZ}(-1)^n z^nq^{n^2}=\bigl(q^2,zq,z^{-1}q;q^2\bigr)_\infty,
	\label{eqn:jtp}
\end{align}
or the quintuple product identity \cite{C-quint},
\begin{align}
	\Biggl(\sum_{\substack{n\in\ZZ\\ n\equiv 0\,\,(\mathrm{mod}\,\,3)}}
	-\sum_{\substack{n\in\ZZ\\ n\equiv 2\,\,(\mathrm{mod}\,\,3)}}
	\Biggr) z^nq^{\frac{1}{3}\binom{n+1}{2}}=\bigl(q,zq,z^{-1}\bigr)_\infty\bigl(z^2q,z^{-2}q;q^2\bigr)_\infty
	\label{eqn:qtp}.
\end{align}

The most important fact about Bailey pairs is the following, which is known as the Bailey lemma. From a given Bailey pair relative to $x$, it produces new Bailey pairs relative to $x$.

\begin{Lemma}[\cite{Andmultiple}] \label{Baileylemma}
	If $(\alpha_n,\beta_n)$ is a Bailey pair relative to $x$, then so is $(\alpha_n',\beta_n')$, where
	\begin{equation*} 
		\alpha_n' = \frac{(b,c)_n(xq/bc)^n}{(xq/b,xq/c)_n}\alpha_n
	\qquad
	\text{and}
	\qquad 
		\beta_n' = \frac{1}{(xq/b,xq/c)_n} \sum_{j=0}^n \frac{(b,c)_j(xq/bc)_{n-j} (xq/bc)^j}{(q)_{n-j}} \beta_j.
	\end{equation*}
\end{Lemma}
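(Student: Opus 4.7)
The plan is to parallel the strategy used for Theorem~\ref{thm:main}: iterate the Bailey lemma on the given pair relative to $1$, use a Bailey-lattice step at an appropriate rung to produce the shifted $q$-binomial $\begin{bmatrix}n_{i+1}+\delta_{a,i}\\n_i\end{bmatrix}$, and use the symmetry of $g_n$ to extend a half-line sum in $\alpha_n$ to a bilateral sum over $\ZZ$.

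First, I would apply Lemma~\ref{Baileylemma} to $(\alpha_n,\beta_n)$ with $b=-q^{1/2}$ and $c\to\infty$. With $x=1$ this has the clean effect $\alpha_n\mapsto q^{n^2/2}\alpha_n$, since $xq/b=-q^{1/2}=b$ makes the Pochhammer factors $(b)_n$ and $(xq/b)_n$ cancel, and $(-c)^n(xq/bc)^nq^{\binom{n}{2}}\to q^{n/2+\binom{n}{2}}=q^{n^2/2}$. On the $\beta$-side, one convolution layer is added, with kernel $(-q^{1/2})_jq^{j^2/2}/(q)_{n-j}$ and an overall factor $1/(-q^{1/2})_n$. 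Iterating this $m$ times, and tuning the innermost step so that an extra $(q)_{n_1}$ factor survives in front of $\beta_{n_1}$, produces the multisum on the LHS of~\eqref{eqn:dilationid} in the case $\delta_{a,i}=0$ for all intermediate $i$ (i.e., $a\in\{0,m\}$).

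Second, to obtain the shift $\delta_{a,i}$ at a prescribed rung $i=a\in\{1,\dots,m-1\}$, I would replace the ordinary Bailey-chain step at that rung by a Bailey-lattice step (in the spirit of Agarwal--Andrews--Bressoud, or via the variant in Lemma~\ref{littlelemma}). This swaps $\begin{bmatrix}n_{i+1}\\n_i\end{bmatrix}$ for $\begin{bmatrix}n_{i+1}+1\\n_i\end{bmatrix}$ at that single position and contributes a residual $q^{(a-1)/2}$ to the right. Applying the Bailey limit~\eqref{Baileylimit} with $x=1$ to the fully iterated pair collapses the multisum into a single sum of the form $C\sum_{k\geq 0}\alpha_kq^{mk^2/2+(a-1)k}$ for an explicit constant $C$ involving $(-q^{1/2})_\infty/(q)_\infty$, with an analogous identity at parameter $a-1$.

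Third, I would invoke the hypotheses~\eqref{eqn:gdef} and~\eqref{eqn:gcond}: writing $\alpha_k=g_k+q^kg_k$ for $k\geq 1$ and applying $q^kg_k=g_{-k}$ converts the half-line sum into a bilateral sum over $\ZZ$, exactly as $\alpha_n\mapsto f_n+f_{-n-1}$ does in Theorem~\ref{thm:main}. Forming the difference of the resulting bilateral identities at parameters $a$ and $a-1$, and performing the index shift $n\mapsto n+1$ to express $\alpha_k=(1+q^k)g_k$ in terms of $g_{n+1}$, isolates the claimed right-hand side $-q^{(a-1)/2}(-q^{1/2})_\infty(q)_\infty^{-1}\sum_{n\in\ZZ}q^{mn^2/2+na-n}g_{n+1}$, while the symmetric pieces cancel. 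The main obstacle will be the careful bookkeeping of Pochhammer normalizations and linear-in-$k$ exponents through the iterated Bailey steps: ensuring that the first iteration produces exactly $(-q^{1/2},q)_{n_1}\beta_{n_1}$ rather than a discrepant version, and that a single lattice step at rung $a$ yields both the shift $\delta_{a,i}$ at the correct position and the correct residual monomial $q^{(a-1)/2}$. Once the two bilateral identities at $a$ and $a-1$ are in hand, taking their difference is purely formal and yields~\eqref{eqn:dilationid}.
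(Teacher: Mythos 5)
Your proposal does not address the statement at hand. The statement to be proved is Lemma~\ref{Baileylemma}, the classical Bailey lemma itself: given a Bailey pair $(\alpha_n,\beta_n)$ relative to $x$, the modified pair $(\alpha_n',\beta_n')$ with $\alpha_n'=\frac{(b,c)_n(xq/bc)^n}{(xq/b,xq/c)_n}\alpha_n$ is again a Bailey pair relative to $x$. What you have written instead is a sketch of the proof of Theorem~\ref{thm:dilation} (the dilated-series difference identity~\eqref{eqn:dilationid}): you iterate the Bailey lemma, insert a lattice step to create the shifted $q$-binomial, pass to the limit via~\eqref{Baileylimit}, and use the symmetry~\eqref{eqn:gcond} to bilateralize. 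None of that establishes Lemma~\ref{Baileylemma}; indeed your very first step \emph{invokes} Lemma~\ref{Baileylemma}, so as an argument for that lemma it would be circular.

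A proof of the actual statement proceeds quite differently and does not involve multisums, lattices, or the sequences $f_n$ or $g_n$ at all. One substitutes the defining relation $\beta_j=\sum_{k=0}^{j}\alpha_k/\bigl((q)_{j-k}(xq)_{j+k}\bigr)$ into the formula for $\beta_n'$, interchanges the order of summation over $j$ and $k$, and evaluates the resulting inner sum over $j$ in closed form using the $q$-Pfaff--Saalsch\"utz summation; the closed form is exactly $\frac{(b,c)_k(xq/bc)^k}{(xq/b,xq/c)_k}\cdot\frac{1}{(q)_{n-k}(xq)_{n+k}}$, which shows $\beta_n'=\sum_{k=0}^{n}\alpha_k'/\bigl((q)_{n-k}(xq)_{n+k}\bigr)$ as required. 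The paper itself gives no proof, citing Andrews~\cite{Andmultiple}, so there is nothing in the text for your argument to parallel; but as written your proposal proves a different theorem and leaves the stated lemma untouched.
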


A result similar to Lemma \ref{Baileylemma} takes a Bailey pair relative to $x$ and produces Bailey pairs relative to $x/q$.
\begin{Lemma}[\cite{AABLattice}] \label{Baileylatticelemma}
	If $(\alpha_n,\beta_n)$ is a Bailey pair relative to $x$, then $(\alpha_n',\beta_n')$ is a Bailey pair relative to $x/q$, where
	\begin{equation*} 
		\alpha_n' = (1-x) \Bigl(\frac{x}{bc} \Bigr) \frac{(b,c)_n(x/bc)^n}{(x/b,x/c)_n}\biggl(\frac{\alpha_n}{1-xq^{2n}} - \frac{xq^{2n-2}\alpha_{n-1}}{1-xq^{2n-2}} \biggr)
	\end{equation*}
and
	\begin{equation*} 
		\beta_n' = \frac{1}{(x/b,x/c)_n} \sum_{j=0}^n \frac{(b,c)_j(x/bc)_{n-j} (x/bc)^j}{(q)_{n-j}} \beta_j.
	\end{equation*}
Here by convention we take
	\begin{equation} \label{eqn:latticeextracond}
		\alpha_{-1} = 0.
	\end{equation}
\end{Lemma}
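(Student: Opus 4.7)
The plan is to mimic the Bailey-pair proof of Theorem \ref{thm:main}, adapted to the $x=1$ setting in which our pair now lives. The conditions $\alpha_0 = 1$, $\alpha_n = (1+q^n)g_n$ for $n \geq 1$, and $g_{-n} = q^n g_n$ are the $x=1$ analogues of \eqref{eqn:fdef}--\eqref{eqn:fcond}: the factor $1+q^n$ plays the role of $(1-q^{2n+1})/(1-q)$, and the symmetry of $g$ plays the role of the symmetry of $f$. This parallel suggests the same three-part strategy—iterate the Bailey lemma, introduce the shifts, then take the Bailey limit and fold the resulting sum over $\mathbb{Z}$—should apply.

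Concretely, I would start with $(\alpha_n,\beta_n)$ relative to $1$ and iterate the classical Bailey lemma (Lemma \ref{Baileylemma}) $m$ times with parameters $b = -q^{1/2}$ and $c \to \infty$; this is the specialization that, for $x=1$, multiplies $\alpha$ by $q^{n^2/2}$ and yields a new $\beta$ of the form $\frac{1}{(-q^{1/2})_n}\sum_j\frac{(-q^{1/2})_j q^{j^2/2}}{(q)_{n-j}}\beta_j$, i.e., exactly the weight needed to build up one level of the multisum on the LHS. After $m$ such iterations, the resulting $(-q^{1/2})_\infty(q)_\infty\beta^{(m)}_\infty$ matches precisely the unshifted (``$a=0$'') form of the LHS, with $\alpha^{(m)}_n = q^{mn^2/2}\alpha_n$.

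The $\delta_{a,i}$-shifts in the $q$-binomial coefficients are introduced at the step $i = a$ by replacing the Bailey lemma there with the auxiliary Bailey-type lemma from \cite{Lo1} (Lemma \ref{littlelemma}), which is tailor-made to insert such shifts while preserving $x=1$ relativity. Note that the classical Bailey lattice (Lemma \ref{Baileylatticelemma}) degenerates at $x=1$—its overall factor $1 - x$ vanishes—so its usual internal telescoping is unavailable, and the comparison between shift positions $a$ and $a-1$ must instead be recorded externally as the difference on the LHS of \eqref{eqn:dilationid}. Taking the Bailey limit \eqref{Baileylimit} at $x = 1$ in the difference yields a sum of the form $\frac{1}{(q)_\infty^2}\sum_{k\geq 0}\bigl[\alpha^{(m,a)}_k - \alpha^{(m,a-1)}_k\bigr]$; using $\alpha_k = (1+q^k)g_k$ together with the symmetry $q^k g_k = g_{-k}$, the summand splits and the range extends to $k \in \mathbb{Z}$, producing $\sum_{n\in\mathbb{Z}} q^{\frac{m}{2}n^2+na-n}g_{n+1}$ after a reindexing $k \mapsto n+1$ governed by the shifts.

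The main obstacle is this second step: identifying exactly how Lemma \ref{littlelemma} inserts the shift at position $a$ in the $x=1$ iterated Bailey chain, and verifying that the sign and the overall prefactor $-q^{(a-1)/2}(-q^{1/2})_\infty/(q)_\infty$ on the RHS assemble correctly. The remainder is Pochhammer bookkeeping—confirming that the dilated exponent $q^{\frac{1}{2}(n_m^2 + \cdots + n_1^2)}$ and the weight $(-q^{1/2},q)_{n_1 + \delta_{a-1,0}}$ emerge in the correct positions, including the edge case $a = 1$ where the shift at position $0$ propagates into the subscript of $\beta$ rather than into a $q$-binomial.
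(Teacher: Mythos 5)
Your proposal does not prove the statement in question. The statement is Lemma~\ref{Baileylatticelemma}, the Bailey lattice: the assertion that the displayed pair $(\alpha_n',\beta_n')$, built from a Bailey pair relative to $x$ with the convention $\alpha_{-1}=0$, is itself a Bailey pair relative to $x/q$. A proof of this would have to verify the defining relation $\beta_n' = \sum_{k=0}^n \alpha_k'/\bigl((q)_{n-k}(x)_{n+k}\bigr)$, which is classically done by substituting the definition of $\beta_j$ into $\beta_n'$, interchanging summation, and evaluating the inner sum by the $q$-Pfaff--Saalsch\"utz summation (or by combining the ordinary Bailey lemma with the elementary change of relative parameter from $x$ to $x/q$). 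Nothing of this kind appears in your writeup. What you have instead sketched is a proof of Theorem~\ref{thm:dilation}: your hypotheses $\alpha_0=1$, $\alpha_n=(1+q^n)g_n$, $g_{-n}=q^ng_n$ are~\eqref{eqn:gdef}--\eqref{eqn:gcond}, and your target $\sum_{n\in\mathbb{Z}}q^{\frac{m}{2}n^2+na-n}g_{n+1}$ is the right-hand side of~\eqref{eqn:dilationid}. The paper offers no proof of Lemma~\ref{Baileylatticelemma} at all; it is quoted from~\cite{AABLattice}.

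Beyond being aimed at the wrong target, your argument is circular as a proof of the lemma, since you invoke Lemma~\ref{Baileylatticelemma} by name in the course of it. Your specific remark about it is also inaccurate: you claim the lattice ``degenerates at $x=1$'' because of the factor $1-x$ and hence cannot be used, but in the paper's proof of Lemma~\ref{lemma:dilationpair} (Step~4) the lattice is applied with $x\mapsto q$ and $b,c\mapsto q^{1/2}$, taking a pair relative to $q$ to one relative to $q/q=1$; the prefactor is then $1-q\neq 0$ and no degeneration occurs. If your task is to prove Lemma~\ref{Baileylatticelemma}, you should discard this sketch and instead carry out the Pfaff--Saalsch\"utz computation (or cite the derivation in~\cite{AABLattice}), paying particular attention to why the telescoped combination $\frac{\alpha_n}{1-xq^{2n}}-\frac{xq^{2n-2}\alpha_{n-1}}{1-xq^{2n-2}}$ and the convention $\alpha_{-1}=0$ are exactly what the change from $(xq)_{n+k}$ to $(x)_{n+k}$ forces.
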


Finally, we have two lemmas that change a Bailey pair relative to $x$ to one relative to $xq$. The first will be key to introducing $q$-binomial coefficients of the form $\bigl[\begin{smallmatrix} n_{i+1} + 1 \\ n_i \end{smallmatrix}\bigr]$. The second is not needed for the proof of Theorem \ref{CLSXYthm} but will be used to establish a certain Bailey pair later in the paper.
\begin{Lemma}[\cite{Lo1}] \label{littlelemma}
	If $(\alpha_n,\beta_n)$ is a Bailey pair relative to $x$, then $(\alpha_n',\beta_n')$ is a Bailey pair relative to $xq$, where
	\begin{equation*} 
		\alpha_n' = \frac{1}{1-xq} \biggl(\frac{1-q^{n+1}}{1-xq^{2n+2}} \alpha_{n+1} + \frac{q^n(1-xq^n)}{1-xq^{2n}} \alpha_n \biggr)
	\qquad
	\text{and}
	\qquad 
		\beta_n' = \bigl(1-q^{n+1}\bigr)\beta_{n+1}.
	\end{equation*}
\end{Lemma}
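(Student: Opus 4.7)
The plan is to verify the defining relation of a Bailey pair at $xq$, namely
\[
\beta_n' = \sum_{k=0}^n \frac{\alpha_k'}{(q)_{n-k}(xq^2)_{n+k}},
\]
by direct manipulation of the assumed identity $\beta_{n+1} = \sum_{k=0}^{n+1} \alpha_k/((q)_{n+1-k}(xq)_{n+1+k})$.

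First I would multiply through by $(1-xq)$ and use the factorization $(xq)_{n+1+k} = (1-xq)(xq^2)_{n+k}$ to rewrite the hypothesis as
\[
(1-xq)(1-q^{n+1})\beta_{n+1} = (1-q^{n+1}) \sum_{k=0}^{n+1} \frac{\alpha_k}{(q)_{n+1-k}(xq^2)_{n+k}}.
\]
The task then reduces to showing this equals $\sum_{k=0}^n (1-xq)\alpha_k'/((q)_{n-k}(xq^2)_{n+k})$, where after expanding $(1-xq)\alpha_k'$ using the formula in the lemma, the right-hand side splits into an $\alpha_{k+1}$-piece and an $\alpha_k$-piece.

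Next I would shift $k \mapsto k-1$ in the $\alpha_{k+1}$-piece of the right-hand side and compare coefficients of $\alpha_k$ on both sides. The boundary contributions match directly: the isolated $k=n+1$ term on the left corresponds to the shifted $\alpha_{n+1}$ contribution from $k=n$ on the right, while the $k=0$ case is harmless because the factor $(1-q^0)=0$ annihilates any extraneous piece. For $1 \le k \le n$, using $(q)_{n+1-k} = (q)_{n-k}(1-q^{n+1-k})$ together with the elementary decomposition $(1-q^{n+1}) = (1-q^{n+1-k}) + q^{n+1-k}(1-q^k)$, the coefficient of $\alpha_k$ on the left collapses to $(1-q^{n+1})/((q)_{n+1-k}(xq^2)_{n+k})$.

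Finally, I would place the coefficient of $\alpha_k$ on the right over the common denominator $(1-xq^{2k})(q)_{n+1-k}(xq^2)_{n+k}$, whereupon its numerator becomes
\[
(1-q^k)(1-xq^{n+k+1}) + q^k(1-xq^k)(1-q^{n+1-k}),
\]
and a short expansion confirms that this equals $(1-q^{n+1})(1-xq^{2k})$. Cancelling the factor $(1-xq^{2k})$ then yields the required match for every $k$. The only real obstacle is the careful bookkeeping at the boundary values after the index shift; the key polynomial identity above is routine.
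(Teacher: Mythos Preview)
Your proposal is correct. The paper does not actually prove this lemma; it is quoted from \cite{Lo1} and used as a black box, so there is no ``paper's own proof'' to compare against. Your direct verification of the Bailey-pair relation at $xq$ is the standard way to establish such a result, and the key polynomial identity
\[
(1-q^k)(1-xq^{n+k+1}) + q^k(1-xq^k)(1-q^{n+1-k}) = (1-q^{n+1})(1-xq^{2k})
\]
is exactly what is needed to match the coefficient of $\alpha_k$ for $1\le k\le n$, with the boundary cases $k=0$ and $k=n+1$ falling out as you describe. One small comment: your middle paragraph says the coefficient of $\alpha_k$ on the left ``collapses to'' $(1-q^{n+1})/((q)_{n+1-k}(xq^2)_{n+k})$, but that is what it already is by inspection; the decomposition $(1-q^{n+1}) = (1-q^{n+1-k}) + q^{n+1-k}(1-q^k)$ is not really needed, since the polynomial identity in your final paragraph does all the work.
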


\begin{Lemma}[\cite{Lo0}] \label{1toqlemma}
	If $(\alpha_n,\beta_n)$ is a Bailey pair relative to $x$, then $(\alpha_n',\beta_n')$ is a Bailey pair relative to $xq$, where
	\begin{equation*}
		\alpha_n' =\frac{\bigl(1-xq^{2n+1}\bigr)(xq/b)_n(-b)^nq^{\binom{n}{2}}}{(1-xq)(bq)_n} \sum_{j=0}^n \frac{(b)_j}{(xq/b)_j}(-b)^{-j}q^{-\binom{j}{2}}\alpha_j
	\qquad
	\text{and}
	\qquad
		\beta_n' = \frac{(b)_n}{(bq)_n} \beta_n.
	\end{equation*}
\end{Lemma}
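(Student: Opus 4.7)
The plan is to verify directly that the proposed pair $(\alpha'_n,\beta'_n)$ satisfies the defining relation
\begin{equation*}
	\beta'_n=\sum_{k=0}^{n}\frac{\alpha'_k}{(q)_{n-k}\bigl(xq^2\bigr)_{n+k}}
\end{equation*}
of a Bailey pair relative to $xq$. First I would substitute the double-sum expression for $\alpha'_k$ into the right-hand side and interchange the order of summation to extract the coefficient of each $\alpha_j$. Simultaneously, using $\beta'_n=(b)_n\beta_n/(bq)_n$ together with the defining relation $\beta_n=\sum_j\alpha_j/\bigl[(q)_{n-j}(xq)_{n+j}\bigr]$ for the original pair, the left-hand side can be rewritten as $\sum_j(b)_n\alpha_j/\bigl[(bq)_n(q)_{n-j}(xq)_{n+j}\bigr]$. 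Matching coefficients of $\alpha_j$ then reduces the lemma to the scalar identity (for $0\leq j\leq n$)
\begin{equation*}
	\sum_{k=j}^{n}\frac{\bigl(1-xq^{2k+1}\bigr)(xq/b)_k(-b)^{k-j}q^{\binom{k}{2}-\binom{j}{2}}}{(bq)_k(q)_{n-k}\bigl(xq^2\bigr)_{n+k}}=\frac{(1-xq)(b)_n(xq/b)_j}{(b)_j(bq)_n(q)_{n-j}(xq)_{n+j}}.
\end{equation*}

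Next I would shift $k=j+l$ and apply $\binom{j+l}{2}-\binom{j}{2}=jl+\binom{l}{2}$ together with the standard reflection $(q)_{n-j-l}^{-1}=\bigl(q^{-(n-j)};q\bigr)_l(-1)^{l}q^{(n-j)l-\binom{l}{2}}/(q)_{n-j}$. The signs and half-integer $q$-powers collapse into the single factor $(bq^n)^l$, and after dividing out the $l$-independent prefactors from both sides the identity reduces to, with $N=n-j$, $a=xq^{2j+1}$, and $v=bq^{j+1}$ (note the parameter balance $(aq/v)\cdot v=aq$),
\begin{equation*}
	\sum_{l=0}^{N}\bigl(1-aq^{2l}\bigr)\frac{(aq/v;q)_l\bigl(q^{-N};q\bigr)_l}{(v;q)_l\bigl(aq^{N+1};q\bigr)_l}\bigl(vq^{N-1}\bigr)^{l}=\frac{(1-v/q)\bigl(1-aq^N\bigr)}{1-vq^{N-1}}.
\end{equation*}

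This scalar identity is the $c=q$ specialization of Bailey's terminating very-well-poised ${}_6\phi_5$ summation, taken with the upper parameter $b$ of the ${}_6\phi_5$ set to $aq/v$. When $c=q$ the factor $(c;q)_l/(aq/c;q)_l=(q;q)_l/(a;q)_l$ inside the ${}_6\phi_5$ summand exactly cancels the factor $(a;q)_l/(q;q)_l$ from the very-well-poised normalization, leaving precisely the sum above; the right-hand side then comes from the classical product evaluation $(aq;q)_N(v/q;q)_N/\bigl[(v;q)_N(a;q)_N\bigr]$, simplified via the telescopic ratios $(aq;q)_N/(a;q)_N=(1-aq^N)/(1-a)$ and $(v/q;q)_N/(v;q)_N=(1-v/q)/(1-vq^{N-1})$, with the residual $(1-a)$ from the ${}_6W_5$ normalization absorbed.

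The main obstacle is the bookkeeping in the reduction: correctly tracking every sign and $q^{\binom{\cdot}{2}}$ contribution when $k$ is shifted to $j+l$, and recognizing the resulting very-well-poised sum as the right specialization of Bailey's ${}_6\phi_5$ rather than as a fresh identity. Once this recognition is made, the final summation is a classical result and the lemma follows.
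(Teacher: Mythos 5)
The paper does not prove this lemma; it is quoted verbatim from [Lo0], so there is no internal proof to compare against. Your direct verification is correct and is essentially the standard way such change-of-base lemmas are established. I checked the two key computations: (i) interchanging the order of summation and matching coefficients of $\alpha_j$ does reduce the claim to the scalar identity you state (including the boundary case $j=n$, where both sides equal $(1-xq)(xq/b)_n/\bigl[(bq)_n(xq)_{2n}\bigr]$ after using $(xq^2)_{2n}=(xq)_{2n}\bigl(1-xq^{2n+1}\bigr)/(1-xq)$); and (ii) after the shift $k=j+l$ the powers combine as $(-b)^l(-1)^lq^{jl+\binom{l}{2}+(n-j)l-\binom{l}{2}}=(bq^n)^l=(vq^{N-1})^l$, and with $a=xq^{2j+1}$, $v=bq^{j+1}$ the sum is exactly the very-well-poised ${}_6\phi_5$ with upper parameters $aq/v$ and $c=q$, whose evaluation $(aq)_N(v/q)_N/\bigl[(v)_N(a)_N\bigr]$ simplifies to $(1-v/q)\bigl(1-aq^N\bigr)/\bigl[(1-a)\bigl(1-vq^{N-1}\bigr)\bigr]$ and matches your right-hand side once the $(1-a)^{-1}$ normalization is accounted for. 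The only caveat is that the write-up is a plan rather than a finished argument, so the bookkeeping you flag as the main obstacle would still need to be written out; but every step of the plan is sound and the identification of the summation formula is the right one.
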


\section{Proof of Theorem \ref{thm:main} and applications}
\label{sec:mainthm}

We begin this section by establishing a key Bailey lemma for the proof of Theorem \ref{thm:main}.
\begin{Lemma} \label{lemma:mainpair}
	Let $m \geq 1$ and $0 \leq a \leq m$. Suppose that $(\alpha_n,\beta_n)$ is a Bailey pair relative to $q$ and that there is a sequence $(f_n)_{n \geq -1}$ such that
	\begin{align} \label{eqn:fdeflater}
		\alpha_n=\dfrac{1-q^{2n+1}}{1-q}f_n,
	\end{align}
	where
	\begin{align}
		q^{-1}f_{-1}=-f_0.
		\label{eqn:f-1f0}
	\end{align}
	Then $(\alpha_n',\beta_n')$ is a Bailey pair relative to $q$, where
	\begin{equation*} 
		\alpha_n' =
		\begin{cases}
			\dfrac{1}{1-q}q^{m\binom{n+1}{2}}\bigl(
			q^{a(n+1)}f_{n+1} -q^{-an+2n-1}f_{n-1}
			\bigr) & \text{if $0 \leq a \leq m-1$}, \vspace{1mm}\\
			\dfrac{1}{1-q}q^{m \binom{n+1}{2}}\bigl(1-q^{2n+1}\bigr)f_n &\text{if $a=m$},
		\end{cases}
	\end{equation*}
	and
	\begin{align}
		\beta_n' &{}= \beta_{n_{m+1}}' \nonumber \\
 &{}= \frac{1}{(-q)_{n_{m+1}}} \sum_{n_m, \dots, n_1 \geq 0} \frac{q^{\binom{n_m+1}{2} + \cdots + \binom{n_1+1}{2}}\bigl(q^2;q^2\bigr)_{n_1+\delta_{a,0}}}{(q)_{n_{m+1}-n_m} (q)_{n_m}} \beta_{n_1+\delta_{a,0}} \prod_{i=1}^{m-1} \begin{bmatrix} n_{i+1} + \delta_{a,i} \\ n_i \end{bmatrix}.\label{eqn:mainpairbeta}
	\end{align}
	
\end{Lemma}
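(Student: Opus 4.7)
The plan is to build up the Bailey pair $(\alpha'_n,\beta'_n)$ one summation layer at a time by iterated applications of the Bailey lemma (Lemma~\ref{Baileylemma}), replacing one of these iterations with a composite step built from Lemmas~\ref{Baileylatticelemma} and~\ref{littlelemma} at the position where the $\delta_{a,i}$ shift needs to be inserted.

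I would begin with the case $a=m$, where no shift is required. Applying Lemma~\ref{Baileylemma} with $x=q$, $b=-q$, $c\to\infty$ multiplies $\alpha_n$ by $q^{\binom{n+1}{2}}$ and transforms $\beta_n$ into $\frac{1}{(-q)_n}\sum_j \frac{(-q)_j q^{\binom{j+1}{2}}}{(q)_{n-j}}\beta_j$. Introducing the normalized sequence $\tilde\beta_n := (-q)_n\beta_n$ collapses the recursion to $\tilde\beta'_n = \sum_j \frac{q^{\binom{j+1}{2}}}{(q)_{n-j}}\tilde\beta_j$; iterating $m$ times then gives
\begin{equation*}
\tilde\beta^{(m+1)}_{n_{m+1}}=\sum \frac{q^{\binom{n_m+1}{2}+\cdots+\binom{n_1+1}{2}}(-q)_{n_1}\beta_{n_1}}{(q)_{n_{m+1}-n_m}\prod_{i=1}^{m-1}(q)_{n_{i+1}-n_i}}.
\end{equation*}
Combining this with the identity $\prod_{i=1}^{m-1}\bigl[\begin{smallmatrix}n_{i+1}\\n_i\end{smallmatrix}\bigr]=\frac{(q)_{n_m}}{(q)_{n_1}\prod(q)_{n_{i+1}-n_i}}$ and $(q^2;q^2)_{n_1}=(q)_{n_1}(-q)_{n_1}$ produces the target $\beta'_{n_{m+1}}$, while $\alpha^{(m+1)}_n=q^{m\binom{n+1}{2}}\alpha_n$ with \eqref{eqn:fdeflater} gives the target $\alpha'_n$.

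For $0\leq a\leq m-1$, my plan is to insert at step $a+1$ of the iteration the composite ``Lemma~\ref{littlelemma} followed by Lemma~\ref{Baileylatticelemma}''. Lemma~\ref{littlelemma} replaces $\beta_n$ with $(1-q^{n+1})\beta_{n+1}$ and moves the base from~$q$ to~$q^2$; this index shift is precisely what produces the $+1$ seen in the target (in the $q$-binomial at position $a$ when $1\leq a\leq m-1$, and in the inner $\beta$ when $a=0$). The Bailey-lattice step, applied at $x=q^2$ with parameters chosen so that the base returns to $q$, supplies the two-term shape of $\alpha'_n$: its bracket $\alpha_n/(1-xq^{2n})-xq^{2n-2}\alpha_{n-1}/(1-xq^{2n-2})$ generates exactly a linear combination of $f_{n+1}$ and $f_{n-1}$ after using \eqref{eqn:fdeflater}. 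Here the edge condition \eqref{eqn:f-1f0} is precisely what makes Lemma~\ref{Baileylatticelemma}'s convention \eqref{eqn:latticeextracond}, $\alpha_{-1}=0$, compatible with formally extending $\alpha_n=(1-q^{2n+1})f_n/(1-q)$ to $n=-1$. The remaining $m-a-1$ Bailey-lemma iterations contribute $q^{(m-a-1)\binom{n+1}{2}}$, which together with the $q^{(a+1)\binom{n+1}{2}}$ accrued up through the composite step assembles into the overall $q^{m\binom{n+1}{2}}$ of the target.

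The main obstacle will be verifying that the composite step produces exactly the two-term $\alpha'_n=\frac{1}{1-q}q^{m\binom{n+1}{2}}\bigl(q^{a(n+1)}f_{n+1}-q^{-an+2n-1}f_{n-1}\bigr)$ with the right prefactors and with no spurious $f_n$ contribution; this amounts to choosing the parameters~$b,c$ in the Bailey-lattice application carefully and tracking the combinatorial factors $(1\pm q^{n+1})$ that appear. Once the parameters are pinned down, the induction on the index of the iteration and the recognition of the $q$-binomials from the differences $n_{i+1}-n_i$ proceed just as in the $a=m$ case.
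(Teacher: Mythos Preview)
Your overall architecture matches the paper's: iterate the Bailey lemma $a$ times, perform a ``shift step'' that raises the base to $q^2$ via Lemma~\ref{littlelemma} and then lowers it back to $q$ via Lemma~\ref{Baileylatticelemma}, and finish with $m-a-1$ more Bailey-lemma iterations. The $a=m$ case is handled correctly.

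The gap is in your composite step. After Lemma~\ref{littlelemma} the new $\alpha_n$ carries a factor $1-q^{n+1}$, so in the lattice bracket
\[
\frac{\alpha_n}{1-q^{2n+2}}-\frac{q^{2n}\alpha_{n-1}}{1-q^{2n}}
\]
you are left with mismatched denominators $1/(1+q^{n+1})$ and $1/(1+q^{n})$. The $f_n$ contributions from the two terms then cannot cancel, regardless of how you choose $b,c$ in Lemma~\ref{Baileylatticelemma}: those parameters only enter through the common prefactor $C_n=(b,c)_n(q^2/bc)^n/(q^2/b,q^2/c)_n$ outside the bracket and do nothing to repair the internal mismatch. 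So ``choosing $b,c$ carefully'' cannot eliminate the spurious $f_n$ term. On the $\beta$ side there is a parallel problem: Lemma~\ref{littlelemma} gives the index shift but no summation layer, and the lattice with $b=c=q$ (needed to keep $\beta$ intact) adds none either, leaving you one layer short of the $m$ required.

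The fix, which is what the paper does, is to insert one more application of Lemma~\ref{Baileylemma} at $x=q^2$ with $b=-q^2$, $c\to\infty$ \emph{between} Lemma~\ref{littlelemma} and Lemma~\ref{Baileylatticelemma}. This multiplies $\alpha_n$ by $\dfrac{1+q^{n+1}}{1+q}\,q^{\binom{n+1}{2}}$, converting $1-q^{n+1}$ into $1-q^{2n+2}$ so that the lattice denominators cancel cleanly and the $f_n$ terms drop out; simultaneously it supplies the missing summation layer on the $\beta$ side. After this correction, the lattice step can (and should) be taken with $b=c=q$, which leaves $\beta$ unchanged. With this one extra step your outline becomes the paper's proof.
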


\begin{proof}
	{\it Step 1\emph{:}} First assume $a > 0$. Iterate Lemma \ref{Baileylemma} $a$ times with $x\mapsto q$, $b\mapsto -q$, and $c\rightarrow \infty$ to obtain another Bailey pair relative to $q$. The resulting $\alpha_n$ and $\beta_n$ are
	\begin{align}
		\alpha_n=q^{a\binom{n+1}{2}}\dfrac{1-q^{2n+1}}{1-q}f_n
		\label{eqn:alphafn}
	\end{align}
	and
	\begin{align}
		\beta_n=\beta_{n_{a+1}}=
		\dfrac{1}{(-q)_{n_{a+1}}}\sum_{n_a,\dots,n_1\geq 0}
		\dfrac{q^{\binom{n_a+1}{2}+\cdots+\binom{n_1+1}{2}}(-q)_{n_1}}{(q)_{n_{a+1}-n_a}\cdots (q)_{n_2-n_1}}\beta_{n_1}. \label{eqn:betastep1}
	\end{align}
	Here and throughout the paper we use the convention that $1/(q)_n = 0$ if $n <0$.
	If $a=m$, we stop here. Multiplying the numerator and denominator of the above sum by $(q)_{n_1} \cdots (q)_{n_m}$ and rewriting in terms of $q$-binomial coefficients using \eqref{eqn:qbinom} gives \eqref{eqn:mainpairbeta} for $a=m$.
	
	{\it Step 2\emph{:}} Apply Lemma \ref{littlelemma} with $x \mapsto q$ to obtain a Bailey pair relative to $q^2$:
	\begin{align*}
		\alpha_n &{}= \dfrac{1-q^{n+1}}{(q)_2}\bigl(
		q^{a\binom{n+2}{2}}f_{n+1} + q^{n+a\binom{n+1}{2}}f_n\bigr)\\
		&{}=\dfrac{1-q^{n+1}}{(q)_2}q^{(a+2)\binom{n+1}{2}-n^2}
		\bigl(
		f_n+q^{a(n+1)-n}f_{n+1}\bigr)
	\end{align*}
	and
	\begin{align*}
		\beta_n=\beta_{n_{a+1}}=
		\dfrac{1-q^{n_{a+1}+1}}{(-q)_{n_{a+1}+1}}\sum_{n_a,\dots,n_1\geq 0}
		\dfrac{q^{\binom{n_a+1}{2}+\cdots+\binom{n_1+1}{2}}(-q)_{n_1}}{(q)_{n_{a+1}+1-n_a}\cdots (q)_{n_2-n_1}}\beta_{n_1}.
	\end{align*}

	{\it Step 3\emph{:}} Use Lemma \ref{Baileylemma} with $x\mapsto q^2$, $b\mapsto-q^2$, and $c \to \infty$ to obtain another Bailey pair relative to $q^2$. The resulting pair is
	\begin{align*}
		\alpha_n
		=\dfrac{1-q^{2n+2}}{(1+q)(q)_2}q^{(a+3)\binom{n+1}{2}-n^2}
		\bigl(
		f_n+q^{a(n+1)-n}f_{n+1}\bigr)
	\end{align*}
	and
	\begin{align*}
		\beta_n=\beta_{n_{a+2}}=
		\dfrac{1}{(1+q)(-q)_{n_{a+2}}}\sum_{n_{a+1},\dots,n_1\geq 0}
		\dfrac{q^{\binom{n_{a+1}+1}{2}+\cdots+\binom{n_1+1}{2}}\bigl(1-q^{n_{a+1}+1}\bigr)(-q)_{n_1}}{(q)_{n_{a+2}-n_{a+1}}(q)_{n_{a+1}+1-n_a}\cdots (q)_{n_2-n_1}}\beta_{n_1}.
	\end{align*}
	At this point, we multiply both
	$\alpha_n$ and $\beta_n$ by $1+q$.

	{\it Step 4\emph{:}} Use Lemma \ref{Baileylatticelemma} with $x \mapsto q^2$ and $b,c \mapsto q$ to obtain a Bailey pair relative to $q$. This retains the $\beta_n$, but does change the $\alpha_n$. When $n \geq 1$, we have
	\begin{gather*}
		\alpha_n = \dfrac{1}{1-q}
		\bigl(
		q^{(a+3)\binom{n+1}{2}-n^2}\bigl(f_n+q^{a(n+1)-n}f_{n+1}\bigr) \\ \hphantom{\alpha_n = \dfrac{1}{1-q}\bigl(}{}
		- q^{2n+(a+3)\binom{n}{2}-(n-1)^2}\bigl(f_{n-1}+q^{an-n+1}f_{n}\bigr)
		\bigr)\\ \hphantom{\alpha_n}{}
		=
		\dfrac{1}{1-q}
		\bigl(
		q^{(a+3)\binom{n+1}{2}-n^2+a(n+1)-n}f_{n+1}
		-
		q^{2n+(a+3)\binom{n}{2}-(n-1)^2}f_{n-1}
		\bigr)\\ \hphantom{\alpha_n}{}
		= \dfrac{1}{1-q}q^{(a+1)\binom{n+1}{2}}\bigl(
		q^{a(n+1)}f_{n+1} -q^{-an+2n-1}f_{n-1}
		\bigr).
	\end{gather*}
	When $n=0$, the above is also valid, using \eqref{eqn:latticeextracond} and \eqref{eqn:f-1f0}.

	{\it Step 5\emph{:}} Iterate Lemma \ref{Baileylemma} $m-a-1$ times with $x\mapsto q$, $b\mapsto -q$, and $c\rightarrow \infty$ to arrive at the final Bailey pair relative to $q$:
	\begin{align*}
		\alpha_n
		=
		\dfrac{1}{1-q}q^{m\binom{n+1}{2}}\bigl(
		q^{a(n+1)}f_{n+1} -q^{-an+2n-1}f_{n-1}
		\bigr)
	\end{align*}
	and
	\begin{align*}
		\beta_n=\beta_{n_{m+1}}=
		\dfrac{1}{(-q)_{n_{m+1}}}\sum_{n_m,\dots,n_1\geq 0}
		\dfrac{q^{\binom{n_{m}+1}{2}+\cdots+\binom{n_1+1}{2}}\bigl(1-q^{n_{a+1}+1}\bigr)(-q)_{n_1}}
		{(q)_{n_{m+1}-n_{m}}\cdots
			(q)_{n_{a+1}+1-n_a}\cdots (q)_{n_2-n_1}}\beta_{n_1}.
	\end{align*}
	Rewriting the sum in terms of $q$-binomial coefficients gives{\samepage
	\begin{equation*} 
		\beta_n = \beta_{n_{m+1}} = \frac{1}{(-q)_{n_{m+1}}} \sum_{n_m, \dots, n_1 \geq 0} \frac{q^{\binom{n_m+1}{2} + \cdots + \binom{n_1+1}{2}}\bigl(q^2;q^2\bigr)_{n_1}}{(q)_{n_{m+1}-n_m} (q)_{n_m}} \beta_{n_1} \prod_{i=1}^{m-1} \begin{bmatrix} n_{i+1} + \delta_{a,i} \\ n_i \end{bmatrix},
	\end{equation*}
	which coincides with \eqref{eqn:mainpairbeta}.}
	
	{\it Step 6\emph{:}}
	If $a=0$, performing Steps 2--5 leads to the following $\beta_n$ (while the formula for $\alpha_n$ is the same as in Step 5 with $a\mapsto 0$):
	\begin{align*}
		\beta_n=\beta_{n_{m+1}}=
		\dfrac{1}{(-q)_{n_{m+1}}}\sum_{n_m,\dots,n_1\geq 0}
		\dfrac{q^{\binom{n_{m}+1}{2}+\cdots+\binom{n_1+1}{2}}\bigl(1-q^{n_{1}+1}\bigr)(-q)_{n_1+1}}
		{(q)_{n_{m+1}-n_{m}}\cdots (q)_{n_2-n_1}}
		\beta_{n_1+1}.
	\end{align*}
	Rewriting this in terms of $q$-binomial coefficients gives
	\begin{equation*} 
		\beta_n = \beta_{n_{m+1}} = \frac{1}{(-q)_{n_{m+1}}} \sum_{n_m, \dots, n_1 \geq 0} \frac{q^{\binom{n_m+1}{2} + \cdots + \binom{n_1+1}{2}}\bigl(q^2;q^2\bigr)_{n_1+1}}{(q)_{n_{m+1}-n_m} (q)_{n_m}} \beta_{n_1+1} \prod_{i=1}^{m-1} \begin{bmatrix} n_{i+1} + \delta_{a,i} \\ n_i \end{bmatrix},
	\end{equation*}
	which again matches \eqref{eqn:mainpairbeta}. This completes the proof.
\end{proof}

Armed with this Bailey pair, we are now ready to prove Theorem \ref{thm:main}.

\begin{proof}[Proof of Theorem \ref{thm:main}]
	Let $(\alpha_n,\beta_n)$ be a Bailey pair relative to $q$ satisfying \eqref{eqn:fdeflater} and \eqref{eqn:f-1f0}.
	Applying \eqref{Baileylimit} to the Bailey pair in Lemma \ref{lemma:mainpair}, we obtain
	\begin{gather}
			\sum_{n_m, \dots, n_1 \geq 0} \frac{q^{\binom{n_m+1}{2} + \cdots + \binom{n_1+1}{2}}\bigl(q^2;q^2\bigr)_{n_1+\delta_{a,0}}}{(q)_{n_m}} \beta_{n_1+\delta_{a,0}} \prod_{i=1}^{m-1} \begin{bmatrix} n_{i+1} + \delta_{a,i} \\ n_i \end{bmatrix}
			\nonumber\\
			\qquad{}=
			\frac{(-q)_{\infty}}{(q)_\infty}\cdot
			\begin{cases}
				\displaystyle\sum_{n\geq 0}
				q^{m\binom{n+1}{2}}\bigl(
				q^{a(n+1)}f_{n+1} -q^{-an+2n-1}f_{n-1}\bigr)
				& \text{if $0\leq a \leq m-1$},\vspace{1mm}\\
				\displaystyle\sum_{n\geq 0}
				q^{m\binom{n+1}{2}}\bigl(
				1-q^{2n+1}\bigr)f_n
				& \text{if $a=m$}.
			\end{cases} \label{eqn:mainintermediate}
	\end{gather}
	If $f_n$ also satisfies \eqref{eqn:fcond}, then we obtain the right-hand side of \eqref{eqn:mainid} from the right-hand side of~\eqref{eqn:mainintermediate} when $a <m$ by computing as follows:
	\begin{align*}
		&\sum_{n\geq 0}
		q^{m\binom{n+1}{2}}\bigl(
		q^{a(n+1)}f_{n+1} -q^{-an+2n-1}f_{n-1}\bigr) \\
		&\qquad{}=\sum_{n\geq 1}q^{m\binom{n}{2}+an}f_n
		-\sum_{n\geq 0}
		q^{m\binom{n+1}{2}-an+2n-1}f_{n-1}\\
		&\qquad{}=\sum_{n\geq 1}q^{m\binom{n}{2}+an}f_n
		-\sum_{n\leq 0}
		q^{m\binom{n}{2}+an-2n-1}f_{-n-1} \\
		&\qquad{}=\sum_{n\geq 1}q^{m\binom{n}{2}+an}f_n
		+\sum_{n\leq 0}
		q^{m\binom{n}{2}+an}f_{n}.
	\end{align*}
	The case $a=m$ is similar.
\end{proof}

We now give several applications of Theorem \ref{thm:main}, beginning with Theorem \ref{CLSXYthm}.

\begin{proof}[Proof of Theorem \ref{CLSXYthm}]
	Using the Bailey pair in \eqref{seed1alpha} and \eqref{seed1beta}, we have
	\begin{equation*}
		f_n = (-1)^nq^{n^2},
	\end{equation*}
	which is readily seen to satisfy \eqref{eqn:fcond}.
	Applying Theorem \ref{thm:main}, the left-hand side of \eqref{eqn:mainid} becomes the left-hand side of \eqref{CLSXYeq}, while the sum on the right-hand side is
	\begin{align*}
		\sum_{n\in\ZZ}q^{m\binom{n}{2}+an}f_n =
		\sum_{n\in\ZZ}(-1)^nq^{\frac{m+2}{2}n^2+(a-\frac{m}{2})n }
		 =\bigl( q^{a+1} , q^{m+1-a} , q^{m+2};q^{m+2}\bigr)_\infty,
	\end{align*}
	by \eqref{eqn:jtp}.
\end{proof}

Our next two applications of Theorem \ref{thm:main} are contained in the following theorems.

\begin{Theorem} \label{q^2thm}
	For $m \geq 1$ and $0 \leq a \leq {m}$, we have
	\begin{gather}
			\sum_{n_m,\dots ,n_1 \geq 0} \frac{q^{n_m^2+n_m + \cdots + n_1^2+n_1}}{\bigl(q^2;q^2\bigr)_{n_m} \bigl(-q;q^2\bigr)_{n_1 + \delta_{a,0}}} \prod_{i=1}^{m-1} \begin{bmatrix} n_{i+1} + \delta_{a,i} \\ n_i \end{bmatrix}_{q^2} \nonumber\\
			\qquad{} = \frac{\bigl(-q^2;q^2\bigr)_{\infty}\bigl(q^{2a+1},q^{2m-2a+2},q^{2m+3};q^{2m+3}\bigr)_{\infty}}{\bigl(q^2;q^2\bigr)_{\infty}}. \label{q^2eq}
	\end{gather}
\end{Theorem}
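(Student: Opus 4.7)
The plan is to apply Theorem \ref{thm:main} after the substitution $q \mapsto q^2$ throughout. Under this substitution, the summand weight $q^{\binom{n_j+1}{2}}$ becomes $q^{2\binom{n_j+1}{2}} = q^{n_j^2+n_j}$, the $q$-binomial coefficients acquire base $q^2$, and the symmetry \eqref{eqn:fcond} becomes $q^{2n}f_n = -q^{-2n-2}f_{-n-1}$. This matches the structure of the left-hand side of \eqref{q^2eq} exactly, provided we supply a Bailey pair relative to $q^2$ whose $\beta_n$ absorbs the unwanted $(q^4;q^4)_{n_1+\delta_{a,0}}$ factor into $1/(-q;q^2)_{n_1+\delta_{a,0}}$.

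The natural choice is the classical Bailey pair relative to $q^2$ from Slater's list,
\begin{equation*}
\alpha_n = \frac{1-q^{4n+2}}{1-q^2}(-1)^n q^{n(3n-1)/2}, \qquad \beta_n = \frac{1}{(q^4;q^4)_n(-q;q^2)_n},
\end{equation*}
which yields $f_n = (-1)^n q^{n(3n-1)/2}$. A short check shows that $q^{2n}f_n$ and $-q^{-2n-2}f_{-n-1}$ both simplify to $(-1)^n q^{n(3n+3)/2}$, so \eqref{eqn:fcond} (in base $q^2$) holds. Moreover $(q^4;q^4)_{n_1+\delta_{a,0}}\beta_{n_1+\delta_{a,0}} = 1/(-q;q^2)_{n_1+\delta_{a,0}}$, so the left-hand side of \eqref{eqn:mainid} becomes the left-hand side of \eqref{q^2eq} verbatim.

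For the right-hand side, I would combine the exponent of $f_n$ with the factor $q^{2m\binom{n}{2}+2an}$ coming from Theorem \ref{thm:main} and simplify to
\begin{equation*}
\sum_{n\in\ZZ}(-1)^n q^{(2m+3)\binom{n}{2}+(2a+1)n} = \bigl(q^{2a+1},q^{2m+2-2a},q^{2m+3};q^{2m+3}\bigr)_\infty,
\end{equation*}
where the last equality is the Jacobi triple product \eqref{eqn:jtp} applied with $q \mapsto q^{(2m+3)/2}$ and $z = q^{(4a-2m-1)/2}$ (the intermediate half-integer powers cancel to give the integer exponents on both sides). Multiplying by the prefactor $(-q^2;q^2)_\infty/(q^2;q^2)_\infty$ from Theorem \ref{thm:main} then delivers \eqref{q^2eq}.

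The main obstacle is really just pointing to the correct Bailey pair relative to $q^2$; once the pair above is identified (it is standard and easily extracted from Slater's list or from the Bailey machinery applied to a unit seed), the remaining steps amount to one substitution, one symmetry check, and one application of the Jacobi triple product.
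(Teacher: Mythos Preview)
Your proposal is correct and coincides with the paper's proof: the paper applies Theorem~\ref{thm:main} after writing the identity in base $q^{1/2}$, using the seed pair with $f_n=(-1)^nq^{\frac14 n(3n-1)}$ and $\beta_n=1/\bigl((q^2;q^2)_n(-q^{1/2};q)_n\bigr)$ from \cite[equation~(4.4)]{Wa1}, which under your substitution $q\mapsto q^2$ becomes exactly the pair you wrote down. Your triple-product computation and symmetry check match the paper's as well; the only cosmetic difference is that the paper attributes the pair to Warnaar rather than Slater.
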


\begin{Theorem} \label{strangesignthm}
	For $m \geq 1$ and $0 \leq a \leq {m}$, we have
	\begin{gather}
			\sum_{n_m, \dots, n_1 \geq 0} \frac{q^{\binom{n_m+1}{2} + \cdots + \binom{n_1+1}{2}}\bigl(-1;q^2\bigr)_{n_1 + \delta_{a,0}}}{(q)_{n_m}\bigl(q;q^2\bigr)_{n_1+ \delta_{a,0}}} \prod_{i=1}^{m-1} \begin{bmatrix} n_{i+1} + \delta_{a,i} \\ n_i \end{bmatrix} \nonumber\\
			\qquad{}= \frac{(-q)_{\infty}}{(q)_{\infty}} \bigl(-q^a,q^{m+1+a},q^{m+1-a},-q^{2m+2-a},-q^{m+1},q^{2m+2};q^{2m+2}\bigr)_{\infty}.
\label{strangesignex}
	\end{gather}
\end{Theorem}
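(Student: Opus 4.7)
The plan is to apply Theorem~\ref{thm:main}. Comparing the left-hand side of \eqref{strangesignex} with that of \eqref{eqn:mainid} forces the choice
\[
\beta_n = \frac{(-1;q^2)_n}{(q^2;q^2)_n(q;q^2)_n} = \frac{(-1;q^2)_n}{(q)_{2n}}.
\]
The first task is therefore to produce a Bailey pair relative to $q$ with exactly this $\beta_n$, either by locating one in the literature (for instance in Slater's lists) or by deriving it from a convenient seed through the Bailey machinery. The most promising route is to start from a Bailey pair relative to $1$ and apply Lemma~\ref{1toqlemma} with $b$ chosen so that the factor $(-1;q^2)_n$ emerges on the $\beta$-side; a further application of Lemma~\ref{Baileylemma} may then be needed to put the pair into its final shape.

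Once the pair is in hand, I read off $f_n = (1-q)\alpha_n/(1-q^{2n+1})$, extend it to all $n\in\ZZ$, and verify the symmetry $q^n f_n = -q^{-n-1}f_{-n-1}$ of \eqref{eqn:fcond}. This verification is the crux of the argument: the symmetry is not automatic, and its success is what ultimately makes the identity collapse to a product. Granted the check, Theorem~\ref{thm:main} immediately identifies the left-hand side of \eqref{strangesignex} with
\[
\frac{(-q)_\infty}{(q)_\infty}\sum_{n\in\ZZ}q^{m\binom{n}{2}+an}f_n.
\]

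The final step is to evaluate this theta-like sum as the six-fold product in \eqref{strangesignex}. That product splits naturally as two Jacobi triple products together with an extra half-theta factor: the pairs $(q^{a},q^{2m+2-a})$ and $(q^{m+1+a},q^{m+1-a})$ are each conjugate about $q^{m+1}$ modulo $q^{2m+2}$, while $-q^{m+1}$ is self-conjugate. Accordingly, I expect $f_n$ itself to decompose as a sum of two terms, each giving a Jacobi triple product via \eqref{eqn:jtp} after multiplication by $q^{m\binom{n}{2}+an}$ and summation over $\ZZ$; the residual factor $(-q^{m+1};q^{2m+2})_\infty/(q^{2m+2};q^{2m+2})_\infty$ would arise from combining these two triple products and correcting for the doubly-counted modulus. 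The principal obstacle throughout is the first step: pinning down the explicit Bailey pair with the required $\beta_n$ and checking \eqref{eqn:fcond} for its associated $f_n$; once these are in place, the remainder is routine triple-product manipulation.
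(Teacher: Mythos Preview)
Your overall strategy coincides with the paper's: apply Theorem~\ref{thm:main} with the Bailey pair having $\beta_n=(-1;q^2)_n/(q)_{2n}$, verify \eqref{eqn:fcond}, and evaluate the resulting bilateral sum as a product. Two of your expectations about the details are off, however.

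First, the needed Bailey pair relative to $q$ is not in the standard lists; the paper isolates it as a separate lemma (Lemma~\ref{newpair}). The construction does go through Lemma~\ref{1toqlemma}, but not by choosing $b$ to manufacture $(-1;q^2)_n$: one starts from Slater's pair relative to $1$ that \emph{already} has $B_n=(-1;q^2)_n/(q)_{2n}$, and then applies Lemma~\ref{1toqlemma} with $b=0$, which leaves $\beta_n$ untouched and yields $\alpha_n=\tfrac{1-q^{2n+1}}{1-q}(-1)^{\binom{n}{2}}q^{\binom{n}{2}}$ after a short induction. No extra pass through Lemma~\ref{Baileylemma} is required.

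Second, the resulting $f_n=(-1)^{\binom{n}{2}}q^{\binom{n}{2}}$ is a single term, not a sum of two, so the product side is \emph{not} obtained by splitting into two Jacobi triple products. Instead the paper applies \eqref{eqn:jtp} once with the imaginary substitution $q\mapsto {\rm i}q^{(m+1)/2}$, $z\mapsto {\rm i}q^{(2a-m-1)/2}$ (using $(-1)^n{\rm i}^{n+n^2}=(-1)^{\binom{n}{2}}$) to obtain the triple product $\bigl(-q^{a},q^{m+1-a},-q^{m+1};-q^{m+1}\bigr)_\infty$; expanding each factor with base $-q^{m+1}$ into even and odd parts modulo $q^{2m+2}$ then gives the six-factor product in \eqref{strangesignex}.
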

For $m = 1$ and $a=0,1$, Theorem \ref{q^2thm} gives the identities
\begin{equation*}
	\sum_{n \geq 0} \frac{q^{n^2+n}}{\bigl(q^2;q^2\bigr)_n\bigl(-q;q^2\bigr)_{n+1}} = \frac{\bigl(-q^2;q^2\bigr)_{\infty}\bigl(q,q^4,q^5;q^5\bigr)_{\infty}}{\bigl(q^2;q^2\bigr)_{\infty}}
\end{equation*}
and
\begin{equation*}
	\sum_{n \geq 0} \frac{q^{n^2+n}}{\bigl(q^2;q^2\bigr)_n\bigl(-q;q^2\bigr)_{n}} = \frac{\bigl(-q^2;q^2\bigr)_{\infty}\bigl(q^2,q^3,q^5;q^5\bigr)_{\infty}}{\bigl(q^2;q^2\bigr)_{\infty}}.
\end{equation*}
The first of these appears in Slater's compendium of Rogers--Ramanujan type identities \cite[equation~(17)]{Sl2}, while the second appears there in an equivalent form \cite[equation~(99)]{Sl2}. Theorem~\ref{strangesignthm} generalizes
\begin{equation*}
	\sum_{n \geq 0} \frac{q^{\binom{n+1}{2}}\bigl(-q^2;q^2\bigr)_n}{(q)_n\bigl(q;q^2\bigr)_{n+1}} = (-q)_{\infty}^2\bigl(-q^4;q^4\bigr)_{\infty}
\end{equation*}
and
\begin{equation*}
	\sum_{n \geq 0} \frac{q^{\binom{n+1}{2}}\bigl(-1;q^2\bigr)_n}{(q)_n\bigl(q;q^2\bigr)_{n}} = (-q)_{\infty}^2\bigl(-q^2;q^4\bigr)_{\infty},
\end{equation*}
which are the cases $(a,b) =({\rm i}q,-{\rm i}q)$
and $({\rm i},-{\rm i})$ of the $q$-analogue of Gauss' second theorem (see~\cite[equa\-tion~(1.8)]{AndKummer} or \cite[Appendix II, equation~(II.11)]{GR}),
\begin{equation*}
	\sum_{n \geq 0} \frac{(a,b)_nq^{\binom{n+1}{2}}}{(q)_n\bigl(abq;q^2\bigr)_n} = \frac{\bigl(aq,bq;q^2\bigr)_{\infty}}{\bigl(q,abq;q^2\bigr)_{\infty}}.
\end{equation*}

\begin{proof}[Proof of Theorem \ref{q^2thm}]
	We will prove this theorem with $q\mapsto q^{1/2}$.
	We start with the seed pair relative to $q$ \cite[equation~(4.4)]{Wa1} determined by
	\begin{equation} \label{eqn:fn2}
		f_n = (-1)^nq^{\frac{1}{4}n(3n-1)},
	\end{equation}
	which satisfies \eqref{eqn:fcond},
	and
	\begin{equation*}
		\beta_n = \frac{1}{\bigl(q^2;q^2\bigr)_n\bigl(-q^{\frac{1}{2}};q\bigr)_n}.
	\end{equation*}
	The sum is obtained immediately from \eqref{eqn:mainid}, and on the product side we have
	\begin{align*}
		\sum_{n\in\ZZ}q^{m\binom{n}{2}+an}f_n =
		\sum_{n\in\ZZ}(-1)^nq^{\frac{2m+3}{4}n^2+(\frac{4a-2m-1}{4})n }
		 =\bigl( q^{a+\frac{1}{2}} , q^{m+1-a} , q^{m+\frac{3}{2}};q^{m+\frac{3}{2}}\bigr)_\infty,
	\end{align*}		
	by \eqref{eqn:jtp}.
\end{proof}

To prove Theorem \ref{strangesignthm}, we will need the following Bailey pair relative to $q$, which does not appear to be in the literature.

\begin{Lemma} \label{newpair}
	The pair $(\alpha_n,\beta_n)$ is a Bailey pair relative to $q$, where
	\begin{equation*}
		\alpha_n = \frac{(-1)^{\binom{n}{2}}q^{\binom{n}{2}}\bigl(1-q^{2n+1}\bigr)}{1-q}
	\qquad
	\text{and}
	\qquad
		\beta_n = \frac{\bigl(-1;q^2\bigr)_n}{(q)_{2n}}.
	\end{equation*}
\end{Lemma}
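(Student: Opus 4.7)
The strategy is to apply Lemma~\ref{1toqlemma} with $x=1$ and $b=-1$ to an auxiliary Bailey pair $(\alpha_n^{(0)}, \beta_n^{(0)})$ relative to~$1$. For these parameters, Lemma~\ref{1toqlemma} reads
\begin{equation*}
	\beta_n' = \frac{(-1)_n}{(-q)_n}\,\beta_n^{(0)}, \qquad \alpha_n' = \frac{(1-q^{2n+1})\,q^{\binom{n}{2}}}{1-q}\sum_{j=0}^n \frac{(-1)_j\, q^{-\binom{j}{2}}}{(-q)_j}\,\alpha_j^{(0)}.
\end{equation*}
Matching with the target of Lemma~\ref{newpair} forces
\begin{equation*}
	\beta_n^{(0)} = \begin{cases} 1, & n=0,\\ \dfrac{(1+q^n)(-1;q^2)_n}{2\,(q)_{2n}}, & n\geq 1,\end{cases}
\end{equation*}
together with the constraint $\sum_{j=0}^n \frac{(-1)_j\,q^{-\binom{j}{2}}}{(-q)_j}\,\alpha_j^{(0)} = (-1)^{\binom{n}{2}}$. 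Successive differences in this constraint give $\alpha_0^{(0)}=1$, $\alpha_{2k+1}^{(0)}=0$ for $k\geq 0$, and $\alpha_{2k}^{(0)} = (-1)^k\bigl(q^{2k^2-k}+q^{2k^2+k}\bigr)$ for $k\geq 1$.

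The crux of the proof is then to verify that this $(\alpha_n^{(0)}, \beta_n^{(0)})$ is a Bailey pair relative to~$1$. The vanishing of the odd-indexed $\alpha^{(0)}$'s together with the pentagonal-type exponents $2k^2\pm k$ on the even-indexed ones strongly suggests that the seed arises by a mod-$2$ dissection from a classical Bailey pair. As a consistency check, substituting $q\to q^2$ and $z\to q^{-1}$ in~\eqref{eqn:jtp} yields $\sum_k \alpha_k^{(0)} = \sum_{k\in\ZZ}(-1)^k q^{2k^2-k} = \bigl(q,q^3,q^4;q^4\bigr)_\infty = (q)_\infty(-q^2;q^2)_\infty$, and dividing by $(q)_\infty^2$ recovers $\lim_{n\to\infty}\beta_n^{(0)} = (-q^2;q^2)_\infty/(q)_\infty$, which matches the Bailey limit formula~\eqref{Baileylimit}. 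I would complete the verification either by identifying the seed pair in Slater's compendium (among the pairs relative to~$1$), or by splitting the defining Bailey sum for $\beta_n^{(0)}$ by the parity of the summation index and evaluating each piece via the Jacobi triple product~\eqref{eqn:jtp}.

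The main obstacle is pinpointing the right classical pair or carrying out the parity-split calculation cleanly. Once the seed pair rel~$1$ is in hand, the claim of Lemma~\ref{newpair} follows immediately from Lemma~\ref{1toqlemma}.
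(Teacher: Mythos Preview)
Your overall strategy---apply Lemma~\ref{1toqlemma} with $x=1$ to a seed pair relative to~$1$---is exactly what the paper does. The difference is that you take $b=-1$ where the paper takes $b=0$, and this is precisely where your argument stalls.

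With $b=0$ the factor $(b)_n/(bq)_n$ collapses to $1$, so $\beta_n'=\beta_n$. The seed pair then has $\beta_n^{(0)}=\dfrac{(-1;q^2)_n}{(q)_{2n}}$ itself, which is a pair already tabulated by Slater (the paper cites it). On the $\alpha$-side one is left with showing $\sum_{j=0}^n q^{-j^2}A_j=(-1)^{\binom{n}{2}}q^{-\binom{n+1}{2}}$, a one-line induction.

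With your choice $b=-1$ you are forced into the modified seed
\[
\beta_n^{(0)}=\frac{1+q^n}{2}\cdot\frac{(-1;q^2)_n}{(q)_{2n}},
\]
and this pair is \emph{not} among Slater's tabulated pairs relative to~$1$, so your first proposed completion does not pan out. Your second proposal---split the finite sum $\sum_{k=0}^n \alpha_k^{(0)}/\bigl((q)_{n-k}(q)_{n+k}\bigr)$ by parity and evaluate via the Jacobi triple product---cannot work as stated: the Bailey defining relation is a finite sum in $k$ for each fixed $n$, whereas \eqref{eqn:jtp} is a bilateral infinite series identity. The $n\to\infty$ limit you checked via \eqref{Baileylimit} is a useful consistency test but is far from sufficient; it only captures one linear constraint among infinitely many.

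So the gap is real: you have correctly reverse-engineered what the seed must be, but have not verified that it is a Bailey pair, and neither of your proposed verifications goes through. The cleanest repair is to switch to $b=0$, which keeps $\beta_n$ unchanged and lands you directly on the paper's argument. (Alternatively, one could try to prove that the transformation in Lemma~\ref{1toqlemma} is invertible in a way that preserves the Bailey property, thereby deducing your seed from the known target---but that is more work than simply choosing $b=0$.)
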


\begin{proof}
	We start with the Bailey pair relative to $1$ \cite[p.~470, line 7]{Sl1},
	\begin{equation*}
		A_n =
		\begin{cases}
			1 &\text{if $n=0$}, \\
			(-1)^rq^{2r^2}(q^r + q^{-r}) &\text{if $n = 2r, r >0$},\\
			(-1)^rq^{2r^2}\bigl(q^r - q^{3r+1}\bigr) &\text{if $n = 2r+1$}
		\end{cases}
	\qquad
	\text{and}
	\qquad
		B_n = \frac{\bigl(-1;q^2\bigr)_n}{(q)_{2n}}.
	\end{equation*}
	Using the case $b=0$ of Lemma \ref{1toqlemma}, we have that $(\alpha_n,\beta_n)$ is a Bailey pair relative to $q$, where~${\beta_n = B_n}$ and
	\begin{equation*}
		\alpha_n = \frac{\bigl(1-q^{2n+1}\bigr)q^{n^2}}{1-q} \sum_{j=0}^n q^{-j^2}A_j.
	\end{equation*}
	To prove the lemma, then it suffices to show that for all $n$ we have
	\begin{equation*}
		\sum_{j=0}^n q^{-j^2}A_j = (-1)^{\binom{n}{2}} q^{-\binom{n+1}{2}}.
	\end{equation*}
	This is easily established by mathematical induction.
\end{proof}

\begin{proof}[Proof of Theorem \ref{strangesignthm}]
	Corresponding to the $\alpha_n$ in the Bailey pair in Lemma \ref{newpair}, we have
	\begin{align} \label{eqn:fn3}
		f_n = (-1)^{\binom{n}{2}}q^{\binom{n}{2}},
	\end{align}	
	which can again be seen to satisfy \eqref{eqn:fcond}.
	We obtain the product using \eqref{eqn:jtp}
	\begin{align*}
		\sum_{n\in\ZZ}q^{m\binom{n}{2}+an}f_n =
		\sum_{n\in\ZZ}(-1)^{\binom{n}{2}}q^{\frac{m+1}{2}n^2+\frac{2a-m-1}{2}n}
		=\bigl(-q^{a} , q^{m+1-a} , -q^{m+1};-q^{m+1}\bigr)_\infty.
	\end{align*}
	Here, while invoking \eqref{eqn:jtp} with
	$q\mapsto {\rm i}q^{\frac{m+1}{2}}$ and $z\mapsto {\rm i}q^{\frac{2a-m-1}{2}}$,
	we use that
	\begin{align*}
		(-1)^{n}{\rm i}^{n+n^2}=(-1)^{n+\frac{n^2+n}{2}}
	=(-1)^{\binom{n}{2}}.
\tag*{\qed}
\end{align*}
\renewcommand{\qed}{}
\end{proof}

The final result in this section collects three further applications of Theorem \ref{thm:main}, where the infinite product now arises from the quintuple product identity \eqref{eqn:qtp} instead of the triple product identity.

\begin{Theorem}
\label{thm:quintuple}
For $m\geq 1$ and $0\leq a\leq m$, we have the following identities:
\begin{gather}
		\sum_{n_m, \dots, n_1 \geq 0} \frac{q^{\binom{n_m + 1}{2} + \cdots + \binom{n_1+1}{2}}}{(q)_{n_m}\bigl(q;q^2\bigr)_{n_1 + \delta_{a,0}}} \prod_{i=1}^{m-1}\begin{bmatrix} n_{i+1} + \delta_{a,i} \\ n_i \end{bmatrix} \nonumber\\
		\qquad{}= \dfrac{(-q)_\infty}{(q)_\infty}\bigl(q^{m+1-a},q^{2m+3+a},q^{3m+4};q^{3m+4}\bigr)_{\infty}\bigl(q^{m+2+2a},q^{5m+6-2a};q^{6m+8}\bigr)_{\infty},
\label{eqn:quintex1} \\
		\sum_{n_m,\dots ,n_1 \geq 0} \frac{q^{\binom{n_m + 1}{2} + \cdots + \binom{n_1+1}{2}+ (n_1+\delta_{a,0})^2-(n_1+\delta_{a,0})}}{(q)_{n_m} \bigl(q;q^2\bigr)_{n_1 + \delta_{a,0}}} \prod_{i=1}^{m-1} 
		\begin{bmatrix} n_{i+1} + \delta_{a,i} \\ n_i \end{bmatrix}
		\nonumber \\
		\qquad{}=
		\dfrac{(-q)_\infty}{(q)_\infty}
		\bigl(q^{m+1-a}, q^{2m+1+a}, q^{3m+2} ; q^{3m+2}\bigr)_\infty \bigl(q^{m+2a},q^{-2a+5m+4} ;q^{6m+4}\bigr)_\infty,
\label{eqn:quintex2} \\
		\sum_{n_m,\dots ,n_1 \geq 0}
		\frac{q^{\binom{n_m + 1}{2} + \cdots + \binom{n_1+1}{2}}\bigl(-1;q^3\bigr)_{n_1+\delta_{a,0}}}
		{(q)_{n_m} \bigl(q;q^2\bigr)_{n_1 + \delta_{a,0}}(-1;q)_{n_1+\delta_{a,0}}} \prod_{i=1}^{m-1} 
		\begin{bmatrix} n_{i+1} + \delta_{a,i} \\ n_i \end{bmatrix} \nonumber\\
		\qquad{}=
		\dfrac{(-q)_\infty}{(q)_\infty}
		\bigl(q^{m+1-a}, q^{2m+2+a}, q^{3m+3} ; q^{3m+3}\bigr)_\infty \bigl(q^{m+2a+1},q^{-2a+5m+5} ;q^{6m+6}\bigr)_\infty.
\label{eqn:quintex3}
\end{gather}

\end{Theorem}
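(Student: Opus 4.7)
The three identities all follow from Theorem \ref{thm:main} by the same procedure used in the proofs of Theorems \ref{CLSXYthm}, \ref{q^2thm}, and \ref{strangesignthm}. The only new ingredient is that the sum $\sum_{n\in\ZZ}q^{m\binom{n}{2}+an}f_n$ is now evaluated in closed form via the quintuple product identity \eqref{eqn:qtp} rather than the Jacobi triple product \eqref{eqn:jtp}.

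Inspecting the three left-hand sides, the Bailey pairs relative to $q$ that we need have, respectively,
\begin{align*}
	\beta_n=\dfrac{1}{(q)_{2n}},\qquad \beta_n=\dfrac{q^{n^2-n}}{(q)_{2n}},\qquad \beta_n=\dfrac{\bigl(-1;q^3\bigr)_n}{(q)_{2n}(-1;q)_n}.
\end{align*}
The first is a classical pair appearing in Slater's tables; the second should be derivable from the first by applying Lemma \ref{1toqlemma} with a suitable parameter (which effectively shifts the quadratic exponent appearing in $f_n$). For each of these two cases I would set $f_n = (1-q)\alpha_n/\bigl(1-q^{2n+1}\bigr)$, check by direct computation that $f_n$ vanishes on one residue class modulo $3$ and has the appropriate alternating sign on the other two so that the symmetry \eqref{eqn:fcond} holds automatically, and then split the sum $\sum_{n\in\ZZ}q^{m\binom{n}{2}+an}f_n$ according to $n\bmod 3$. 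A short exponent match pins down the substitutions $(q_0,z_0)=\bigl(q^{3m+4},q^{-(m+1-a)}\bigr)$ in \eqref{eqn:qtp} for \eqref{eqn:quintex1} and $(q_0,z_0)=\bigl(q^{3m+2},q^{-(m+1-a)}\bigr)$ for \eqref{eqn:quintex2}, in each case producing exactly the stated infinite product.

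The third Bailey pair, needed for \eqref{eqn:quintex3}, is less standard and will be the main obstacle. My plan is to construct it along the lines of Lemma \ref{newpair}: start from a Bailey pair relative to $1$ whose $\beta_n$ involves $\bigl(-1;q^3\bigr)_n/(-1;q)_n$ and whose $\alpha_n$ is given by an explicit finite sum, verify by induction that this sum telescopes to a closed form, and then transport the result to a Bailey pair relative to $q$ via Lemma \ref{1toqlemma} with $b\mapsto 0$. The resulting $f_n$ should again vanish on one residue class mod $3$, and the quintuple product identity with $(q_0,z_0)=\bigl(q^{3m+3},q^{-(m+1-a)}\bigr)$ then yields \eqref{eqn:quintex3}.

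In each case, once the Bailey pair is in hand, the remaining bookkeeping --- splitting the sum into residue classes mod $3$, performing an index shift analogous to the one at the end of the proof of Theorem \ref{thm:main}, and reading off the infinite product from \eqref{eqn:qtp} --- is mechanical. The substantive work lies entirely in producing the three Bailey pairs and confirming that their $\alpha_n$'s have the precise three-term structure on which the quintuple product identity feeds.
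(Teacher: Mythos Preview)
Your overall strategy is exactly the paper's: plug the three Bailey pairs with
\[
\beta_n=\dfrac{1}{(q)_{2n}},\qquad \beta_n=\dfrac{q^{n^2-n}}{(q)_{2n}},\qquad \beta_n=\dfrac{(-1;q^3)_n}{(q)_{2n}(-1;q)_n}
\]
into Theorem~\ref{thm:main} and evaluate $\sum_{n\in\ZZ}q^{m\binom{n}{2}+an}f_n$ via the quintuple product identity~\eqref{eqn:qtp}, with your substitutions $(z,q)\mapsto\bigl(q^{-(m+1-a)},q^{3m+t}\bigr)$ for $t=4,2,3$ matching the paper's.

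Where you diverge is in the provenance of the Bailey pairs. The paper does not construct any of them; it simply cites them. The first two are in Warnaar \cite[equation~(4.6) and just before Theorem~4.1]{Wa1}, and the third is \cite[P(6)]{McLS}. The paper also notes that the third pair can be obtained from \cite[P(1)]{McLS} (a pair relative to $1$ with the same $\beta_n$) via Lemma~\ref{1toqlemma}, which is close in spirit to your plan but with the starting pair already given rather than built by a telescoping argument \`a la Lemma~\ref{newpair}. The paper packages all three cases uniformly by observing that the corresponding $f_n$ all have the common shape
\[
f_n=\begin{cases}
q^{\frac{t}{3}\binom{n+1}{2}-n} & n\equiv 0\pmod 3,\\
0 & n\equiv 1\pmod 3,\\
-q^{\frac{t}{3}\binom{n+1}{2}-n} & n\equiv 2\pmod 3,
\end{cases}
\]
for $t=4,2,3$, which makes the verification of~\eqref{eqn:fcond} and the application of~\eqref{eqn:qtp} a single computation rather than three.

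One caution: your proposed route from the first $\beta_n$ to the second via Lemma~\ref{1toqlemma} does not work as stated. That lemma multiplies $\beta_n$ by $(b)_n/(bq)_n$ and moves the base from $x$ to $xq$; no choice of $b$ produces the factor $q^{n^2-n}$, and in any case both pairs here are relative to $q$. It is simpler just to quote the pair from \cite{Wa1}.
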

\begin{proof}
For each of these identities, we will use a Bailey pair with relative to $q$ that has the form \eqref{eqn:fdef} with $f_n$ of the following shape for some $t$:
\begin{align} \label{eqn:fnquint}
	f_n = \begin{cases}
		q^{\frac{t}{3}\binom{n+1}{2}-n} & \text{if $n\equiv 0 \pmod{3}$},\\
		0 & \text{if $n \equiv 1 \pmod{3}$},\\
		-q^{\frac{t}{3}\binom{n+1}{2}-n} & \text{if $n \equiv 2 \pmod{3}$}.\\
	\end{cases}
\end{align}
It can be seen that this $f_n$ satisfies \eqref{eqn:fcond} for any $t$, however, for our identities, we will only require $t\in\{2,3,4\}$.

For \eqref{eqn:quintex1}, we use $t=4$, and the corresponding Bailey pair is \cite[equation~(4.6)]{Wa1}, with
\begin{equation} \label{eqn:quintbeta1}
	\beta_n = \frac{1}{(q)_{2n}}.
\end{equation}
For \eqref{eqn:quintex2}, we use $t=2$, and the Bailey pair is given right before \cite[Theorem~4.1]{Wa1}, with
\begin{equation} \label{eqn:quintbeta2}
	\beta_n = \frac{q^{n^2-n}}{(q)_{2n}}.
\end{equation}
For \eqref{eqn:quintex3}, we use $t=3$, and the Bailey pair is \cite[P(6)]{McLS}. This can also be obtained from the pair $P(1)$ of \cite{McLS}, by using Lemma \ref{1toqlemma} to retain the $\beta_n$ and change the $\alpha_n$ so that $x=q$; see also \cite{KR}. This pair has
\begin{align} \label{eqn:quintbeta3}
	\beta_n = \dfrac{\bigl(-1;q^3\bigr)_n}{(q)_{2n}(-1;q)_n}.
\end{align}

Using \eqref{eqn:quintbeta1} and \eqref{eqn:quintbeta2}, and \eqref{eqn:quintbeta3} in \eqref{eqn:mainid}, we immediately obtain the requisite sum-sides of~\eqref{eqn:quintex1}--\eqref{eqn:quintex3}. For the products, using \eqref{eqn:fnquint} we obtain
\begin{align*}
	\sum_{n\in\ZZ}
	q^{m\binom{n}{2}+an}f_n
	&{}=
	\Biggl(\sum_{\substack{n\in\ZZ\\ n\equiv 0\,\,(\mathrm{mod}\,\,3)}}
	-\sum_{\substack{n\in\ZZ\\ n\equiv 2\,\,(\mathrm{mod}\,\,3)}}\Biggr)
	q^{m\binom{n}{2}+an+\frac{t}{3}\binom{n+1}{2}-n} \\
	&{}=
	\Biggl(\sum_{\substack{n\in\ZZ\\ n\equiv 0\,\,(\mathrm{mod}\,\,3)}}
	-\sum_{\substack{n\in\ZZ\\ n\equiv 2\,\,(\mathrm{mod}\,\,3)}}\Biggr)
	q^{\frac{3m+t}{3}\binom{n+1}{2}+(-m+a-1)n},
\end{align*}
and then substituting $(z,q)\mapsto \bigl(q^{-m+a-1},q^{3m+t}\bigr)$ in \eqref{eqn:qtp} completes the proof.
\end{proof}

\begin{Remark}
The base cases of \eqref{eqn:quintex1}, corresponding to $m=1$ and $a=0,1$ are recorded with the right-hand side in the form of a theta series by Rogers \cite[p.~331, equation~(1)]{Ro1}. These same cases of \eqref{eqn:quintex2} are also given by Rogers \cite[p.~330, equation~(2)]{Ro1}.
The cases $m=1$ and $a=0,1$ of \eqref{eqn:quintex3} could be deduced by using the limiting case of the Bailey lemma (with $b=-q$, $c\rightarrow\infty$) applied to the Bailey pairs relative to $q$ given in \cite[equations~(3.6) and~(3.11)]{And1}, respectively.
\end{Remark}

\begin{Remark}
Note that the products in all three of these identities are closely related to the principal characters of standard \smash{$A_2^{(2)}$} modules. Alternate sum-sides for the same products may be found in \cite{KR}.
It would be interesting to investigate whether the sums appearing in Theorem~\ref{thm:quintuple} have any connection to the Ariki--Koike algebras or to Kleshchev multipartitions.
\end{Remark}

\section{Proof of Theorem \ref{thm:false} and applications}
\label{sec:falsetheta}

We begin this section by establishing a key Bailey lemma by following steps similar to Steps 1--6 in Section \ref{sec:mainthm} with appropriate modifications. Only slight modifications are required for $a=m$ or when $a < m-1$, but for the case $a=m-1$ we will use an analogue of the case $x=q$ of Lemma \ref{littlelemma} with $\beta_n' = \bigl(1-q^{n+1}\bigr)\beta_{n+1}$ replaced by $\beta_n' = \bigl(1+q^{n+1}\bigr)\beta_{n+1}$. This will be the third of a sequence of three lemmas. The first of these is a transcription of \cite[Lemma 3.3]{Lo1}.
\begin{Lemma} \label{falselemma1}
If $(\alpha_n,\beta_n)$ is a Bailey pair relative to $x$ with $\alpha_0 = \beta_0 = 0$, then $(\alpha_n',\beta_n')$ is a~Bailey pair relative to $xq$, where
\begin{equation*}
	\alpha_n' = \frac{1}{1-xq} \biggl( \frac{1}{1-xq^{2n+2}}\alpha_{n+1} - \frac{xq^{2n}}{1-xq^{2n}}\alpha_n \biggr)
\qquad
\text{and}
\qquad
	\beta_n' = \beta_{n+1}.
\end{equation*}
\end{Lemma}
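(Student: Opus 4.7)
The plan is to verify the defining identity $\beta_n' = \sum_{k=0}^n \alpha_k'/((q)_{n-k}(xq^2)_{n+k})$ of a Bailey pair relative to $xq$ directly. Since $\beta_n' = \beta_{n+1}$ and $(\alpha_n,\beta_n)$ is a Bailey pair relative to $x$, the left-hand side equals $\sum_{k=0}^{n+1} \alpha_k/((q)_{n+1-k}(xq)_{n+1+k})$; the hypothesis $\alpha_0 = 0$ lets me drop the $k=0$ term and, after the substitution $k = j+1$, rewrite this as $\sum_{j=0}^{n} \alpha_{j+1}/((q)_{n-j}(xq)_{n+j+2})$, which will be my target expression.

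For the right-hand side, I would substitute the definition of $\alpha_k'$ and split into two sums. The elementary simplification $(1-xq)(xq^2)_{n+k} = (xq)_{n+k+1}$ clears the $(1-xq)$ prefactor and produces clean denominators. The second sum (the one involving $\alpha_k$) effectively starts at $k=1$ by $\alpha_0=0$; reindexing it as $k = j+1$ makes each of its terms pair with a term of the first sum indexed by the same $j \in \{0,\dots,n-1\}$. Each such pair combines via the identity
\begin{align*}
\frac{1}{(q)_{n-j}(xq)_{n+j+1}} - \frac{xq^{2j+2}}{(q)_{n-j-1}(xq)_{n+j+2}} = \frac{1-xq^{2j+2}}{(q)_{n-j}(xq)_{n+j+2}},
\end{align*}
which follows by clearing denominators using $(q)_{n-j} = (1-q^{n-j})(q)_{n-j-1}$ and $(xq)_{n+j+2} = (1-xq^{n+j+2})(xq)_{n+j+1}$. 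The factor $1-xq^{2j+2}$ cancels against the $1/(1-xq^{2j+2})$ that remains from $\alpha_j'$, leaving exactly $\alpha_{j+1}/((q)_{n-j}(xq)_{n+j+2})$. The leftover $j=n$ contribution of the first sum, which has no partner from the second sum, supplies the missing term $\alpha_{n+1}/(xq)_{2n+2}$ (using $(q)_0 = 1$ and $(xq)_{2n+1}(1-xq^{2n+2}) = (xq)_{2n+2}$). The resulting expression matches the target sum precisely.

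There is no real obstacle here beyond careful index bookkeeping and the routine telescoping-style cancellation familiar from the proofs of the classical Bailey lemma and Bailey lattice (Lemmas~\ref{Baileylemma} and~\ref{Baileylatticelemma}). The only subtlety is ensuring that the hypothesis $\alpha_0 = 0$ is invoked in both the reindexing of $\beta_{n+1}$ and in starting the $\alpha_k$-sum at $k=1$; without this assumption the boundary terms fail to match. Since the statement is merely transcribed from \cite[Lemma~3.3]{Lo1}, one could equivalently simply invoke that reference.
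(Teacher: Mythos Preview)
Your proof is correct: the index shift, the partial-fraction identity, and the handling of the boundary term $j=n$ all check out, and you correctly use $\alpha_0=0$ twice (once to start the target sum at $k=1$, once to discard the $k=0$ term of the second sum, which also avoids any issue with the factor $1-xq^{2k}$ at $k=0$). The paper itself does not prove this lemma at all---it simply cites \cite[Lemma~3.3]{Lo1}---so your direct verification actually supplies more than the paper does; your closing remark already anticipates this.
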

The next lemma removes the hypothesis $\alpha_0 = \beta_0 = 0$ when $x=q$.
\begin{Lemma} \label{falselemma2}
If $(\alpha_n,\beta_n)$ is a Bailey pair relative to $q$, then $(\alpha_n',\beta_n')$ is a Bailey pair relative~to~$q^2$, where
\begin{equation*}
	\alpha_n' = \frac{1}{1-q^2} \biggl( \frac{1}{1-q^{2n+3}}\alpha_{n+1} - \frac{q^{2n+1}}{1-q^{2n+1}}\alpha_n \biggr) + \frac{\beta_0}{(q)_2} \bigl(1+q^{n+1}\bigr)(-1)^nq^{\binom{n+1}{2}}
\end{equation*}
and
\begin{equation*}
	\beta_n' = \beta_{n+1}.
\end{equation*}
\end{Lemma}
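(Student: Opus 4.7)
Proof plan. The strategy is to reduce to Lemma~\ref{falselemma1} by neutralizing the boundary obstruction at $n=0$ via an auxiliary unit Bailey pair, then reintroducing a controlled correction. Recall that every Bailey pair relative to $q$ satisfies $\alpha_0=\beta_0$ (from the Bailey relation at $n=0$), and consider the trivial Bailey pair $(\widehat\alpha_n,\widehat\beta_n)$ relative to $q$ with $\widehat\alpha_n=\delta_{n,0}$ and $\widehat\beta_n=1/((q)_n(q^2)_n)$, whose Bailey pair property is immediate from the defining relation. Subtracting $\beta_0$ times this pair from $(\alpha_n,\beta_n)$ yields, by linearity, a Bailey pair $(\alpha_n-\beta_0\delta_{n,0},\beta_n-\beta_0\widehat\beta_n)$ relative to $q$ that satisfies the hypothesis $\alpha_0=\beta_0=0$ of Lemma~\ref{falselemma1}.

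Applying Lemma~\ref{falselemma1} with $x=q$ to this modified pair produces a Bailey pair $(\tilde\alpha_n,\tilde\beta_n)$ relative to $q^2$ with $\tilde\beta_n=\beta_{n+1}-\beta_0\widehat\beta_{n+1}$. To restore the target $\beta_n'=\beta_{n+1}$, I would add back a Bailey pair of the form $(Z_n,\beta_0\widehat\beta_{n+1})$ relative to $q^2$. A short computation using $\alpha_0=\beta_0$ and the factorization $(1-q^2)(1-q)=(q)_2$ shows that the total $\alpha_n'=\tilde\alpha_n+Z_n$ matches the formula in the statement precisely when $Z_0=\beta_0/(q)_2$ and $Z_n=\beta_0(1+q^{n+1})(-1)^nq^{\binom{n+1}{2}}/(q)_2$ for $n\geq 1$; the closed-form expression in the lemma absorbs this piecewise adjustment at $n=0$ into the boundary behavior of the main formula (the $-q^{2n+1}\alpha_n/(1-q^{2n+1})$ term at $n=0$ contributes $-q\beta_0/(q)_2$, which combines with the stated correction $\beta_0(1+q)/(q)_2$ to give exactly $\beta_0/(q)_2$).

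The principal technical point is to verify that $(Z_n,\beta_0\widehat\beta_{n+1})$ is indeed a Bailey pair relative to $q^2$, i.e., the finite identity
\begin{equation*}
\frac{1}{(q)_{n+1}(q^2)_{n+1}}=\sum_{k=0}^n\frac{Z_k/\beta_0}{(q)_{n-k}(q^3)_{n+k}}.
\end{equation*}
The base case $n=0$ reduces to $1/(q)_2=1/(q)_2$, and I expect to prove the general case by induction on $n$: isolating the $k=n$ term and rewriting $(1-q^{2n+2})/(1-q^{n+1})=1+q^{n+1}$ to regroup it against the boundary factors in $(q^3)_{2n}$ exhibits a telescoping collapse onto the inductive hypothesis. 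The main obstacle is executing this regrouping cleanly, though it is a finite and essentially combinatorial matter; an alternative route, which I would check in parallel, is to verify the full lemma by direct substitution into the Bailey relation $\beta_{n+1}=\sum_{k=0}^n\alpha_k'/((q)_{n-k}(q^3)_{n+k})$, splitting the sum into a main piece (handled exactly as in the proof of Lemma~\ref{falselemma1}) and a correction piece (which equals $\beta_0\widehat\beta_{n+1}$ by the identity above). Once established, linearity of the Bailey relation assembles the two constituent pairs into the desired Bailey pair $(\alpha_n',\beta_{n+1})$ relative to $q^2$, completing the proof.
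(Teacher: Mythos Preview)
Your approach is correct in outline but takes a longer route than the paper's, and the detour is entirely a consequence of which auxiliary Bailey pair you subtract. You choose the ``trivial'' pair $(\widehat\alpha_n,\widehat\beta_n)=(\delta_{n,0},\,1/((q)_n(q^2)_n))$, which kills $\alpha_0$ but leaves a nontrivial residue on the $\beta$ side; after applying Lemma~\ref{falselemma1} you therefore have to add back a correction pair $(Z_n,\beta_0\widehat\beta_{n+1})$ relative to $q^2$ and verify separately that it is a Bailey pair. The paper instead subtracts the \emph{unit Bailey pair} $(A_n,B_n)$ with
\[
A_n=\frac{1-q^{2n+1}}{1-q}(-1)^nq^{\binom{n}{2}},\qquad B_n=\delta_{n,0},
\]
so that $\beta_n-\beta_0 B_n=\beta_n$ for all $n\ge1$. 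After Lemma~\ref{falselemma1} one already has $\beta_n'=\beta_{n+1}$, and the entire correction lives on the $\alpha$ side where it can be read off by a two-line computation. In short: you put the Kronecker delta on $\alpha$, the paper puts it on $\beta$, and the latter choice eliminates your Step~4 altogether.

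Your Step~4 is the one place where the argument is not yet a proof. The sketch ``isolate the $k=n$ term and telescope'' is not how such finite Bailey-type sums behave under $n\mapsto n+1$: every term changes, not just the last, so a naive induction will not collapse cleanly. The identity you need is true, but the quickest way to see it is to run your own idea a second time: observe that $(\delta_{n,0}-A_n,\ \widehat\beta_n-\delta_{n,0})$ is a Bailey pair relative to $q$ with vanishing zeroth entries, apply Lemma~\ref{falselemma1} with $x=q$ to it, and check that the output is exactly $(Z_n/\beta_0,\ \widehat\beta_{n+1})$. That verification is a short calculation (use $A_n/(1-q^{2n+1})=(-1)^nq^{\binom{n}{2}}/(1-q)$ and $\binom{n+1}{2}=\binom{n}{2}+n$) and replaces the induction entirely. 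Once you notice this, though, you have effectively rediscovered the paper's shortcut: subtracting $A_n$ from the start would have saved both passes.
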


\begin{proof}
Recall the unit Bailey pair relative to $q$ \cite[equations~(2.12) and (2.13), $a \mapsto q$]{Andmultiple}:
\begin{equation*}
	A_n = \frac{1-q^{2n+1}}{1-q}(-1)^nq^{\binom{n}{2}}
\qquad
\text{and}
\qquad
	B_n = \delta_{n,0}.
\end{equation*}
Using the linearity of Bailey pairs, we have that $(\alpha_n - A_n\beta_0,\beta_n - B_n\beta_0)$ is a Bailey pair relative to $q$ satisfying the hypothesis of Lemma \ref{falselemma1}. A short computation then gives the result.
\end{proof}

The third lemma is the necessary lemma for our proof of the case $a=m-1$.
\begin{Lemma} \label{falselemma3}
If $(\alpha_n,\beta_n)$ is a Bailey pair relative to $q$, then $(\alpha_n',\beta_n')$ is a Bailey pair relative~to~$q^2$, where
\begin{equation*} 
	\alpha_n' = \frac{1}{1-q^2} \biggl( \frac{1+q^{n+1}}{1-q^{2n+3}}\alpha_{n+1} - \frac{q^n\bigl(1+q^{n+1}\bigr)}{1-q^{2n+1}}\alpha_n \biggr) + \frac{2\beta_0}{(q)_2} \bigl(1+q^{n+1}\bigr)(-1)^nq^{\binom{n+1}{2}}
\end{equation*}
and
\begin{equation*} 
	\beta_n' = \bigl(1+q^{n+1}\bigr)\beta_{n+1}.
\end{equation*}
\end{Lemma}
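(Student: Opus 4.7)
The plan is to derive Lemma \ref{falselemma3} by linearly combining the outputs of Lemma \ref{littlelemma} (specialized to $x=q$) and Lemma \ref{falselemma2}, both of which take a Bailey pair relative to $q$ and produce one relative to $q^2$. The guiding observation is the elementary identity
\begin{equation*}
(1+q^{n+1})\beta_{n+1} = 2\beta_{n+1} - (1-q^{n+1})\beta_{n+1},
\end{equation*}
which tells us exactly how to combine the $\beta$-sides of the two lemmas.

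First, I would apply Lemma \ref{falselemma2} to $(\alpha_n,\beta_n)$ to obtain a Bailey pair $\bigl(\alpha_n^{(2)},\beta_{n+1}\bigr)$ relative to $q^2$, where $\alpha_n^{(2)}$ is the right-hand side displayed in Lemma \ref{falselemma2}; in particular it carries the $\beta_0$-correction $\frac{\beta_0}{(q)_2}(1+q^{n+1})(-1)^n q^{\binom{n+1}{2}}$. Next, I would apply Lemma \ref{littlelemma} with $x\mapsto q$ to the same seed pair, yielding a Bailey pair $\bigl(\alpha_n^{(1)},(1-q^{n+1})\beta_{n+1}\bigr)$ relative to $q^2$ with
\begin{equation*}
\alpha_n^{(1)} = \frac{1}{1-q^2}\biggl(\frac{1-q^{n+1}}{1-q^{2n+3}}\alpha_{n+1} + \frac{q^n(1-q^{n+1})}{1-q^{2n+1}}\alpha_n\biggr).
\end{equation*}
Since the Bailey pair relation is linear in $(\alpha,\beta)$, the pair $\bigl(2\alpha_n^{(2)}-\alpha_n^{(1)},\,(1+q^{n+1})\beta_{n+1}\bigr)$ is again a Bailey pair relative to $q^2$, which is already the desired $\beta$-side.

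It then remains to check that $2\alpha_n^{(2)}-\alpha_n^{(1)}$ matches the $\alpha_n'$ claimed in Lemma \ref{falselemma3}. The $\beta_0$-term of $\alpha_n^{(2)}$ simply doubles, producing exactly the $\tfrac{2\beta_0}{(q)_2}(1+q^{n+1})(-1)^n q^{\binom{n+1}{2}}$ summand. For the part depending on $(\alpha_n,\alpha_{n+1})$, the coefficient of $\alpha_{n+1}/(1-q^{2n+3})$ collapses via $2-(1-q^{n+1}) = 1+q^{n+1}$, and the coefficient of $-\alpha_n/(1-q^{2n+1})$ collapses via $2q^{2n+1}+q^n(1-q^{n+1}) = q^n(1+q^{n+1})$. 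These two one-line simplifications are the only algebraic content beyond appealing to the earlier lemmas, so there is no genuine obstacle; the substantive work has already been done in establishing Lemmas \ref{littlelemma} and \ref{falselemma2}.
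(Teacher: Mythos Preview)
Your proof is correct and is essentially identical to the paper's own argument: the paper also obtains the result by taking twice the Bailey pair of Lemma~\ref{falselemma2} minus the Bailey pair of Lemma~\ref{littlelemma} with $x=q$, and the algebraic simplifications you wrote out are the same as those implicit in the paper's one-line proof.
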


\begin{proof}
The result follows from taking the negative of the Bailey pair in Lemma \ref{littlelemma} with $x=q$ plus twice the Bailey pair in Lemma \ref{falselemma2}.
\end{proof}

We are now ready to prove the key result on Bailey pairs.
\begin{Lemma} \label{lemma:falsepair}
Let $m \geq 1$ and $0 \leq a \leq m$. Suppose $(\alpha_n,\beta_n)$ is a Bailey pair relative to $q$ and let~$f_n$ be as in Lemma $\ref{lemma:mainpair}$. Then $(\alpha_n',\beta_n')$ is a Bailey pair relative to $q$, where
\begin{equation*} 
	\alpha_n' =
	\begin{cases}
		\dfrac{1}{1-q}(-1)^nq^{m\binom{n+1}{2}}\bigl(
		q^{a(n+1)}f_{n+1} -q^{-an+2n-1}f_{n-1}
		\bigr) & \text{if $0 \leq a \leq m-1$}, \\
		\dfrac{1}{1-q}(-1)^nq^{m \binom{n+1}{2}}\bigl(1-q^{2n+1}\bigr)f_n &\text{if $a=m$},
	\end{cases}
\end{equation*}
and
\begin{align}
		\beta_n' &{}= \beta_{n_{m+1}}' \label{eqn:falsepairbeta}\\
 &{}= \frac{1}{(q)_{n_{m+1}}} \sum_{n_m, \dots, n_1 \geq 0}\frac{(-1)^{n_m}q^{\binom{n_m+1}{2} + \cdots + \binom{n_1+1}{2}}\bigl(q^2;q^2\bigr)_{n_1+\delta_{a,0}}}{(q)_{n_{m+1}-n_m} (-q)_{n_m}} \beta_{n_1+\delta_{a,0}} \prod_{i=1}^{m-1} \begin{bmatrix} n_{i+1} + \delta_{a,i} \\ n_i \end{bmatrix}.
\nonumber
\end{align}
\end{Lemma}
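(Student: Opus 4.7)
The proof mirrors the six-step argument in the proof of Lemma \ref{lemma:mainpair}, making targeted substitutions that introduce a factor $(-1)^{n_m}$ into the summand and swap the outer denominator $(-q)_{n_{m+1}}$ with $(q)_{n_{m+1}}$ (along with $(q)_{n_m} \leftrightarrow (-q)_{n_m}$ at the $n_m$-level) in $\beta_n'$, while preserving the $q$-binomial coefficient structure and the interaction with $\beta_{n_1+\delta_{a,0}}$. The required modification depends on where in the six-step proof the ``outermost'' Bailey-type iteration is performed, splitting the analysis into three cases.

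When $a = m$, the modification is localized to Step 1: we perform $m - 1$ iterations of Lemma \ref{Baileylemma} with $b \mapsto -q$, $c \to \infty$ as before, and conclude with one final iteration using $b \mapsto q$, $c \to \infty$. The latter iteration contributes the factor $(-1)^n q^{\binom{n+1}{2}}$ to $\alpha$ (raising $q^{m\binom{n+1}{2}}$ to $(-1)^n q^{m\binom{n+1}{2}}$) and produces the outer denominator $(q)_n$ with summand factor $(-1)^{n_m}(q)_{n_m}q^{\binom{n_m+1}{2}}/(q)_{n_{m+1}-n_m}$. The apparent surplus $(q)_{n_m}$ is exactly what is needed to rewrite the pre-$q$-binomial expression in terms of the $q$-binomial coefficients of \eqref{eqn:falsepairbeta}, using $\bigl(q^2;q^2\bigr)_{n_1} = (q)_{n_1}(-q)_{n_1}$ to package the innermost level. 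For $0 \leq a \leq m - 2$ the identical substitution is made at the end of Step 5, namely $m - a - 2$ iterations with $b \mapsto -q$ followed by one with $b \mapsto q$, with Steps 1--4 of Lemma \ref{lemma:mainpair} left unchanged; the shifted $q$-binomial $\begin{bmatrix} n_{a+1}+1 \\ n_a \end{bmatrix}$ is still produced by Lemma \ref{littlelemma} in Step 2.

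For $a = m - 1$ there is no Step 5 to modify, so the alteration must happen earlier: in Step 2 we replace Lemma \ref{littlelemma} by Lemma \ref{falselemma3}, which supplies $\beta_n' = \bigl(1+q^{n+1}\bigr)\beta_{n+1}$ in place of $\bigl(1-q^{n+1}\bigr)\beta_{n+1}$ together with a modified $\alpha$-formula carrying an explicit $\beta_0$-correction term. Steps 3 and 4 (with the rescaling by $1+q$ in Step 3) are then carried out as in Lemma \ref{lemma:mainpair}, and the propagation of the $\bigl(1+q^{n+1}\bigr)$-factor together with the $\beta_0$-correction is what is meant to produce both the sign $(-1)^{n_m}$ in the summand and the requisite swap of denominators. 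The boundary $n = 0$ of the claimed $\alpha_n'$ is recovered using $\beta_0 = f_0 = -q^{-1}f_{-1}$ from \eqref{eqn:f-1f0}, analogously to how $\alpha_{-1} = 0$ from \eqref{eqn:latticeextracond} is used in Step 4 of the proof of Lemma \ref{lemma:mainpair}. The main obstacle will be precisely this $a = m - 1$ case: the detailed bookkeeping needed to verify that the combination of Lemma \ref{falselemma3} with the subsequent Bailey lemma and Bailey lattice lemma produces exactly \eqref{eqn:falsepairbeta} together with the stated formula for $\alpha_n'$ at all $n \geq 0$.
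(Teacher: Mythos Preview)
Your handling of the cases $a=m$ and $0\leq a\leq m-2$ is correct and matches the paper's argument exactly: in each case the single substitution $b\mapsto q$ (in place of $b\mapsto -q$) at the outermost iteration of the Bailey lemma is precisely what is needed.

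However, your treatment of $a=m-1$ has a genuine gap. You propose to replace Step~2 by Lemma~\ref{falselemma3} and then carry out Steps~3 and~4 \emph{exactly as in Lemma~\ref{lemma:mainpair}}, i.e.\ Step~3 with $x\mapsto q^2$, $b\mapsto -q^2$, $c\to\infty$. This does not work. On the $\beta$ side, that choice gives outer denominator $(-q)_{n_{m+1}}$ and a factor $(1+q^{n_m+1})$ in the summand---not $(q)_{n_{m+1}}$ and $(-1)^{n_m}$ as required by \eqref{eqn:falsepairbeta}. On the $\alpha$ side, the $(1+q^{n+1})$ from Lemma~\ref{falselemma3} combines with the $(1+q^{n+1})/(1+q)$ from $b=-q^2$ to give $(1+q^{n+1})^2$, and after the Bailey lattice step the $\beta_0$-correction term does \emph{not} cancel for $n\geq 1$; you are left with a residual term proportional to
\[
(-1)^nq^{n^2+n}\biggl(\frac{1+q^{n+1}}{1-q^{n+1}}+\frac{1+q^n}{1-q^n}\biggr),
\]
which is nonzero.

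The fix (and this is what the paper does) is to also modify Step~3 in the $a=m-1$ case: use $b\mapsto q^2$ instead of $b\mapsto -q^2$. Then Step~3 contributes $\frac{1-q^{n+1}}{1-q}(-1)^nq^{\binom{n+1}{2}}$ on the $\alpha$ side, which combines with $(1+q^{n+1})$ to give $(1-q^{2n+2})(-1)^n$; after the Bailey lattice step the $(-1)^n$ survives, and the $\beta_0$-correction becomes $\frac{2\beta_0}{1-q}q^{n^2+n}$, which telescopes to zero for $n\geq 1$. On the $\beta$ side this choice produces the outer $(q)_{n_{m+1}}$ and the factor $(-1)^{n_m}(q)_{n_m+1}/(-q)_{n_m}$, which is exactly what is needed to assemble \eqref{eqn:falsepairbeta}. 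Your observation that $\beta_0=f_0$ is then used at $n=0$, together with $q^{-1}f_{-1}=-f_0$, to verify $\alpha_0'=\frac{1}{1-q}(q^af_1+f_0)$.
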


\begin{proof}
The proof is broken up into three cases. First suppose that $a=m$. In this case, we apply Step 1 as in the proof of Lemma \ref{lemma:mainpair}, except that at the final iteration of Lemma \ref{Baileylemma} we use $b \mapsto q$ and $c \to \infty$ instead of $b \mapsto -q$ and $c \to \infty$. The resulting pair is
\begin{align*}
	\alpha_n= (-1)^nq^{m\binom{n+1}{2}}\dfrac{1-q^{2n+1}}{1-q}f_n
\end{align*}
and
\begin{align*}
	\beta_n=\beta_{n_{m+1}}=
	\dfrac{1}{(q)_{n_{m+1}}}\sum_{n_m,\dots,n_1\geq 0}
	\dfrac{(q)_{n_m}(-1)^{n_m}q^{\binom{n_m+1}{2}+\cdots+\binom{n_1+1}{2}}(-q)_{n_1}}{(-q)_{n_m}(q)_{n_{m+1}-n_m}\cdots (q)_{n_2-n_1}}\beta_{n_1}.
\end{align*}
Multiplying the numerator and denominator of the above sum by $(q)_{n_1} \cdots (q)_{n_{m-1}}$ and converting to $q$-binomial coefficients gives \eqref{eqn:falsepairbeta}.

Next assume that $a < m-1$. In this case, we follow the same steps as in the proof of Lemma~\ref{lemma:mainpair} except that in the very last iteration of Step 5 we use $b = q$ and $ c\to \infty$ instead of $b = -q$ and $c \to \infty$. The result is
\begin{equation*}
	\alpha_n =
	\dfrac{1}{1-q}(-1)^nq^{m\binom{n+1}{2}}\bigl(
	q^{a(n+1)}f_{n+1} -q^{-an+2n-1}f_{n-1}
	\bigr)
\end{equation*}
and either
\begin{equation*}
	\beta_n=\beta_{n_{m+1}}=
	\dfrac{1}{(q)_{n_{m+1}}}\sum_{n_m,\dots,n_1\geq 0}\!
	\dfrac{(q)_{n_m}(-1)^{n_m}q^{\binom{n_{m}+1}{2}+\cdots+\binom{n_1+1}{2}}\bigl(1-q^{n_{a+1}+1}\bigr)(-q)_{n_1}}
	{(-q)_{n_m}(q)_{n_{m+1}-n_{m}}\cdots
		(q)_{n_{a+1}+1-n_a}\cdots (q)_{n_2-n_1}}\beta_{n_1}
\end{equation*}
if $a > 0$, or
\begin{gather*}
	\beta_n=\beta_{n_{m+1}}=
	\dfrac{1}{(q)_{n_{m+1}}}\sum_{n_m,\dots,n_1\geq 0}\!
	\dfrac{(q)_{n_m}(-1)^{n_m}q^{\binom{n_{m}+1}{2}+\cdots+\binom{n_1+1}{2}}\bigl(1-q^{n_{1}+1}\bigr)(-q)_{n_1+1}}
	{(-q)_{n_m}(q)_{n_{m+1}-n_{m}}\cdots (q)_{n_2-n_1}}
	\beta_{n_1+1}
\end{gather*}
if $a=0$. Rewriting in terms of $q$-binomial coefficients gives the result.

Finally, we consider the case $a = m-1$. If $a > 0$, then we begin with the Bailey pair in \eqref{eqn:alphafn} and \eqref{eqn:betastep1}, then redo Steps 2--4 with modifications as follows.

{\it Step 2\emph{:}} Applying Lemma \ref{falselemma3}, we obtain a Bailey pair relative to $q^2$,
\begin{align*}
	\alpha_n &{}= \frac{1+q^{n+1}}{(q)_2} \bigl(q^{a\binom{n+2}{2}}f_{n+1} - q^{n + a\binom{n+1}{2}}f_n \bigr) + 2\frac{\bigl(1+q^{n+1}\bigr)}{(q)_2}(-1)^nq^{\binom{n+1}{2}} \\
	&{}= \frac{1+q^{n+1}}{(q)_2}q^{(a+2)\binom{n+1}{2} - n^2} \bigl(-f_n + q^{a(n+1)-n}f_{n+1} \bigr) + 2\frac{\bigl(1+q^{n+1}\bigr)}{(q)_2}(-1)^nq^{\binom{n+1}{2}}
\end{align*}
and
\begin{equation*}
	\beta_n=\beta_{n_{a+1}}=
	\dfrac{1}{(-q)_{n_{a+1}}}\sum_{n_a,\dots,n_1\geq 0}
	\dfrac{q^{\binom{n_a+1}{2}+\cdots+\binom{n_1+1}{2}}(-q)_{n_1}}{(q)_{n_{a+1}+1-n_a}\cdots (q)_{n_2-n_1}}\beta_{n_1}.
\end{equation*}

{\it Step 3\emph{:}} Using Lemma \ref{Baileylemma} with $x \mapsto q^2$, $b \mapsto q^2$ and $c \to \infty$, we obtain another Bailey pair relative to $q^2$,
\begin{equation*}
	\alpha_n = \frac{1-q^{2n+2}}{(1-q)(q)_2}(-1)^nq^{(a+3)\binom{n+1}{2} - n^2} \bigl(-f_n + q^{a(n+1)-n}f_{n+1} \bigr) + 2\frac{\bigl(1-q^{2n+2}\bigr)}{(1-q)(q)_2}q^{n^2+n}
\end{equation*}
and
\begin{gather*}
	\beta_n=\beta_{n_{a+2}}=
	\dfrac{1}{(1-q)(q)_{n_{a+2}}}\sum_{n_{a+1},\dots,n_1\geq 0}\!
	\dfrac{(-1)^{n_{a+1}}(q)_{n_{a+1}+1}q^{\binom{n_{a+1}+1}{2}+\cdots+\binom{n_1+1}{2}}(-q)_{n_1}}{(-q)_{n_{a+1}}(q)_{n_{a+2} - n_{a+1}}(q)_{n_{a+1}+1-n_a}\cdots (q)_{n_2-n_1}}\beta_{n_1}.
\end{gather*}
The factor $(1-q)$ cancels.

{\it Step 4\emph{:}} Use Lemma \ref{Baileylatticelemma} with $x \mapsto q^2$ and $b,c \mapsto q$ to obtain a Bailey pair relative to $q$. This does not change the $\beta_n$, which, after rewriting in terms of $q$-binomial coefficients, coincides with~\eqref{eqn:falsepairbeta} for $a = m-1$ and $a>0$. As for the $\alpha_n$, for $n > 0$ we have
\begin{gather*}
	\alpha_n = \frac{1}{1-q}\bigl((-1)^nq^{(a+3)\binom{n+1}{2} - n^2}\bigl(-f_n + q^{a(n+1)-n}f_{n+1}\bigr)\bigr) + \frac{2q^{n^2+n}}{1-q} \\
	\hphantom{\alpha_n =}{} - \frac{1}{1-q}\bigl((-1)^{n-1}q^{2n+(a+3)\binom{n}{2} - (n-1)^2}\bigl(-f_{n-1} + q^{an-(n-1)}f_n\bigr)\bigr) - \frac{2q^{2n}q^{n^2-n}}{1-q}\\
	\hphantom{\alpha_n}{} = \frac{1}{1-q}\bigl((-1)^nq^{(a+1)\binom{n+1}{2}}\bigl(q^{a(n+1)}f_{n+1} - q^{-an+2n-1}f_{n-1}\bigr)\bigr).
\end{gather*}
Using \eqref{eqn:latticeextracond} and \eqref{eqn:f-1f0}, the above is also valid for $n=0$. This completes the proof except for ${m=1}$, ${a=0}$. For this case, the argument is similar. One applies Steps 2--4 beginning with the Bailey pair $(\alpha_n,\beta_n)$ in the statement of the lemma.
\end{proof}

With the Bailey pair from Lemma \ref{lemma:falsepair} in hand, we are now ready to deduce Theorem \ref{thm:false}.

\begin{proof}[Proof of Theorem \ref{thm:false}]
Let $(\alpha_n,\beta_n)$ be a Bailey pair relative to $q$ satisfying \eqref{eqn:fdeflater} and \eqref{eqn:f-1f0}.
Applying \eqref{Baileylimit} to the Bailey pair in Lemma~\ref{lemma:falsepair}, we obtain
\begin{gather}
		\sum_{n_m, \dots, n_1 \geq 0} \frac{(-1)^{n_m}q^{\binom{n_m+1}{2} + \cdots + \binom{n_1+1}{2}}\bigl(q^2;q^2\bigr)_{n_1+\delta_{a,0}}}{(-q)_{n_m}} \beta_{n_1+\delta_{a,0}} \prod_{i=1}^{m-1} \begin{bmatrix} n_{i+1} + \delta_{a,i} \\ n_i \end{bmatrix}
		\nonumber\\
		\qquad{}=
		\begin{cases}
			\displaystyle\sum_{n\geq 0}
			(-1)^nq^{m\binom{n+1}{2}}\bigl(
			q^{a(n+1)}f_{n+1} -q^{-an+2n-1}f_{n-1}\bigr)
			& \text{if $0\leq a \leq m-1$},\vspace{1mm}\\
			\displaystyle\sum_{n\geq 0}
			(-1)^nq^{m\binom{n+1}{2}}\bigl(
			1-q^{2n+1}\bigr)f_n
			& \text{if $a=m$}.
		\end{cases}\label{eqn:falseintermediate}
\end{gather}
If $f_n$ also satisfies \eqref{eqn:fcond}, then a short computation as in the proof of Theorem~\ref{thm:main} converts the right-hand side of \eqref{eqn:falseintermediate} into the right-hand side of~\eqref{eqn:falseid}.
\end{proof}

Using Theorem \ref{thm:false}, we obtain false theta companions for each family of identities established in Section \ref{sec:mainthm}. The proofs use the same Bailey pairs but in \eqref{eqn:falseid} instead of \eqref{eqn:mainid}. For example, as noted in the introduction, Theorem \ref{thm:falseCLSXY} follows upon inserting \eqref{seed1alpha} and \eqref{seed1beta} in \eqref{eqn:falseid}.

The false theta companions to \eqref{q^2eq} and \eqref{strangesignex}, obtained using the Bailey pairs corresponding to \eqref{eqn:fn2} and \eqref{eqn:fn3} in Theorem \ref{thm:false} are collected in the following two results.
\begin{Theorem} \label{thm:falseq^2}
For $m \geq 1$ and $0 \leq a \leq m$, we have
\begin{gather*}
		\sum_{n_m,\dots ,n_1 \geq 0} \frac{(-1)^{n_m}q^{n_m^2+n_m + \cdots + n_1^2+n_1}}{\bigl(-q^2;q^2\bigr)_{n_m} \bigl(-q;q^2\bigr)_{n_1 + \delta_{a,0}}} \prod_{i=1}^{m-1} \begin{bmatrix} n_{i+1} + \delta_{a,i} \\ n_i \end{bmatrix}_{q^2} \\
		\qquad{}=
		\begin{cases}
			\displaystyle \sum_{n \in \mathbb{Z}} {\rm sgn}(-n)q^{m(n^2-n) + n(3n-1)/2 +2an} &\text{if $0 \leq a \leq m-1$}, \vspace{1mm}\\
			\displaystyle \sum_{n \in \mathbb{Z}} {\rm sgn}(n)q^{m(n^2+n) + n(3n-1)/2} & \text{if $a=m$.}
		\end{cases}
\end{gather*}
\end{Theorem}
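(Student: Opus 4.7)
The plan is to mimic the proof of Theorem \ref{q^2thm} essentially verbatim, substituting the application of Theorem \ref{thm:main} with an application of Theorem \ref{thm:false}. Concretely, I would take the same Bailey pair relative to $q$ from \cite[equation~(4.4)]{Wa1}, namely
\begin{align*}
f_n = (-1)^n q^{\frac{1}{4}n(3n-1)}, \qquad \beta_n = \frac{1}{(q^2;q^2)_n(-q^{1/2};q)_n},
\end{align*}
which was already shown to satisfy \eqref{eqn:fcond}. Feeding this into \eqref{eqn:falseid} and then performing the dilation $q \mapsto q^{1/2}$ (exactly as in Theorem \ref{q^2thm}) should deliver both sides of the claimed identity.

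For the sum-side, after inserting $\beta_{n_1+\delta_{a,0}}=\bigl((q^2;q^2)_{n_1+\delta_{a,0}}(-q^{1/2};q)_{n_1+\delta_{a,0}}\bigr)^{-1}$ into the left-hand side of \eqref{eqn:falseid}, the $(q^2;q^2)_{n_1+\delta_{a,0}}$ cancels, and then $q\mapsto q^{1/2}$ turns $(-q)_{n_m}$ into $(-q^2;q^2)_{n_m}$ and $(-q^{1/2};q)_{n_1+\delta_{a,0}}$ into $(-q;q^2)_{n_1+\delta_{a,0}}$; the exponent $\binom{n_i+1}{2}$ becomes $n_i^2+n_i$ after dilation, and the $q$-binomial coefficients pick up the base $q^2$. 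This matches the sum-side of Theorem \ref{thm:falseq^2} exactly.

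For the product-side, I would insert $f_n$ into the two cases of \eqref{eqn:falseid}. For $0\leq a\leq m-1$, the $(-1)^n$ from the formula cancels the $(-1)^n$ from $f_n$, leaving
\begin{align*}
\sum_{n\in\ZZ}{\rm sgn}(-n)\,q^{m\binom{n}{2}+an+\frac{1}{4}n(3n-1)}.
\end{align*}
Applying $q\mapsto q^{1/2}$ multiplies every exponent by $2$, producing exactly $q^{m(n^2-n)+n(3n-1)/2+2an}$. The case $a=m$ proceeds identically with $\binom{n}{2}$ replaced by $\binom{n+1}{2}$ and ${\rm sgn}(-n)$ replaced by ${\rm sgn}(n)$, yielding $q^{m(n^2+n)+n(3n-1)/2}$ after the dilation.

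No genuine obstacle is expected: every ingredient (the Bailey pair, the verification of \eqref{eqn:fcond}, and the dilation argument) was already carried out in Section \ref{sec:mainthm}; the only novelty is swapping Theorem \ref{thm:main} for Theorem \ref{thm:false}. The mildly delicate point is simply keeping track of the sign factors $(-1)^{n_m}$ and $(-1)^n$ and verifying that the ${\rm sgn}$ functions appear on the correct side so that the indicated false theta series emerge in their stated form; this is a purely bookkeeping check.
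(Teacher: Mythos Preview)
Your proposal is correct and is exactly the paper's approach: insert the Bailey pair with $f_n=(-1)^n q^{n(3n-1)/4}$ and $\beta_n=1/\bigl((q^2;q^2)_n(-q^{1/2};q)_n\bigr)$ into Theorem~\ref{thm:false} and then dilate. The only slip is notational: the substitution that ``multiplies every exponent by $2$'' and sends $(-q)_{n_m}\to(-q^2;q^2)_{n_m}$ is $q\mapsto q^2$, not $q\mapsto q^{1/2}$; all of your concrete computations are consistent with $q\mapsto q^2$, so just correct the direction of the dilation.
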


\begin{Theorem} \label{thm:falsestrangesign}
For $m \geq 1$ and $0 \leq a \leq m$, we have
\begin{gather*}
		\sum_{n_m, \dots, n_1 \geq 0} \frac{(-1)^{n_m}q^{\binom{n_m + 1}{2} + \cdots + \binom{n_1+1}{2}}\bigl(-1;q^2\bigr)_{n_1 + \delta_{a,0}}}{(-q)_{n_m}\bigl(q;q^2\bigr)_{n_1+ \delta_{a,0}}} \prod_{i=1}^{m-1} \begin{bmatrix} n_{i+1} + \delta_{a,i} \\ n_i \end{bmatrix} \\
		\qquad{}=
		\begin{cases}
			\displaystyle \sum_{n \in \mathbb{Z}} {\rm sgn}(-n)(-1)^{\binom{n+1}{2}} q^{(m+1)\binom{n}{2} + an} & \text{if $0 \leq a \leq m-1$}, \vspace{1mm}\\
			\displaystyle \sum_{n \in \mathbb{Z}} {\rm sgn}(n)(-1)^{\binom{n+1}{2}} q^{(m+1)\binom{n+1}{2}-n} & \text{if $a=m$}.
		\end{cases}
	\end{gather*}
\end{Theorem}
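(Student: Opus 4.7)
The plan is to apply Theorem \ref{thm:false} to the Bailey pair of Lemma \ref{newpair}, mirroring the derivation of Theorem \ref{strangesignthm} from Theorem \ref{thm:main} but with \eqref{eqn:falseid} in place of \eqref{eqn:mainid}. The relevant sequence is $f_n = (-1)^{\binom{n}{2}} q^{\binom{n}{2}}$ from \eqref{eqn:fn3}, and its compatibility with \eqref{eqn:fcond} has already been checked in the proof of Theorem \ref{strangesignthm}.

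For the sum-side, I would substitute $\beta_n = \bigl(-1;q^2\bigr)_n / (q)_{2n}$ into the left-hand side of \eqref{eqn:falseid} and use the elementary identity $\bigl(q^2;q^2\bigr)_n/(q)_{2n} = 1/\bigl(q;q^2\bigr)_n$ to recover exactly the multisum appearing in the statement.

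For the product-side, the work is purely a matter of cleaning up exponents. In the case $0 \leq a \leq m-1$, plugging $f_n$ into $\sum_{n\in\ZZ} {\rm sgn}(-n)(-1)^n q^{m\binom{n}{2}+an} f_n$ and combining sign factors via $(-1)^n(-1)^{\binom{n}{2}} = (-1)^{\binom{n+1}{2}}$ (which holds because $\binom{n+1}{2} = \binom{n}{2}+n$), together with $m\binom{n}{2}+\binom{n}{2} = (m+1)\binom{n}{2}$, yields the stated right-hand side. The case $a=m$ is analogous, this time rewriting the $q$-exponent as $m\binom{n+1}{2}+\binom{n}{2} = (m+1)\binom{n+1}{2}-n$.

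There is no genuine obstacle here: the proof is a direct substitution into Theorem \ref{thm:false}, exactly parallel to how Theorem \ref{strangesignthm} was obtained by direct substitution into Theorem \ref{thm:main}.
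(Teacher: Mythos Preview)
Your proposal is correct and follows exactly the paper's approach: Theorem~\ref{thm:falsestrangesign} is obtained by inserting the Bailey pair of Lemma~\ref{newpair} (with $f_n$ as in \eqref{eqn:fn3}) into Theorem~\ref{thm:false}, precisely mirroring how Theorem~\ref{strangesignthm} came from Theorem~\ref{thm:main}. The simplifications you outline for both the sum-side and the two cases of the right-hand side are all correct.
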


Finally, we record the false theta companions to the identities in Theorem \ref{thm:quintuple}. To keep the expressions concise, we use the Legendre symbol
\begin{equation*}
\Bigl(\frac{n}{3}\Bigr) =
\begin{cases}
	\hphantom{-}1 & \text{if $n \equiv 1 \pmod{3}$}, \\
	-1 & \text{if $n \equiv -1 \pmod{3}$}, \\
	\hphantom{-}0 & \text{if $n \equiv 0 \pmod{3}$}.
\end{cases}
\end{equation*}

\begin{Theorem} \label{thm:falsequintuple}
For $m \geq 1$ and $0 \leq a \leq m$, we have
\begin{gather}
		\sum_{n_m, \dots, n_1 \geq 0} \frac{(-1)^{n_m}q^{\binom{n_m + 1}{2} + \cdots + \binom{n_1+1}{2}}}{(-q)_{n_m}\bigl(q;q^2\bigr)_{n_1 + \delta_{a,0}}} \prod_{i=1}^{m-1} \begin{bmatrix} n_{i+1} + \delta_{a,i} \\ n_i \end{bmatrix}\nonumber \\
		\qquad{}=
		\begin{cases}
			\displaystyle \sum_{n \in \mathbb{Z}} {\rm sgn}(-n)(-1)^n\biggl(\frac{1-n}{3}\biggr) q^{\frac{3m+4}{3}\binom{n+1}{2} + (a-m-1)n} & \text{if $0 \leq a \leq m-1$}, \\
			\displaystyle \sum_{n \in \mathbb{Z}} {\rm sgn}(n)(-1)^n\biggl(\frac{1-n}{3}\biggr) q^{\frac{3m+4}{3}\binom{n+1}{2} - n} & \text{if $a=m$},
		\end{cases}
 \label{eqn:falsequintupleeq1}\\
		\sum_{n_m, \dots, n_1 \geq 0} \frac{(-1)^{n_m}q^{\binom{n_m + 1}{2} + \cdots + \binom{n_1+1}{2} + (n_1+\delta_{a,0})^2 - (n_1 + \delta_{a,0})}}{(-q)_{n_m}\bigl(q;q^2\bigr)_{n_1 + \delta_{a,0}}} \prod_{i=1}^{m-1} \begin{bmatrix} n_{i+1} + \delta_{a,i} \\ n_i \end{bmatrix} \nonumber \\
		\qquad{}=
		\begin{cases}
			\displaystyle \sum_{n \in \mathbb{Z}} {\rm sgn}(-n)(-1)^n\biggl(\frac{1-n}{3}\biggr) q^{\frac{3m+2}{3}\binom{n+1}{2} + (a-m-1)n} & \text{if $0 \leq a \leq m-1$}, \vspace{1mm}\\
			\displaystyle \sum_{n \in \mathbb{Z}} {\rm sgn}(n)(-1)^n\biggl(\frac{1-n}{3}\biggr) q^{\frac{3m+2}{3}\binom{n+1}{2} - n} & \text{if $a=m$},
		\end{cases}
\label{eqn:falsequintupleeq2} \\
		\sum_{n_m, \dots, n_1 \geq 0} \frac{(-1)^{n_m}q^{\binom{n_m + 1}{2} + \cdots + \binom{n_1+1}{2}}\bigl(-1;q^3\bigr)_{n_1+\delta_{a,0}}}{(-q)_{n_m}\bigl(q;q^2\bigr)_{n_1 + \delta_{a,0}}(-1;q)_{n_1 + \delta_{a,0}}} \prod_{i=1}^{m-1} \begin{bmatrix} n_{i+1} + \delta_{a,i} \\ n_i \end{bmatrix} \nonumber\\
		\qquad{}=
		\begin{cases}
			\displaystyle \sum_{n \in \mathbb{Z}} {\rm sgn}(-n)(-1)^n\biggl(\frac{1-n}{3}\biggr) q^{(m+1)\binom{n+1}{2} + (a-m-1)n} & \text{if $0 \leq a \leq m-1$}, \\
			\displaystyle \sum_{n \in \mathbb{Z}} {\rm sgn}(n)(-1)^n\biggl(\frac{1-n}{3}\biggr) q^{(m+1)\binom{n+1}{2} - n} & \text{if $a=m$}.
		\end{cases} \nonumber
\end{gather}

\end{Theorem}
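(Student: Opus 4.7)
The plan is to apply Theorem~\ref{thm:false} three times, using exactly the same trio of Bailey pairs relative to $q$ that produced Theorem~\ref{thm:quintuple}. In each case the sequence $f_n$ has the form \eqref{eqn:fnquint} with $t\in\{4,2,3\}$, which satisfies \eqref{eqn:fcond} as already observed, and the corresponding $\beta_n$ are \eqref{eqn:quintbeta1}, \eqref{eqn:quintbeta2}, and \eqref{eqn:quintbeta3}. Substituting these $\beta_n$ into the left-hand side of \eqref{eqn:falseid} immediately yields the left-hand sides of the three identities in Theorem~\ref{thm:falsequintuple}.

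For the right-hand sides, I would first observe that the three-fold sign pattern of $f_n$ in \eqref{eqn:fnquint}---namely $+1$ when $n\equiv 0$, $0$ when $n\equiv 1$, and $-1$ when $n\equiv 2$ modulo $3$---coincides precisely with the Legendre symbol $\bigl(\frac{1-n}{3}\bigr)$. This lets me write
\begin{equation*}
f_n = \Bigl(\frac{1-n}{3}\Bigr) q^{\frac{t}{3}\binom{n+1}{2}-n}
\end{equation*}
uniformly in $n\in\ZZ$. Plugging this into the right-hand side of \eqref{eqn:falseid} and using the identity $m\binom{n}{2}+\frac{t}{3}\binom{n+1}{2} = \frac{3m+t}{3}\binom{n+1}{2}-mn$, the exponent in the $0\leq a\leq m-1$ case collapses to $\frac{3m+t}{3}\binom{n+1}{2}+(a-m-1)n$. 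The $a=m$ case is analogous and uses $m\binom{n+1}{2}+\frac{t}{3}\binom{n+1}{2}=\frac{3m+t}{3}\binom{n+1}{2}$. Specializing $t$ to $4$, $2$, and $3$ then produces the three right-hand sides displayed in Theorem~\ref{thm:falsequintuple}.

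Since every ingredient is already in place---the three Bailey pairs were verified during the proof of Theorem~\ref{thm:quintuple}, Theorem~\ref{thm:false} is available, and \eqref{eqn:fnquint} satisfies \eqref{eqn:fcond}---I do not anticipate a genuine obstacle. The only routine bookkeeping is the exponent manipulation and the sanity check that the sign pattern of $f_n$ indeed matches $\bigl(\frac{1-n}{3}\bigr)$ on each residue class modulo~$3$.
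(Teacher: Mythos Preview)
Your proposal is correct and matches the paper's intended proof exactly: the paper states that the false theta companions are obtained by inserting the same Bailey pairs used in Theorem~\ref{thm:quintuple} into Theorem~\ref{thm:false} rather than Theorem~\ref{thm:main}, and your exponent bookkeeping and Legendre-symbol identification of the sign pattern of $f_n$ in \eqref{eqn:fnquint} are the only details to fill in.
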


We close this section by noting that a number of the bases cases of Theorems \ref{thm:falseq^2}--\ref{thm:falsequintuple} appear in classical work of Rogers \cite{Ro1}. For example, the base cases corresponding to $m=1$ of Theorem~\ref{thm:falseq^2} are found at \cite[p.~334, equation~(7)]{Ro1}. The case $m=1$ and $a=0$ of Theorem~\ref{thm:falsestrangesign} is found at \cite[p.~333, equation~(5)]{Ro1}. Finally, the two cases of \eqref{eqn:falsequintupleeq1} and \eqref{eqn:falsequintupleeq2} corresponding to~${m=1}$ are at \cite[p.~332, equation~(1)]{Ro1} and \cite[p.~332, equation~(2)]{Ro1}, respectively.

\section{Proof of Theorem \ref{thm:dilation} and applications}
\label{sec:dilated}

We begin this section with the following Bailey lemma.

\begin{Lemma} \label{lemma:dilationpair}
Let $m \geq 1$ and $0 \leq a \leq m$. Suppose that $(\alpha_n,\beta_n)$ is a Bailey pair relative to $1$ and that there is a sequence $(g_n)_{n \geq 0}$ with $g_0 = 1$ and
\begin{equation*}
	\alpha_n =
	\begin{cases}
		1 &\text{if $n= 0$}, \\
		(1+q^n)g_n &\text{if $n > 1$}.
	\end{cases}
\end{equation*}
Then $(\alpha_n',\beta_n')$ is a Bailey pair relative to $1$, where
\begin{gather} \label{eqn:dilationpairalpha}
	\alpha_n' =
	\begin{cases}
		\dfrac{1+q^{\frac{a}{2}}g_1}{1-q^{\frac{1}{2}}} &\text{if $n = 0$}, \vspace{1mm}\\
		q^{\frac{m}{2}n^2+n}\left( \dfrac{\bigl(g_n + q^{n(a-1)+ \frac{a}{2}}g_{n+1}\bigr)}{1-q^{n+\frac{1}{2}}} - \dfrac{q^{-\frac{1}{2}}\bigl(g_n + q^{-n(a-1) + \frac{a-2}{2}}g_{n-1}\bigr)}{1-q^{n - \frac{1}{2}}} \right) &\text{if $n > 0$},
	\end{cases}\!\!\!\!
\end{gather}
and
\begin{align}
		\beta_n' &{}=\beta_{n_{m+1}} \label{eqn:dilationpairbeta}\\
		&{}=
		\frac{1}{\bigl(-q^{\frac{1}{2}},q\bigr)_{n_{m+1}}}
		\sum_{n_{m},\dots,n_1\geq 0}
		q^{\frac{1}{2}(n_{m}^2+n_{m-1}^2+\cdots+n_1^2)}\bigl(-q^{\frac{1}{2}},q\bigr)_{n_1 + \delta_{a,0}}\beta_{n_1 + \delta_{a,0}}
		 \prod_{i=1}^{m} \begin{bmatrix} n_{i+1} + \delta_{a,i} \\ n_i \end{bmatrix}.\nonumber
\end{align}
\end{Lemma}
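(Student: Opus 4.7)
The proof should follow the same overall template as that of Lemma \ref{lemma:mainpair}, with the passage from $q^{\binom{n+1}{2}}$ weights to $q^{n^2/2}$ weights on the $\beta$-side reflected in different parameter choices in Bailey's lemma and the Bailey lattice. My plan for the generic case $1\leq a\leq m-1$ is to start from the given Bailey pair $(\alpha_n,\beta_n)$ relative to $1$ and iterate Bailey's lemma $a-1$ times with the choices $x\mapsto 1$, $b\mapsto -q^{1/2}$, and $c\to\infty$. A direct calculation using Lemma \ref{Baileylemma} and the $c\to\infty$ limit shows that each such step multiplies $\alpha_n$ by $q^{n^2/2}$ and, on the $\beta$-side, inserts one Gaussian weight $q^{n_i^2/2}$ and one denominator $(q)_{n_{i+1}-n_i}$, while producing the factor $(-q^{1/2})_{n_1}$ at the innermost level. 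This builds up the first $a-1$ unshifted $q$-binomial coefficients in \eqref{eqn:dilationpairbeta}.

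Next I would perform a shift block analogous to Steps 2--4 of Lemma \ref{lemma:mainpair}: apply Lemma \ref{littlelemma} (with $x\mapsto 1$) to move to a Bailey pair relative to $q$, then apply Bailey's lemma again with $x\mapsto q$, $b\mapsto -q^{1/2}$, $c\to\infty$, and finally apply the Bailey lattice (Lemma \ref{Baileylatticelemma}) with $x\mapsto q$ and appropriate $b,c$ to return to a Bailey pair relative to $1$. The net effect on the $\beta$-side is the replacement of $(q)_{n_{a+1}-n_a}$ by $(q)_{n_{a+1}+1-n_a}$, which is precisely what is needed to form the shifted binomial $\begin{bmatrix} n_{a+1}+1 \\ n_a \end{bmatrix}$. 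On the $\alpha$-side, the lattice contributes $(1-xq^{2n})$-type denominators which, combined with the $q^{1/2}$-offsets accumulated in the block, should produce the factors $1-q^{n\pm 1/2}$ seen in \eqref{eqn:dilationpairalpha}. I would then iterate Bailey's lemma $m-a$ more times with the original parameter choice to produce the remaining binomials, multiply numerator and denominator by $(q)_{n_1}\cdots(q)_{n_m}$ to rewrite the sum in terms of $q$-binomial coefficients and to combine $(-q^{1/2})_{n_1}$ with $(q)_{n_1}$ into $(-q^{1/2},q)_{n_1}$, and simplify the $\alpha$-side using the symmetry \eqref{eqn:gcond} to collapse the various $g_{\pm n}$ terms into the combinations $g_n+q^{n(a-1)+a/2}g_{n+1}$ and $g_n+q^{-n(a-1)+(a-2)/2}g_{n-1}$. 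The boundary cases $a=0$ and $a=m$ are handled exactly as in Lemma \ref{lemma:mainpair}: for $a=0$ the shift block is applied first, producing $\beta_{n_1+1}$ and $(-q^{1/2},q)_{n_1+1}$ (accounting for $\delta_{a,0}$), while for $a=m$ the shift block is applied last, producing the outer shifted binomial $\begin{bmatrix} n_{m+1}+1 \\ n_m \end{bmatrix}$.

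The main obstacle I expect is the bookkeeping on the $\alpha$-side through the shift block: the lattice formula involves a difference of $\alpha_n$ and $\alpha_{n-1}$ terms multiplied by various powers of $q$, and one must apply \eqref{eqn:gcond} several times to consolidate the resulting $g$-expressions into the precise form \eqref{eqn:dilationpairalpha}; in particular, pinning down the exponents $n(a-1)+a/2$ and $-n(a-1)+(a-2)/2$ requires a careful tracking of powers of $q$ at every transformation step. A secondary subtlety is the $n=0$ boundary: applying Lemma \ref{littlelemma} at $x=1$ produces a formally $0/0$ factor $q^n(1-xq^n)/(1-xq^{2n})$ at $n=0$, which must be interpreted as a limit (mirroring the role of the convention \eqref{eqn:latticeextracond} in Lemma \ref{lemma:mainpair}) in order to recover the correct value of $\alpha_0'$ given in the first line of \eqref{eqn:dilationpairalpha}.
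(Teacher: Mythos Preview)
Your overall template is right and matches the paper's proof: iterate the Bailey lemma with $x=1$, $b=-q^{1/2}$, $c\to\infty$; insert a ``shift block'' (Lemma~\ref{littlelemma}, then Bailey's lemma at $x=q$, then the lattice) to produce the single shifted binomial; then iterate again.  However, several details are off, and one case has a genuine gap.

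First, the count is off by one.  You should iterate $a$ times before the block, not $a-1$.  After $a$ iterations the outermost denominator factor is $(q)_{n_{a+1}-n_a}$, and Lemma~\ref{littlelemma} then shifts \emph{that} factor, so the shifted binomial lands at position $i=a$ as required by \eqref{eqn:dilationpairbeta}.  With your count the shift would sit at position $i=a-1$.  Correspondingly, the post-block iteration count is $m-a-1$, not $m-a$.

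Second, inside the block at $x=q$ the correct choice is $b=-q^{3/2}$, not $b=-q^{1/2}$.  With $b=-q^{3/2}$ the factor $(b)_j=(-q^{3/2})_j$ cancels against the $(-q^{1/2})_{j+1}$ coming from Lemma~\ref{littlelemma}, leaving only a harmless $(1+q^{1/2})^{-1}$ to be absorbed; with your choice these Pochhammers do not telescope and you pick up an unwanted $q^j$ on the weight.  For the lattice step you need $b=c=q^{1/2}$ so that $x/bc=1$ and $\beta$ is literally preserved while the $\alpha$-side acquires the $1-q^{n\pm 1/2}$ denominators.

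Third, the boundary case $a=m$ is \emph{not} handled as in Lemma~\ref{lemma:mainpair}.  There, $a=m$ means no shift at all (the product runs only to $m-1$); here the product runs to $m$ and the shift really is at $i=m$.  After Steps~1--4 the outer index is $n_{m+2}$, one too many.  The paper resolves this with a genuinely new move: apply Lemma~\ref{Baileylemma} once more with $x=1$, $b=-q^{1/2}$, and $c\to 0$, and then sum the resulting extra variable $n_{m+2}$ using the $q$-binomial theorem $\sum_k (-1)^{n-k}q^{\binom{n-k}{2}}/((q)_{n-k}(q)_{k-j})=\delta_{n,j}$, which collapses the index back to $n_{m+1}$.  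Your proposal does not anticipate this step.

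Finally, do not invoke \eqref{eqn:gcond} here: it is not a hypothesis of this lemma (only of Theorem~\ref{thm:dilation}), and the paper's $\alpha$-side simplification is purely algebraic, using only that $\alpha_n=(1+q^n)g_n$ so that $(1-q^{n+1})\alpha_{n+1}/(1-q^{2n+2})=g_{n+1}$, etc.
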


\begin{proof}
As in the two previous sections, we break the proof into several steps.

{\it Step 1\emph{:}} We first assume that $a>0$, postponing the case $a=0$ case until later (see Step 6).
Iterating Lemma \ref{Baileylemma} $a$ times with $x\mapsto 1$, \smash{$b\mapsto -q^{\frac{1}{2}}$}, and $c\rightarrow \infty$, we obtain the following pair relative to $1$:
\begin{align}
	\alpha_n =
	\begin{cases}
		1 & \text{if $n=0$}, \\
		q^{\frac{a}{2}n^2}(1+q^n)g_n & \text{if $n>0$},
	\end{cases}
	\label{eqn:alphathm1.6step1}
\end{align}
and
\begin{align}
	\beta_n =\beta_{n_{a+1}}
	=\dfrac{1}{\bigl(-q^{\frac{1}{2}}\bigr)_{n_{a+1}}}\sum_{n_{a},\dots,n_1\geq 0}
	\dfrac{q^{\frac{1}{2}(n_{a}^2+n_{a-1}^2+\cdots+n_1^2)}\bigl(-q^{\frac{1}{2}}\bigr)_{n_1}}{(q)_{n_{a+1}-n_{a}}
		(q)_{n_{a}-n_{a-1}}
		\cdots
		(q)_{n_{2}-n_{1}}} \beta_{n_1}.
	\label{eqn:betathm1.6step1}
\end{align}

{\it Step 2\emph{:}}
Now we apply Lemma \ref{littlelemma} with $x \mapsto 1$ to obtain a Bailey pair relative to $q$:
\begin{align*}
	\alpha_{n} &{}=
	\begin{cases}
		\dfrac{1 + q^{\frac{a}{2}}g_1}{1-q} 	 & \text{if $n=0$}, \\
		\dfrac{q^{\frac{a}{2}n^2 + n}}{1-q}\bigl(g_n + q^{n(a-1) + \frac{a}{2}}g_{n+1} \bigr) & \text{if $n > 0$}
	\end{cases}
	\\
	&{}=
	\dfrac{q^{\frac{a}{2}n^2 + n}}{1-q}\bigl(g_n + q^{n(a-1) + \frac{a}{2}}g_{n+1} \bigr),
\end{align*}
and
\begin{align*}
	\beta_n =\beta_{n_{a+1}}
	=\dfrac{\bigl(1-q^{n_{a+1}+1}\bigr)}{\bigl(-q^{\frac{1}{2}}\bigr)_{n_{a+1}+1}}\sum_{n_{a},\dots,n_1\geq 0}
	\dfrac{q^{\frac{1}{2}(n_{a}^2+n_{a-1}^2+\cdots+n_1^2)}\bigl(-q^{\frac{1}{2}}\bigr)_{n_1}}{(q)_{n_{a+1}+1-n_{a}}
		(q)_{n_{a}-n_{a-1}}
		\cdots
		(q)_{n_{2}-n_{1}}} \beta_{n_1}.
\end{align*}

{\it Step 3\emph{:}}
Next we apply Lemma \ref{Baileylemma} with $x\mapsto q$, \smash{$b\mapsto -q^{\frac{3}{2}}$}, and $c\rightarrow \infty$, and then cancel a factor of \smash{$\bigl(1+q^{\frac{1}{2}}\bigr)^{-1}$} in the resulting $\alpha_n$ and $\beta_n$ to obtain the following Bailey pair relative to~${x=q}$:
\begin{align*}
	\alpha_{n} =
	\dfrac{q^{\frac{a+1}{2}n^2 + n}}{1-q}\bigl(g_n + q^{n(a-1) + \frac{a}{2}}g_{n+1} \bigr)\bigl(1+q^{n+ \frac{1}{2}}\bigr)
\end{align*}
and
\begin{align*}
	\beta_n =\beta_{n_{a+2}}
	=
	\frac{1}{ \bigl(-q^{\frac{1}{2}}\bigr)_{n_{a+2}}}
	\sum_{n_{a+1},\dots,n_1\geq 0}
	\dfrac{\bigl(1-q^{n_{a+1}+1}\bigr)q^{\frac{1}{2}(n_{a+1}^2+n_{a}^2+\cdots+n_1^2)}\bigl(-q^{\frac{1}{2}}\bigr)_{n_1}}{(q)_{n_{a+2}-n_{a+1}}(q)_{n_{a+1}+1-n_{a}}
		(q)_{n_{a}-n_{a-1}}
		\cdots
		(q)_{n_{2}-n_{1}}} \beta_{n_1}.
\end{align*}

{\it Step 4\emph{:}}	In the next step, we retain the $\beta_n$, but change $\alpha_n$ so that the new Bailey pair is relative to $x=1$, by using Lemma \ref{Baileylatticelemma} with $x\mapsto q$ and $b,c\mapsto q^{\frac{1}{2}}$. Keeping in mind \eqref{eqn:latticeextracond} for the case $n=0$, we have
\begin{align*}
	\alpha_{n} =
	\begin{cases}
		\dfrac{1+q^{\frac{a}{2}}g_1}{1-q^{\frac{1}{2}}} & \text{if $n=0$}\\	
		q^{\frac{a+1}{2}n^2+n}\left( \dfrac{\bigl(g_n + q^{n(a-1)+ \frac{a}{2}}g_{n+1}\bigr)}{1-q^{n+\frac{1}{2}}} - \dfrac{q^{-\frac{1}{2}}\bigl(g_n + q^{-n(a-1) + \frac{a-2}{2}}g_{n-1}\bigr)}{1-q^{n - \frac{1}{2}}} \right) & \text{if $n>0$},
	\end{cases}
\end{align*}
and
\begin{align*}
	\beta_n =\beta_{n_{a+2}}
	=
	\frac{1}{ \bigl(-q^{\frac{1}{2}}\bigr)_{n_{a+2}}}
	\sum_{n_{a+1},\dots,n_1\geq 0}
	\dfrac{\bigl(1-q^{n_{a+1}+1}\bigr)q^{\frac{1}{2}(n_{a+1}^2+n_{a}^2+\cdots+n_1^2)}\bigl(-q^{\frac{1}{2}}\bigr)_{n_1}}{(q)_{n_{a+2}-n_{a+1}}(q)_{n_{a+1}-n_{a}+1}
		(q)_{n_{a}-n_{a-1}}
		\cdots
		(q)_{n_{2}-n_{1}}} \beta_{n_1}.
\end{align*}

{\it Step 5\emph{:}}
If $a\neq m$, we use Lemma \ref{Baileylemma} $m-a-1$ times with $x\mapsto 1$, \smash{$b\mapsto -q^{\frac{1}{2}}$} and $c\rightarrow \infty$. Converting the $\beta_n$ to $q$-binomial notation then gives the required Bailey pair relative to $x=1$.

If $a=m$, we use Lemma \ref{Baileylemma} with $x\mapsto 1$, \smash{$b\mapsto -q^{\frac{1}{2}}$}, and $c\rightarrow 0$
to obtain a new Bailey pair relative to $x=1$. This gives the correct $\alpha_n$ in \eqref{eqn:dilationpairalpha}, while
\begin{gather*}
	\beta_n =
	\dfrac{q^{-\frac{n^2}{2}}}{\bigl(-q^{\frac{1}{2}}\bigr)_{n}}
	\sum_{n_{m+2}=0}^n (-1)^{n-n_{m+2}}q^{\binom{n-n_{m+2}}{2}}\dfrac{\bigl(-q^{\frac{1}{2}}\bigr)_{n_{m+2}}}{(q)_{n-n_{m+2}}}\notag\\ \hphantom{\beta_n =}{}
	\times \frac{1}{ \bigl(-q^{\frac{1}{2}}\bigr)_{n_{m+2}}}
	\sum_{n_{m+1},\dots,n_1\geq 0}
	\dfrac{\bigl(1-q^{n_{m+1}+1}\bigr)q^{\frac{1}{2}(n_{m+1}^2+n_{m}^2+\cdots+n_1^2)}\bigl(-q^{\frac{1}{2}}\bigr)_{n_1}}{(q)_{n_{m+2}-n_{m+1}}(q)_{n_{m+1}-n_{m}+1}
		(q)_{n_{m}-n_{m-1}}
		\cdots
		(q)_{n_{2}-n_{1}}} \beta_{n_1}.
\end{gather*}
We now sum over $n_{m+2}$ first, using the $q$-binomial theorem \cite[equation~(3.3.6)]{AndBook}:
\begin{align*}
	\sum_{n_{m+2}=0}^n (-1)^{n-n_{m+2}}q^{\binom{n-n_{m+2}}{2}}
	\dfrac{1}{(q)_{n-n_{m+2}}(q)_{n_{m+2}-n_{m+1}}}=
	\delta_{n,n_{m+1}}.
\end{align*}
Thus, the $\beta_n$ simplifies to
\begin{align*}
	\beta_n &{}= \beta_{n_{m+1}}=
	\dfrac{1}{\bigl(-q^{\frac{1}{2}}\bigr)_{n_{m+1}}}
	\sum_{n_{m},\dots,n_1\geq 0}
	\dfrac{\bigl(1-q^{n_{m+1}+1}\bigr)q^{\frac{1}{2}(n_{m}^2+\cdots+n_1^2)}\bigl(-q^{\frac{1}{2}}\bigr)_{n_1}}{(q)_{n_{m+1}-n_{m}+1}
		(q)_{n_{m}-n_{m-1}}
		\cdots
		(q)_{n_{2}-n_{1}}} \beta_{n_1},
\end{align*}
and converting to $q$-binomial coefficients gives the required \eqref{eqn:dilationpairbeta}.

{\it Step 6\emph{:}}
If $a=0$, we skip Step 1 and use Steps 2--5. We obtain the Bailey pair with $\alpha_n$ given by \eqref{eqn:dilationpairalpha} for $a=0$ and
\begin{align*}
	\beta_n &{}=\beta_{n_{m+1}}
	=
	\frac{1}{ \bigl(-q^{\frac{1}{2}}\bigr)_{n_{m+1}}}
	\sum_{n_{m},\dots,n_1\geq 0}
	\dfrac{q^{\frac{1}{2}(n_{m}^2+\cdots+n_1^2)}\bigl(1-q^{n_1+1}\bigr)\bigl(-q^{\frac{1}{2}}\bigr)_{n_1+1}}
	{(q)_{n_{m+1}-n_{m}}
		\cdots
		(q)_{n_{2}-n_{1}}} \beta_{n_1+1}.
\end{align*}
Converting to $q$-binomial notation completes the proof.
\end{proof}

Armed with the Bailey pair in Lemma \ref{lemma:dilationpair}, we now prove Theorem \ref{thm:dilation}.

\begin{proof}[Proof of Theorem \ref{thm:dilation}]
Subtracting the case $a-1$ from the case $a$ of Lemma \ref{lemma:dilationpair}, we obtain a~new Bailey pair $(\alpha_n,\beta_n)$ relative to $1$. For $n=0$, we have
\begin{align*}
	\alpha_0 = \frac{1+q^{\frac{a}{2}}g_1}{1-q^{\frac{1}{2}}} - \frac{1+q^{\frac{a-1}{2}}g_1}{1-q^{\frac{1}{2}}} = -q^{\frac{a-1}{2}}g_1,
\end{align*}
and for $n > 0$ we have
\begin{align*}
	\alpha_n &{}= q^{\frac{m}{2}n^2+n}\left( \dfrac{\bigl(g_n + q^{n(a-1)+ \frac{a}{2}}g_{n+1}\bigr)}{1-q^{n+\frac{1}{2}}} - \dfrac{q^{-\frac{1}{2}}\bigl(g_n + q^{-n(a-1) + \frac{a-2}{2}}g_{n-1}\bigr)}{1-q^{n - \frac{1}{2}}} \right) \\
	&\quad{}- q^{\frac{m}{2}n^2+n}\left( \dfrac{\bigl(g_n + q^{n(a-2)+ \frac{a-1}{2}}g_{n+1}\bigr)}{1-q^{n+\frac{1}{2}}} - \dfrac{q^{-\frac{1}{2}}\bigl(g_n + q^{-n(a-2) + \frac{a-3}{2}}g_{n-1}\bigr)}{1-q^{n - \frac{1}{2}}} \right) \\
	&{}= -q^{\frac{m}{2}n^2 +n}\left(q^{n(a-2) + \frac{a-1}{2}}g_{n+1} + q^{-n(a-1) +\frac{a-3}{2}}g_{n-1}\right).
\end{align*}
Now using \eqref{eqn:dilationpairbeta} and \eqref{Baileylimit} along with the fact that
\begin{equation*}
	\lim_{n_{m+1} \to \infty} \begin{bmatrix} n_{m+1} + \delta_{a,m} \\ n_m \end{bmatrix} = \frac{1}{(q)_{n_m}},
\end{equation*}
we obtain
\begin{align*}
	\text{l.h.s. of \eqref{eqn:dilationid}} = \frac{\bigl(-q^{\frac{1}{2}}\bigr)_{\infty}}{(q)_{\infty}}\biggl(-q^{\frac{a-1}{2}}g_1 - \sum_{n \geq 1}q^{\frac{m}{2}n^2+n}\bigl(q^{n(a-2) + \frac{a-1}{2}}g_{n+1} + q^{-n(a-1) +\frac{a-3}{2}}g_{n-1}\bigr)\biggr).
\end{align*}
If $g_n$ satisfies \eqref{eqn:gcond}, then we compute
\begin{align*}
	\text{l.h.s. of \eqref{eqn:dilationid}}
	&{}= -q^{\frac{a-1}{2}}\frac{\bigl(-q^{\frac{1}{2}}\bigr)_{\infty}}{(q)_{\infty}} \biggl(\sum_{n \geq 0} q^{\frac{m}{2}n^2 +an -n}g_{n+1} + \sum_{n \geq 1} q^{\frac{m}{2}n^2 -an + 2n - 1}g_{n-1}\biggr) \\
	&{}= -q^{\frac{a-1}{2}}\frac{\bigl(-q^{\frac{1}{2}}\bigr)_{\infty}}{(q)_{\infty}} \biggl(\sum_{n \geq 0} q^{\frac{m}{2}n^2 +an -n}g_{n+1} + \sum_{n \leq -1} q^{\frac{m}{2}n^2 +an - 2n - 1}g_{-n-1} \biggr) \\
	&{}= -q^{\frac{a-1}{2}} \frac{\bigl(-q^{\frac{1}{2}}\bigr)_{\infty}}{(q)_{\infty}} \sum_{n \in \mathbb{Z}} q^{\frac{m}{2}n^2+na-n}g_{n+1},
\end{align*}
as desired.
\end{proof}

We now give several applications of Theorem \ref{thm:dilation}, beginning with Theorem \ref{thm:dilationex1}.

\begin{proof}[Proof of Theorem \ref{thm:dilationex1}]
We use the Bailey pair relative to $1$ \cite[p.~468]{Sl1},
\begin{align*}
	\alpha_n=
	\begin{cases}
		1 & \text{if $n=0$},\\
		(-1)^nq^{n^2-\frac{1}{2}n}(1+q^n) & \text{if $n>0$},
	\end{cases}
\qquad
\text{and}
\qquad
	\beta_n=\dfrac{1}{\bigl(-q^{\frac{1}{2}}\bigr)_n(q)_n},
\end{align*}
corresponding to $g_n = (-1)^nq^{n^2-\frac{n}{2}}$. Using this in Theorem \ref{thm:dilation} with $q=q^2$,
we obtain
\begin{align*}
	\mathscr{S}_{m,a} - \mathscr{S}_{m,a-1} &{}= \frac{\bigl(-q;q^2\bigr)_{\infty}}{\bigl(q^2;q^2\bigr)_{\infty}} q^{a}\sum_{n\in \mathbb{Z}}(-1)^nq^{(m+2)n^2-2na-n} \\
	&{}=q^{a}\frac{\bigl(-q;q^2\bigr)_{\infty}}{\bigl(q^2;q^2\bigr)_{\infty}}\cdot\bigl(q^{m+1-2a},q^{m+3+2a},q^{2m+4};q^{2m+4}\bigr)_\infty,
\end{align*}
by the triple product identity \eqref{eqn:jtp}.
\end{proof}

Next, for $m \geq 1$ and $0 \leq a \leq m$ define
\begin{equation*}
\mathscr{R}_{m,a} = \sum_{n_m, \dots, n_1 \geq 0} \frac{q^{n_m^2 + \cdots + n_1^2}\bigl(-q;q^2\bigr)_{n_1 + \delta_{a,0}}}{\bigl(q^2;q^2\bigr)_{n_m}} \prod_{i=1}^{m-1} \begin{bmatrix} n_{i+1} + \delta_{a,i} \\ n_i \end{bmatrix}_{q^2}.
\end{equation*}
\begin{Theorem}
For $1 \leq a \leq m$, we have
\begin{equation*}
	\mathscr{R}_{m,a} - \mathscr{R}_{m,a-1} = q^{a+1}\frac{\bigl(-q;q^2\bigr)_{\infty}}{\bigl(q^2;q^2\bigr)_{\infty}}\bigl(q^{m+6+2a},q^{m-2a},q^{2m+6};q^{2m+6}\bigr)_{\infty}.
\end{equation*}
\end{Theorem}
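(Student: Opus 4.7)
The plan is to apply Theorem \ref{thm:dilation} with $q \mapsto q^2$, using an appropriate Bailey pair relative to $1$. Comparing the sum in $\mathscr{R}_{m,a}$ with the left-hand side of \eqref{eqn:dilationid}, we note that after $q \mapsto q^2$ the factor $\bigl(-q^{1/2},q;q\bigr)_{n_1}$ becomes $\bigl(-q;q^2\bigr)_{n_1}\bigl(q^2;q^2\bigr)_{n_1}$. For this combined with $\beta_{n_1}$ to collapse to the desired $\bigl(-q;q^2\bigr)_{n_1 + \delta_{a,0}}$ appearing in $\mathscr{R}_{m,a}$, we require a Bailey pair relative to $1$ with $\beta_n = 1/(q)_n$.

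The appropriate Bailey pair is the classical one corresponding to Euler's pentagonal number theorem:
\[
\alpha_0 = 1, \qquad \alpha_n = (-1)^n q^{n(3n-1)/2}\bigl(1+q^n\bigr) \text{ for } n \geq 1, \qquad \beta_n = \frac{1}{(q)_n}.
\]
Its validity is an easy direct verification (or it may be found in Slater's list \cite{Sl1}). Setting $g_n = (-1)^n q^{n(3n-1)/2}$, we have $g_0 = 1$, and the relation $g_{-n} = q^n g_n$ holds by inspection, confirming \eqref{eqn:gcond}.

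Now apply Theorem \ref{thm:dilation} after $q \mapsto q^2$: the factors $\bigl(q^2;q^2\bigr)_{n_1+\delta_{a,\cdot}}$ coming from $\beta$ cancel those inside $\bigl(-q,q^2;q^2\bigr)_{n_1+\delta_{a,\cdot}}$, so the left-hand side of \eqref{eqn:dilationid} becomes exactly $\mathscr{R}_{m,a} - \mathscr{R}_{m,a-1}$. The right-hand side, upon substituting $g_{n+1}(q^2) = (-1)^{n+1}q^{3n^2+5n+2}$ and pulling out the overall factor of $q^2$, simplifies to
\[
q^{a+1}\frac{\bigl(-q;q^2\bigr)_\infty}{\bigl(q^2;q^2\bigr)_\infty}\sum_{n \in \mathbb{Z}}(-1)^n q^{(m+3)n^2 + (2a+3)n}.
\]
Applying the Jacobi triple product identity \eqref{eqn:jtp} with $q \mapsto q^{m+3}$ and $z \mapsto q^{2a+3}$ converts the theta series into $\bigl(q^{2m+6}, q^{m+6+2a}, q^{m-2a}; q^{2m+6}\bigr)_\infty$, which matches the desired product. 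The whole argument is a mechanical application of the machinery developed earlier; the only non-routine step is recognizing the correct Bailey pair, but the pattern set by Theorem \ref{thm:dilationex1} makes this immediate.
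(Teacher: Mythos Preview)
Your proof is correct and follows essentially the same approach as the paper's: both use the Euler pentagonal Bailey pair relative to $1$ with $\beta_n = 1/(q)_n$ and $g_n = (-1)^n q^{n(3n-1)/2}$, apply Theorem~\ref{thm:dilation}, dilate $q \mapsto q^2$, and finish with the Jacobi triple product. The computations on the right-hand side match exactly.
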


\begin{proof}
We use the Bailey pair relative to $1$ \cite[p.~468, B(1)]{Sl1},
\begin{align}
	\alpha_n=
	\begin{cases}
		1 & \text{if $n=0$},\\
		(-1)^nq^{n(3n-1)/2}(1+q^n) & \text{if $n>0$},
	\end{cases}
	\label{eqn:alphaR}
\end{align}
and
\begin{align}
	\beta_n=\dfrac{1}{(q)_n}.
	\label{eqn:betaR}
\end{align}
Applying Theorem \ref{thm:dilation} and then letting $q \mapsto q^2$ gives
\begin{align*}
	\mathscr{R}_{m,a} - \mathscr{R}_{m,a-1} &{}= -q^{a-1}\frac{\bigl(-q;q^2\bigr)_{\infty}}{\bigl(q^2;q^2\bigr)_{\infty}}\sum_{n \in \mathbb{Z}} q^{mn^2+2an-2n}(-1)^{n+1}q^{(n+1)(3n+2)} \\
	&{}= q^{a+1}\frac{\bigl(-q;q^2\bigr)_{\infty}}{\bigl(q^2;q^2\bigr)_{\infty}}\sum_{n \in \mathbb{Z}} q^{(m+3)n^2+(2a+3)n}(-1)^{n},
\end{align*}
and the result follows from the triple product identity \eqref{eqn:jtp}.
\end{proof}
Analogous to \eqref{eqn:Smm}, we have that $\mathscr{R}_{m,m}$ are products for $m\geq 1$.
\begin{Theorem}
For $m\geq 1$, we have
\begin{align*}	
	\mathscr{R}_{m,m}=\frac{\bigl(-q;q^2\bigr)_{\infty}}{\bigl(q^2;q^2\bigr)_{\infty}}\bigl(q^{m+4},q^{m+2},q^{2m+6};q^{2m+6}\bigr)_{\infty}.
\end{align*}
\end{Theorem}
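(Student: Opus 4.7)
The plan is to apply Lemma~\ref{lemma:dilationpair} directly (rather than in the differenced form of Theorem~\ref{thm:dilation}) with $a=m$ and the Bailey pair \eqref{eqn:alphaR}--\eqref{eqn:betaR}, for which $g_n=(-1)^nq^{n(3n-1)/2}$ (extended to all of $\ZZ$) satisfies $g_{-n}=q^ng_n$. This produces a new Bailey pair $(\alpha_n',\beta_n')$ relative to $1$. Passing to the limit $n_{m+1}\to\infty$ via \eqref{Baileylimit} and then mapping $q\mapsto q^2$, the $\beta_n'$-side identifies with $\mathscr{R}_{m,m}$ up to the prefactor $(-q;q^2)_\infty(q^2;q^2)_\infty$, so proving the theorem reduces to evaluating $\sum_{n\geq 0}\alpha_n'$ and recognising it as an infinite product.

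For the evaluation, I would exploit the structure of \eqref{eqn:dilationpairalpha} with $a=m$. Crucially, the exponent of the $g_{n+1}$ term in the first fraction collapses to $q^{(m/2)(n+1)^2}$, and $\alpha_0'$ coincides with the first fraction evaluated at $n=0$. Writing $\alpha_n'=A_n-B_n$ for $n\geq 1$ (with $A_n$ and $B_n$ the two fractions) and reindexing $\sum_{n\geq 1}B_n=\sum_{n\geq 0}B_{n+1}$, the key algebraic check is that the common denominator $1-q^{n+1/2}$ of $A_n$ and $B_{n+1}$ cancels to give
\begin{align*}
A_n-B_{n+1}=q^{(m/2)n^2+n}g_n+q^{(m/2)(n+1)^2}g_{n+1}.
\end{align*}
Summing over $n\geq 0$ and regrouping yields $\sum_{n\geq 0}\alpha_n'=g_0+\sum_{n\geq 1}q^{(m/2)n^2}(1+q^n)g_n$; invoking $g_{-n}=q^ng_n$ converts $(1+q^n)g_n=g_n+g_{-n}$, producing the bilateral sum $\sum_{n\in\ZZ}q^{(m/2)n^2}g_n$.

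Substituting $g_n=(-1)^nq^{n(3n-1)/2}$ and mapping $q\mapsto q^2$ gives $\sum_{n\in\ZZ}(-1)^nq^{(m+3)n^2-n}$, which \eqref{eqn:jtp} (with $q\mapsto q^{m+3}$ and $z\mapsto q^{-1}$) evaluates as $\bigl(q^{2m+6},q^{m+2},q^{m+4};q^{2m+6}\bigr)_\infty$. Combining with the prefactor furnishes the required right-hand side. The main obstacle is the telescoping identity for $A_n-B_{n+1}$: one must track three distinct $g$-indices ($g_{n-1}$, $g_n$, $g_{n+1}$) across the two fractions of \eqref{eqn:dilationpairalpha} and verify that the $g_n$-contributions from $A_n$ and $B_{n+1}$ assemble into a factor of $1-q^{n+1/2}$ that cancels the common denominator, and similarly for the $g_{n+1}$-contributions. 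Once this cancellation is in place, the remaining steps are routine bookkeeping.
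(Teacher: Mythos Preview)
Your proposal is correct, but it takes a longer route than the paper. The paper does not invoke the full Lemma~\ref{lemma:dilationpair} with $a=m$; instead it uses only the intermediate Bailey pair \eqref{eqn:alphathm1.6step1}--\eqref{eqn:betathm1.6step1} obtained at Step~1 of that lemma's proof (i.e., after iterating Lemma~\ref{Baileylemma} $m$ times with $b\mapsto -q^{1/2}$, $c\to\infty$). That pair already has the simple $\alpha_n=q^{(m/2)n^2}(1+q^n)g_n$ for $n>0$, so $\sum_{n\geq 0}\alpha_n=g_0+\sum_{n\geq 1}q^{(m/2)n^2}(1+q^n)g_n$ is immediately the bilateral sum $\sum_{n\in\ZZ}q^{(m/2)n^2}g_n$, with no telescoping needed. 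In the limit $n_{m+1}\to\infty$ the $\beta$-side of this Step~1 pair coincides with what you get from \eqref{eqn:dilationpairbeta} at $a=m$, so both routes produce the same identity.

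Your approach instead carries the pair through Steps~2--5 of Lemma~\ref{lemma:dilationpair}, which for $a=m$ passes through Lemmas~\ref{littlelemma} and~\ref{Baileylatticelemma} and produces the more intricate $\alpha_n'$ of \eqref{eqn:dilationpairalpha}. Your telescoping identity $A_n-B_{n+1}=q^{(m/2)n^2+n}g_n+q^{(m/2)(n+1)^2}g_{n+1}$ is valid and precisely undoes those extra steps, landing back on the Step~1 sum. So what your argument buys is a self-contained use of the lemma as stated, at the cost of an additional algebraic cancellation; what the paper's shortcut buys is avoiding that cancellation entirely by stopping at the point in the construction where the $\alpha$-side is already in its simplest form.
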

\begin{proof}
We start with the Bailey pair in \eqref{eqn:alphaR} and \eqref{eqn:betaR}, and then consider the Bailey pair given by \eqref{eqn:alphathm1.6step1} and \eqref{eqn:betathm1.6step1} with $a=m$. We then apply \eqref{Baileylimit}. Observing that
\begin{align*}
	\sum_{n\geq 0} \alpha_n &{}=1+ \sum_{n\geq 1} q^{\frac{m}{2}n^2}(1+q^n)(-1)^nq^{n(3n-1)/2}
	 =\sum_{n\in\ZZ}(-1)^n q^{\frac{m+3}{2}n^2-\frac{n}{2}} \\
	&{}=\bigl(q^{\frac{m}{2} +2},q^{\frac{m}{2} +1},q^{m+3};q^{m+3}\bigr)_\infty
\end{align*}
by \eqref{eqn:jtp}, the result follows after dilating by $q\mapsto q^2$.
\end{proof}

For our final application, for $m \geq 1$ and $0 \leq a \leq m$ define
\begin{equation*}
\mathscr{T}_{m,a} = \sum_{n_m, \dots, n_1 \geq 0} \frac{q^{n_m^2 + \cdots + n_1^2}\bigl(-1;q^2\bigr)_{n_1 + \delta_{a,0}}}{\bigl(q^2;q^2\bigr)_{n_m}\bigl(q,q^2\bigr)_{n_1 + \delta_{a,0}}} \prod_{i=1}^{m-1} \begin{bmatrix} n_{i+1} + \delta_{a,i} \\ n_i \end{bmatrix}_{q^2}.
\end{equation*}

\begin{Theorem}
For $1 \leq a \leq m$, we have
\begin{equation*}
	\mathscr{T}_{m,a} - \mathscr{T}_{m,a-1} = -q^{a-1}\frac{\bigl(-q;q^2\bigr)_{\infty}}{\bigl(q^2;q^2\bigr)_{\infty}}\bigl(-q^{m+2a},-q^{m+2-2a},q^{2m+2};q^{2m+2}\bigr)_{\infty}.
\end{equation*}
\end{Theorem}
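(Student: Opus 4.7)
The plan is to apply Theorem~\ref{thm:dilation} with a suitable Bailey pair relative to~$1$ and then dilate $q \mapsto q^2$, exactly as was done for Theorem~\ref{thm:dilationex1} and for the identity on $\mathscr{R}_{m,a}-\mathscr{R}_{m,a-1}$. The key is to identify a Bailey pair whose $\beta$-factor, when multiplied by $(-q^{\frac{1}{2}},q)_n$, collapses so that after dilation it reproduces the factor $(-1;q^2)_n/(q;q^2)_n$ appearing in $\mathscr{T}_{m,a}$.

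Concretely, I would take $(\alpha_n,\beta_n)$ relative to~$1$ defined by
\begin{equation*}
	\alpha_0 = 1, \qquad \alpha_n = (1+q^n)q^{\binom{n}{2}} \text{ for $n \geq 1$}, \qquad \beta_n = \frac{(-1;q)_n}{(q;q^2)_n(q;q)_n}.
\end{equation*}
This corresponds to $g_n = q^{\binom{n}{2}}$; extending by $\binom{n}{2} = n(n-1)/2$ for all $n \in \ZZ$, the conditions $g_0 = 1$ and $g_{-n} = q^n g_n$ follow from $n + \binom{n}{2} = \binom{n+1}{2} = \binom{-n}{2}$. The first task is to confirm that this is indeed a Bailey pair; the defining relation $\beta_n = \sum_{k=0}^n \alpha_k/((q)_{n-k}(q)_{n+k})$ can be verified by induction on $n$ or by a direct manipulation closely related to Gauss's identity $\sum_{n\geq 0} q^{\binom{n+1}{2}} = (q^2;q^2)_\infty/(q;q^2)_\infty$. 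Granting this, the elementary identity $(q^{\frac{1}{2}};q)_n(-q^{\frac{1}{2}};q)_n = (q;q^2)_n$ gives
\begin{equation*}
	\bigl(-q^{\frac{1}{2}},q\bigr)_n \beta_n = \bigl(-q^{\frac{1}{2}};q\bigr)_n (q;q)_n \cdot \frac{(-1;q)_n}{(q;q^2)_n(q;q)_n} = \frac{(-1;q)_n}{\bigl(q^{\frac{1}{2}};q\bigr)_n},
\end{equation*}
which upon $q \mapsto q^2$ becomes $(-1;q^2)_n/(q;q^2)_n$. Therefore the left-hand side of~\eqref{eqn:dilationid}, after dilation, is precisely $\mathscr{T}_{m,a}-\mathscr{T}_{m,a-1}$.

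On the right-hand side, substituting $g_{n+1} = q^{\binom{n+1}{2}}$ and collecting exponents yields
\begin{equation*}
	\sum_{n\in\ZZ} q^{\frac{m}{2}n^2 + (a-1)n + \binom{n+1}{2}} = \sum_{n\in\ZZ} q^{\frac{m+1}{2}n^2 + (a-\frac{1}{2})n}.
\end{equation*}
Applying the Jacobi triple product identity~\eqref{eqn:jtp} with base $q^{m+1}$ and $z = q^{a-\frac{1}{2}}$ evaluates this to $\bigl(q^{m+1},-q^{(m+2a)/2},-q^{(m+2-2a)/2};q^{m+1}\bigr)_\infty$. Dilating $q \mapsto q^2$ then converts the prefactor $\frac{(-q^{\frac{1}{2}})_\infty}{(q)_\infty}$ into $\frac{(-q;q^2)_\infty}{(q^2;q^2)_\infty}$ and the theta product into $\bigl(q^{2m+2},-q^{m+2a},-q^{m+2-2a};q^{2m+2}\bigr)_\infty$, matching the claimed right-hand side.

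The main obstacle is verifying the Bailey pair in the first step, since the corresponding $\beta_n$ does not match one of the standard pairs cited for the $\mathscr{S}$ and $\mathscr{R}$ cases quite as directly. Once it is in hand (either cited or checked by induction), the rest of the proof is the elementary simplification of the $\beta$-factor above together with one routine application of the Jacobi triple product.
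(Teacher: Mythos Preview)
Your approach is essentially identical to the paper's. In fact, the Bailey pair you propose \emph{is} the one the paper uses: since $(-q^{1/2};q)_n(q^{1/2};q)_n=(q;q^2)_n$, the paper's $\beta_n=(-1)_n/\bigl((-q^{1/2},q^{1/2},q)_n\bigr)$ is exactly your $(-1;q)_n/\bigl((q;q^2)_n(q)_n\bigr)$, and the paper simply cites it as a (corrected) entry on p.~468 of Slater's list rather than verifying it by hand. The remaining simplifications and the triple-product evaluation are carried out just as you describe.
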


\begin{proof}
In Theorem \ref{thm:dilation}, we argue as usual using the Bailey pair relative to $1$ \cite[p.~468, corrected]{Sl1},
\begin{align}
	\alpha_n=
	\begin{cases}
		1 & \text{if $n=0$},\\
		q^{\binom{n}{2}}(1+q^n) & \text{if $n>0$},
	\end{cases}
	\label{eqn:alphaT}
\end{align}
and
\begin{align}
	\beta_n=\dfrac{(-1)_n}{\bigl(-q^{\frac{1}{2}},q^{\frac{1}{2}},q\bigr)_n}.
	\label{eqn:betaT}
\end{align}
This concludes the proof.
\end{proof}

Again, the $\mathscr{T}_{m,m}$ are products for $m\geq 1$.
\begin{Theorem}
For $m\geq 1$, we have
\begin{align*}	\mathscr{T}_{m,m}=\frac{\bigl(-q;q^2\bigr)_{\infty}}{\bigl(q^2;q^2\bigr)_{\infty}}\bigl(-q^{m},-q^{m+2},q^{2m+2};q^{2m+2}\bigr)_{\infty}.
\end{align*}
\end{Theorem}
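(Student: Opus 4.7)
The plan is to follow the template used just above for $\mathscr{R}_{m,m}$, replacing the seed Bailey pair by the one in \eqref{eqn:alphaT}--\eqref{eqn:betaT}. For this pair, the relevant sequence is $g_n = q^{\binom{n}{2}}$, which satisfies $g_0 = 1$ together with \eqref{eqn:gcond}, since $g_{-n} = q^{\binom{-n}{2}} = q^{n(n+1)/2} = q^n g_n$.

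First I would carry out Step~1 of Lemma~\ref{lemma:dilationpair} with $a=m$, namely iterate Lemma~\ref{Baileylemma} $m$ times with $x \mapsto 1$, $b \mapsto -q^{\frac{1}{2}}$, and $c \to \infty$, yielding the Bailey pair relative to $1$ displayed in \eqref{eqn:alphathm1.6step1}--\eqref{eqn:betathm1.6step1}. Applying the Bailey limit \eqref{Baileylimit} to this pair, then rewriting the resulting $\beta$-side multisum in $q$-binomial notation and dilating $q \mapsto q^2$, identifies the sum-side with $\mathscr{T}_{m,m}$ (the shifts $\delta_{m,i}$ vanish for $0 \leq i \leq m-1$).

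For the product side, the key computation is
\begin{align*}
	\sum_{n \geq 0} \alpha_n = 1 + \sum_{n \geq 1} q^{\frac{m}{2}n^2}(1+q^n)q^{\binom{n}{2}} = \sum_{n \in \mathbb{Z}} q^{\frac{m+1}{2}n^2 - \frac{n}{2}},
\end{align*}
where the symmetrization to a bilateral sum uses \eqref{eqn:gcond} to fold $n \leq -1$ onto $n \geq 1$. Applying \eqref{eqn:jtp} with $q \mapsto q^{\frac{m+1}{2}}$ and $z \mapsto -q^{-\frac{1}{2}}$ (to absorb the absence of an alternating sign) evaluates this as $\bigl(q^{m+1}, -q^{\frac{m}{2}}, -q^{\frac{m+2}{2}}; q^{m+1}\bigr)_{\infty}$. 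Multiplying by the prefactor $(-q^{\frac{1}{2}})_{\infty}/(q)_{\infty}$ from \eqref{Baileylimit} and dilating $q \mapsto q^2$ produces the claimed product $\frac{(-q;q^2)_\infty}{(q^2;q^2)_\infty}\bigl(-q^m, -q^{m+2}, q^{2m+2}; q^{2m+2}\bigr)_{\infty}$.

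No serious obstacle is anticipated: the structure mirrors the $\mathscr{R}_{m,m}$ proof almost verbatim, and the only minor wrinkle is that the bilateral sum lacks an alternating sign, which is precisely why the triple product is invoked with $z \mapsto -q^{-\frac{1}{2}}$ rather than $z \mapsto q^{-\frac{1}{2}}$.
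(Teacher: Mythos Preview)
Your proposal is correct and follows essentially the same argument as the paper: start from the Bailey pair \eqref{eqn:alphaT}--\eqref{eqn:betaT}, apply Step~1 of Lemma~\ref{lemma:dilationpair} with $a=m$, take the Bailey limit, symmetrize the $\alpha$-sum to a bilateral series, and evaluate via \eqref{eqn:jtp} before dilating $q\mapsto q^2$. The only cosmetic difference is that the paper writes the triple product factors as $\bigl(-q^{\frac{m}{2}},-q^{\frac{m}{2}+1},q^{m+1};q^{m+1}\bigr)_\infty$ rather than your equivalent ordering.
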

\begin{proof}
We start with the Bailey pair in \eqref{eqn:alphaT} and \eqref{eqn:betaT}, and then consider the Bailey pair given by \eqref{eqn:alphathm1.6step1} and \eqref{eqn:betathm1.6step1} with $a=m$. We then apply \eqref{Baileylimit}. Observing that
\begin{align*}
	\sum_{n\geq 0} \alpha_n &{}=1+ \sum_{n\geq 1} q^{\frac{m}{2}n^2}(1+q^n)q^{\frac{n(n-1)}{2}}
	 =\sum_{n\in\ZZ}q^{\frac{m+1}{2}n^2-\frac{n}{2}}
	 =\bigl(-q^{\frac{m}{2}} ,-q^{\frac{m}{2}+1},q^{m+1};q^{m+1}\bigr)_\infty
\end{align*}
by \eqref{eqn:jtp} gives the result after dilating by $q\mapsto q^2$.
\end{proof}

\section{Concluding remarks}
In this paper, we have shown how the families of $q$-series identities involving products of $q$-binomial coefficients like the one in \eqref{eqn:shiftedbinom} fit naturally into the theory of Bailey pairs. As noted in the introduction, this is not a surprise, given that several similar identities have already appeared in the literature in relation to Bailey pairs. For example, Hikami's variant of the Andrews--Gordon identities \cite{Hi1},
\begin{equation*}
\sum_{n_{m-1}, \dots, n_1 \geq 0} \frac{q^{n_1^2 + \cdots+ n_{m-1}^2 + n_{a+1} + \cdots + n_{m-1}}}{(q)_{n_{m-1}}} \prod_{i=1}^{m-2} \begin{bmatrix} n_{i+1} + \delta_{a,i} \\ n_i \end{bmatrix} = \frac{\bigl(q^{a+1},q^{2m-a},q^{2m+1};q^{2m+1}\bigr)_{\infty}}{(q)_{\infty}},
\end{equation*}
valid for $m \geq 2$ and $0 \leq a \leq m-1$, can be proved using the theory of Bailey pairs, as can several other similar families of infinite product and false theta identities, \cite{Hi1,Lo1,Lo-Sa1}.

Identities involving products of $q$-binomial coefficients similar to \eqref{eqn:shiftedbinom} also arose in connection with Schur's indices of certain $4d$ $N=2$ Argyres--Douglas theories in \cite{KMR}. For $t\geq 1$, $1\leq s\leq t+1$ define (with the convention that $n_{t+1}=0$):
\begin{align*}
\mathscr{D}_{t,s}=
\sum_{n_1,\dots, n_t\geq 0}
q^{n_s}\prod_{r=1}^{t}\frac{q^{n_r n_{r+1} + n_r} }{(q)_{n_r}^2}.
\end{align*}
Then, for $k\geq 1$, $0\leq i\leq k$, it was proved in \cite[Theorems~6.1 and~7.1]{KMR} that
\begin{align}
&(-1)^{k-i}\mathscr{D}_{2k,k+i+1}
+2\sum_{j=i+1}^{k}(-1)^{k-j}\mathscr{D}_{2k,k+j+1}
= \dfrac{\bigl(q^{k-i+1},q^{k+i+2},q^{2k+3}; q^{2k+3}\bigr)_{\infty}}{(q)_\infty^{2k+1}},\label{kmr:ag}\\
&(-1)^{k-i}\mathscr{D}_{2k-1,k+i}
+2\sum_{j=i+1}^{k}(-1)^{k-j}\mathscr{D}_{2k-1,k+j}
= \dfrac{1}{(q)_{\infty}^{2k}}\sum_{n\in\mathbb{Z}}\mathrm{sgn}(n)q^{(k+1)n^2+in}
\label{kmr:false}.
\end{align}
The first step in the proof of these identities involves using quantum dilogarithm to deduce
\begin{gather}
\mathscr{D}_{t,s}
=\dfrac{1}{(q)_{\infty}^t}
\sum_{m_1,\dots,m_{t-1}\geq 0}
(-1)^{\sum_{j=2}^{t-1}m_j}q^{\binom{m_1+1}{2}+ \sum_{j=1}^{t-1}\binom{m_j+1}{2}}
\dfrac{(q)_{m_{t-1} + \delta_{t,s}}}{(q)_{m_1}^2}
\prod_{i=1}^{t-2}
\begin{bmatrix}
	m_{i} + \delta_{i,s-1} \\
	m_{i+1}
\end{bmatrix}\!\!\!\!
\label{kmr:ag'}
\end{gather}
for $t\geq 2$, $2\leq s\leq t+1$. Here we have taken the liberty to rewrite the expressions in \cite[Propo\-si\-tion~4.2]{KMR} to more transparently exhibit the $q$-binomial coefficients of the shape \eqref{eqn:shiftedbinom}.
Crucially, the next step in the proof of \eqref{kmr:ag} and \eqref{kmr:false} uses the theory of Bailey pairs to go from~\eqref{kmr:ag'} to the theta and false theta counterparts, as appropriate \cite[Section~5]{KMR}.
Note the slight change in the order of the variables:
The summation variables in Theorem \ref{CLSXYthm} are ordered subject to $n_{i+1}+\delta_{a,i}\geq n_i$, however in \eqref{kmr:ag'}, they are ordered $m_i+\delta_{i,s-1}\geq m_{i+1}$. We omit the discussion of two further families of identities involving $w$-deformed $\mathscr{D}$ functions proved in \cite{KMR}.

\subsection*{Acknowledgements}

We thank A.J.\ Yee for her talk at the International Conference on Modular Forms and $q$-Series at University of Cologne in March 2024. We also thank her for facilitating this collaboration. SK gratefully acknowledges the support of Simons Foundations' Travel Support for Mathematicians \#636937 and MPS-TSM-00007726.


\begin{thebibliography}{99}
\footnotesize\itemsep=0pt

\bibitem{AABLattice}
Agarwal A.K., Andrews G.E., Bressoud D.M., The {B}ailey lattice,
 \textit{J.~Indian Math. Soc. (N.S.)} \textbf{51} (1987), 57--73.

\bibitem{AndKummer}
Andrews G.E., On the {$q$}-analog of {K}ummer's theorem and applications,
 \href{https://doi.org/10.1215/S0012-7094-73-04045-3}{\textit{Duke Math.~J.}} \textbf{40} (1973), 525--528.

\bibitem{Andmultiple}
Andrews G.E., Multiple series {R}ogers--{R}amanujan type identities,
 \href{https://doi.org/10.2140/pjm.1984.114.267}{\textit{Pacific~J.~Math.}} \textbf{114} (1984), 267--283.

\bibitem{AndBook}
Andrews G.E., The theory of partitions, \textit{Encyclopedia Math. Appl.}, \href{https://doi.org/10.1017/CBO9780511608650}{Cambridge
 University Press}, Cambridge, 1994.

\bibitem{AndParity}
Andrews G.E., Parity in partition identities, \href{https://doi.org/10.1007/s11139-008-9150-0}{\textit{Ramanujan~J.}} \textbf{23}
 (2010), 45--90.

\bibitem{And1}
Andrews G.E., Dyson's ``favorite'' identity and {C}hebyshev polynomials of the
 third and fourth kind, \href{https://doi.org/10.1007/s00026-019-00443-w}{\textit{Ann. Comb.}} \textbf{23} (2019), 443--464.

\bibitem{BIS}
Bressoud D., Ismail M.E.H., Stanton D., Change of base in {B}ailey pairs,
 \href{https://doi.org/10.1023/A:1009824218230}{\textit{Ramanujan~J.}} \textbf{4} (2000), 435--453, \href{https://arxiv.org/abs/math.CO/9909053}{arXiv:math.CO/9909053}.

\bibitem{CLSXY}
Chern S., Li Z., Stanton D., Xue T., Yee A.J., The {A}riki--{K}oike algebras
 and {R}ogers--{R}amanujan type partitions, \href{https://doi.org/10.1007/s10801-024-01340-z}{\textit{J.~Algebraic Combin.}}
 \textbf{60} (2024), 491--540, \href{https://arxiv.org/abs/2209.07713}{arXiv:2209.07713}.

\bibitem{C-quint}
Cooper S., The quintuple product identity, \href{https://doi.org/10.1142/S1793042106000401}{\textit{Int.~J.~Number Theory}}
 \textbf{2} (2006), 115--161.

\bibitem{GR}
Gasper G., Rahman M., Basic hypergeometric series, 2nd ed., \textit{Encyclopedia Math.
 Appl.}, Vol.~96, \href{https://doi.org/10.1017/CBO9780511526251}{Cambridge University Press}, Cambridge, 2004.

\bibitem{Hi1}
Hikami K., {$q$}-series and {$L$}-functions related to half-derivatives of the
 {A}ndrews--{G}ordon identity, \href{https://doi.org/10.1007/s11139-006-6506-1}{\textit{Ramanujan~J.}} \textbf{11} (2006),
 175--197, \href{https://arxiv.org/abs/math.NT/0303250}{arXiv:math.NT/0303250}.

\bibitem{KMR}
Kanade S., Milas A., Russell M.C., On certain identities involving {N}ahm-type
 sums with double poles, \href{https://doi.org/10.1016/j.aam.2022.102452}{\textit{Adv. in Appl. Math.}} \textbf{143} (2023),
 102452, 34~pages, \href{https://arxiv.org/abs/2109.01909}{arXiv:2109.01909}.

\bibitem{KR}
Kanade S., Russell M.C., On {$q$}-series for principal characters of standard
 {$A^{(2)}_2$}-modules, \href{https://doi.org/10.1016/j.aim.2022.108282}{\textit{Adv. Math.}} \textbf{400} (2022), 108282,
 24~pages, \href{https://arxiv.org/abs/2010.01008}{arXiv:2010.01008}.

\bibitem{KimYee}
Kim S., Yee A.J., The {R}ogers--{R}amanujan--{G}ordon identities, the
 generalized {G}\"ollnitz--{G}ordon identities, and parity questions,
 \href{https://doi.org/10.1016/j.jcta.2013.02.005}{\textit{J.~Combin. Theory Ser.~A}} \textbf{120} (2013), 1038--1056.

\bibitem{Lo0}
Lovejoy J., A {B}ailey lattice, \href{https://doi.org/10.1090/S0002-9939-03-07247-2}{\textit{Proc. Amer. Math. Soc.}} \textbf{132}
 (2004), 1507--1516.

\bibitem{Lo1}
Lovejoy J., Bailey pairs and strange identities, \href{https://doi.org/10.4134/JKMS.j220167}{\textit{J.~Korean Math. Soc.}}
 \textbf{59} (2022), 1015--1045, \href{https://arxiv.org/abs/2203.14391}{arXiv:2203.14391}.

\bibitem{Lo-Sa1}
Lovejoy J., Sarma R., Bailey pairs, radial limits of $q$-hypergeomtric false
 theta functions, and a conjecture of {H}ikami, \textit{Kyushu~J.~Math.} {t}o
 appear, \href{https://arxiv.org/abs/2402.11529}{arXiv:2402.11529}.

\bibitem{McLS}
McLaughlin J., Sills A.V., {R}amanujan--{S}later type identities related to the
 moduli 18 and 24, \href{https://doi.org/10.1016/j.jmaa.2008.03.033}{\textit{J.~Math. Anal. Appl.}} \textbf{344} (2008),
 765--777.

\bibitem{Ro1}
Rogers L.J., On two theorems of combinatory analysis and some allied
 identities, \href{https://doi.org/10.1112/plms/s2-16.1.315}{\textit{Proc. London Math. Soc.}} \textbf{16} (1917), 315--336.

\bibitem{Sl1}
Slater L.J., A new proof of {R}ogers's transformations of infinite series,
 \href{https://doi.org/10.1112/plms/s2-53.6.460}{\textit{Proc. London Math. Soc.}} \textbf{53} (1951), 460--475.

\bibitem{Sl2}
Slater L.J., Further identities of the {R}ogers--{R}amanujan type,
 \href{https://doi.org/10.1112/plms/s2-54.2.147}{\textit{Proc. London Math. Soc.}} \textbf{54} (1952), 147--167.

\bibitem{St}
Stembridge J.R., Hall--{L}ittlewood functions, plane partitions, and the
 {R}ogers--{R}amanujan identities, \href{https://doi.org/10.2307/2001250}{\textit{Trans. Amer. Math. Soc.}}
 \textbf{319} (1990), 469--498.

\bibitem{Wa1}
Warnaar S.O., Partial theta functions.~{I}. {B}eyond the lost notebook,
 \href{https://doi.org/10.1112/S002461150201403X}{\textit{Proc. London Math. Soc.}} \textbf{87} (2003), 363--395.

\end{thebibliography}

\pdfbookmark[1]{References}{ref}
\LastPageEnding

\end{document}